



\documentclass[11pt,twoside,a4paper]{amsart}

\usepackage{amssymb}
\usepackage{vmargin}
\usepackage{amscd}
\usepackage{stmaryrd}
\usepackage{mathrsfs}
\usepackage[all]{xy}
\usepackage{xr}
\usepackage{hyperref}


\setmargins{32mm}{20mm}{14.6cm}{22cm}{1cm}{1cm}{1cm}{1cm}

 \setlength{\unitlength}{0.8cm}

\newcommand\la{\leftarrow}
\newcommand\lra{\longrightarrow}

\newcommand\ten{\otimes}

\newcommand\eps{\epsilon}

\renewcommand\H{\mathrm{H}}
\newcommand\z{\mathrm{Z}}

\newcommand\Z{\mathbb{Z}}
\newcommand\Q{\mathbb{Q}}

\newcommand\bD{\mathbb{D}}

\newcommand\bG{\mathbb{G}}

\newcommand\bL{\mathbb{L}}

\newcommand\C{\mathcal{C}}

\newcommand\cA{\mathcal{A}}

\newcommand\cP{\mathcal{P}}

\renewcommand\O{\mathscr{O}}

\newcommand\fX{\mathfrak{X}}

\newcommand\g{\mathfrak{g}}

\newcommand\cHom{\mathcal{H}\!\mathit{om}}

\newcommand\Alg{\mathrm{Alg}}
\newcommand\CAlg{\mathrm{CAlg}}

\newcommand\Mod{\mathrm{Mod}}

\newcommand\Hom{\mathrm{Hom}}

\newcommand\map{\mathrm{map}}

\newcommand\HHom{\underline{\mathrm{Hom}}}

\newcommand\cone{\mathrm{cone}}
\newcommand\cocone{\mathrm{cocone}}

\newcommand\ob{\mathrm{ob}}

\newcommand\Co{\mathrm{Co}}
\newcommand\CoS{\mathrm{CoS}}

\newcommand\Spec{\mathrm{Spec}\,}

\newcommand\Set{\mathrm{Set}}

\newcommand\Aff{\mathrm{Aff}}

\newcommand\Sp{\mathrm{Sp}}
\newcommand\PreSp{\mathrm{PreSp}}

\newcommand\Pol{\mathrm{Pol}}

\newcommand\Com{\mathrm{Com}}
\newcommand\Comp{\mathrm{Comp}}

\newcommand\nondeg{\mathrm{nondeg}}

\newcommand\Lim{\varprojlim}
\newcommand\LLim{\varinjlim}
\DeclareMathOperator*{\holim}{holim}
\newcommand\ho{\mathrm{ho}\!}
\newcommand\into{\hookrightarrow}
\newcommand\onto{\twoheadrightarrow}

\newcommand\xra{\xrightarrow}

\newcommand\pr{\mathrm{pr}}

\newcommand\bt{\bullet}
\newcommand\by{\times}

\newcommand\mc{\mathrm{MC}}
\newcommand\mmc{\underline{\mathrm{MC}}}

\newcommand\Symm{\mathrm{Symm}}

\newcommand\et{\acute{\mathrm{e}}\mathrm{t}}

\newcommand\cart{\mathrm{cart}}

\newcommand\Tot{\mathrm{Tot}\,}
\newcommand\diag{\mathrm{diag}\,}

\newcommand\pd{\partial}

\newcommand\half{\frac{1}{2}}



\newcommand\gr{\mathrm{gr}}

\newcommand\dR{\mathrm{dR}}
\newcommand\DR{\mathrm{DR}}

\newcommand\op{\mathrm{opp}}

\newcommand\co{\colon\thinspace}

\newcommand\oR{\mathbf{R}}

\newcommand\oL{\mathbf{L}}

\newcommand\uleft\underleftarrow
\newcommand\uline\underline
\newcommand\uright\underrightarrow

\newcommand{\tps}{\texorpdfstring}
\newtheorem{theorem}{Theorem}[section]
\newtheorem{proposition}[theorem]{Proposition}
\newtheorem{corollary}[theorem]{Corollary}

\newtheorem{lemma}[theorem]{Lemma}
\newtheorem*{theorem*}{Theorem}
\newtheorem*{proposition*}{Proposition}
\newtheorem*{corollary*}{Corollary}
\newtheorem*{lemma*}{Lemma}
\newtheorem*{conjecture*}{Conjecture}

\theoremstyle{definition}
\newtheorem{definition}[theorem]{Definition}
\newtheorem*{definition*}{Definition}

\newtheorem*{notation*}{Notation}

\theoremstyle{remark}
\newtheorem{example}[theorem]{Example}
\newtheorem{examples}[theorem]{Examples}
\newtheorem{remark}[theorem]{Remark}

\newtheorem{properties}[theorem]{Properties}

\newtheorem*{example*}{Example}
\newtheorem*{examples*}{Examples}
\newtheorem*{remark*}{Remark}
\newtheorem*{remarks*}{Remarks}
\newtheorem*{exercise*}{Exercise}
\newtheorem*{property*}{Property}
\newtheorem*{properties*}{Properties}

\externaldocument[ddt1-]{ddt1}
\externaldocument[stacks2-]{stacks2}
\externaldocument[stacksintro-]{stacksintro}
\sloppy
\begin{document}

\begin{abstract}
 We show that on a derived Artin $N$-stack, there is a canonical equivalence between the spaces of  $n$-shifted  symplectic structures and non-degenerate $n$-shifted Poisson structures.
\end{abstract}

\title{Shifted Poisson and symplectic structures on derived $N$-stacks}
\author{J.P.Pridham}
\thanks{This work was supported by  the Engineering and Physical Sciences Research Council [grant number EP/I004130/2].}

\maketitle

\section*{Introduction}

In \cite{PTVV}, the notion of an $n$-shifted symplectic structure for a derived Artin stack was introduced. The definition of an $n$-Poisson structure was also sketched, and an approach suggested to prove  equivalence  of $n$-shifted symplectic structures and  non-degenerate $n$-Poisson structures, by first  establishing deformation quantisation for symplectic structures.
 The Darboux theorems of \cite{BBBJdarboux,BouazizGrojnowski} imply the existence of shifted  Poisson structures locally on derived Deligne--Mumford stacks, but not globally.

We prove  existence of shifted  Poisson structures by a direct approach, not involving deformation quantisation  or Darboux theorems. 
The key new notion is that of compatibility between   an  $n$-shifted pre-symplectic (i.e. closed, possibly degenerate) structure $\omega$ and an $n$-Poisson structure $\pi$. In the case of unshifted structures on underived manifolds, compatibility simply says that the associated maps between tangent and cotangent bundles satisfy 
\[
 \pi^{\sharp} \circ \omega^{\sharp} \circ \pi^{\sharp}=\pi^{\sharp},
\]
which ensures that $\omega^{\sharp}=(\pi^{\sharp})^{-1}$ whenever $\pi$ is non-degenerate. We demonstrate that  this notion admits a natural generalisation incorporating shifts and higher coherence data. We expect that this notion of compatibility between Poisson and pre-symplectic structures is the same as  in Definition 1.4.14 of the contemporaneous treatment \cite{CPTVV}, whose proof also does not involve deformation quantisation. Our   notion of compatibility is extended in  \cite{DQvanish,DQnonneg,DQLag} from Poisson structures to quantisations. 
 
Our  first main observation  is that there is a canonical global section $\sigma$ of the tangent space $T\cP(X,n)$ of the space $\cP(X,n)$ of $n$-Poisson structures, given by differentiating the $\bG_m$-action on the differential graded Lie algebra of polyvectors. For an unshifted Poisson structure on a smooth underived scheme, this just maps $\pi$ to itself. In general, if we write $\pi = \sum_{r \ge 2} \pi_r$ with $\pi_r$ an $r$-vector, then
\[
 \sigma(\pi) = \sum_{r \ge 2} (r-1)\pi_r.
\]

For any  $n$-shifted  Poisson structure $\pi$, contraction gives  a multiplicative map $\mu(-,\pi)$ from de Rham to Poisson cohomology, so 
   any  $n$-shifted pre-symplectic structure $\omega$ defines another global section $\mu(\omega,-)$ of the tangent space of the space of Poisson structures. 
In the unshifted case on a smooth scheme, the associated map from the cotangent space to the tangent space is simply  given by
\[
 \mu(\omega, \pi)^{\sharp}= \pi^{\sharp} \circ \omega^{\sharp} \circ \pi^{\sharp}.
\]

We then say that a pair $(\omega, \pi)$ is compatible if $\mu(\omega,\pi) \simeq \sigma(\pi)$. More formally, the space of compatible pairs is the homotopy 
limit of the diagram 
\[
\xymatrix@1{ \PreSp(X,n) \by \cP(X,n) \ar@<0.5ex>[r]^-{\mu}  \ar@<-0.5ex>[r]_-{\sigma} & T\cP(X,n) \ar[r] & \cP(X,n)}.
\]
 In the unshifted case,  this amounts to seeking compatible pairs in the sense above. 

Poisson structures are functorial with respect to \'etale morphisms, so the notions above are readily defined for derived DM $N$-stacks. 
For derived  Artin $N$-stacks, the formulation of Poisson structures is more subtle:   we show that a derived Artin $N$-stack admits an \'etale cover by derived stacks coming from commutative bidifferential bigraded algebras, and put Poisson structures on them. These algebras perform the same role as the  formal affine derived stacks of \cite{CPTVV}, and we refer to them as stacky CDGAs.

We then show that: 
\begin{enumerate}
 \item If $\pi$ is non-degenerate, then every compatible pre-symplectic structure $\omega$ is also non-degenerate, hence symplectic, and the space of such structures is contractible.

\item If $\omega$ is symplectic, then the space of \emph{non-degenerate} compatible Poisson structures is contractible.
\end{enumerate}
Thus the spaces of $n$-shifted symplectic structures and non-degenerate $n$-Poisson structures are both weakly equivalent to the space of non-degenerate compatible pairs $(\omega, \pi)$. These results have recently been obtained in \cite{CPTVV}, using a very different method to formulate compatibility.

The structure of the paper is as follows.

Section \ref{affinesn} addresses the case of a single, fixed, derived affine scheme. The compatibility operator $\mu$ is introduced in Definition \ref{mudef}, and the key technical result is  Lemma \ref{keylemma}, which shows how $\mu$ interpolates between the de Rham differential and the Schouten--Nijenhuis bracket. \S \ref{towersn} shows how to realise the spaces of pre-symplectic structures, Poisson structures and compatible pairs as towers of homotopy fibres. This uses the obstruction theory associated to  
pro-nilpotent $L_{\infty}$-algebras, regarding the construction as a generalised deformation problem. The correspondence between symplectic and non-degenerate Poisson structures in the affine case is then given in  Corollary \ref{compatcor2}.

Section \ref{DMsn} then applies these results to derived stacks. 
To date, the only proof that the cotangent complex governs deformations of a  (derived)  algebraic stack (rather than just morphisms to it from a fixed object) uses simplicial resolutions by affine schemes (cf. \cite[Theorem 1.2]{aoki}, \cite[Theorem \ref{stacks2-deformstack}]{stacks2}). Since the construction of Poisson structures is  a kind of  generalised deformation problem, it is unsurprising that it can be   studied using such resolutions. For derived Deligne--Mumford $N$-stacks, we look at spaces of structures on the diagrams given by suitable simplicial resolutions, and show that they are independent of the resolution; the key being  \'etale functoriality. Theorem \ref{DMthm} then establishes an equivalence between the spaces of $n$-symplectic and non-degenerate $n$-Poisson structures. 

In section \ref{Artinsn},  these results are extended to derived Artin $N$-stacks. Given a simplicial resolution by derived affines, we can form an associated stacky CDGA, and indeed a simplicial resolution by stacky CDGAs (Corollary \ref{gooddescent}). The morphisms in this resolution resemble \'etale maps, allowing a suitably functorial generalisation of the results of \S \ref{affinesn} to stacky CDGAs.  The main result is then Theorem \ref{Artinthm}, which establishes an equivalence between the spaces of $n$-symplectic and non-degenerate $n$-Poisson structures on  derived Artin $N$-stacks. 

I would like to thank Victor Ginzburg for suggesting a simplification  to the proofs of Lemma \ref{keylemma} and Proposition \ref{compatP1}. I would also like to thank the anonymous referees for helpful suggestions and comments.

\tableofcontents

\section{Compatible structures on derived affines}\label{affinesn}

For the purposes of this section, we will fix a  graded-commutative algebra $R=R_{\bt}$ in chain complexes over $\Q$, and a cofibrant graded-commutative $R_{\bt}$-algebra $A=A_{\bt}$. We will denote the differential on $A$ by $\delta$. 


In particular, the cofibrancy condition holds whenever the underlying morphism $R_{\#} \to A_{\#}$ of graded commutative algebras is freely generated in non-negative chain degrees. It ensures that the module $\Omega^1_{A/R}$ of K\"ahler differentials is a model for the cotangent complex of $A$ over $R$. For the purposes of this section, this latter property is all we need, so we could relax the cofibrancy condition slightly to include morphisms which are ind-smooth rather than freely generated. 

We define $\Omega^p_{A/R}:= \Lambda_A^p \Omega^1_{A/R}$, and we  denote its differential (inherited from $A$) by $\delta$. There is also a de Rham cochain differential $\Omega^p_{A/R} \to \Omega^{p+1}_{A/R}$, which we denote by $d$.

For a chain (resp. cochain) complex $M$, we write $M_{[i]}$ (resp. $M^{[j]}$) for the complex $(M_{[i]})_m= M_{i+m}$ (resp. $(M^{[j]})^m = M^{j+m}$).
Given $A$-modules $M,N$ in chain complexes, we write $\HHom_A(M,N)$ for the cochain complex given by
\[
 \HHom_A(M,N)^i= \Hom_{A_{\#}}(M_{\#},N_{\#[-i]}),
\]
with differential $\delta f= \delta_N \circ f \pm f \circ \delta_M$,
where $V_{\#}$ denotes the graded vector space underlying a chain complex $V$.

\subsection{Shifted Poisson structures}\label{poisssn}

\subsubsection{Polyvector fields}\label{polsn}

\begin{definition}\label{poldef}
Define the cochain complex of $n$-shifted polyvector fields  on $A$ by
\[
 \widehat{\Pol}(A,n):= \HHom_A(\CoS_A((\Omega^1_{A/R})_{[-n-1]}),A), 
\]
with graded-commutative  multiplication following the usual conventions for symmetric powers.  [Here, $\CoS_A^p(M) =\Co\Symm^p_A(M)= (M^{\ten_A p})^{\Sigma_p}$ and $\CoS_A(M) = \bigoplus_{p \ge 0}\CoS_A^p(M)$.]

The Lie bracket on $\Hom_A(\Omega^1_{A/R},A)$ then extends to give a bracket (the Schouten--Nijenhuis bracket)
\[
[-,-] \co \widehat{\Pol}(A,n)\by \widehat{\Pol}(A,n)\to \widehat{\Pol}(A,n)^{[-1-n]},
\]
determined by the property that it is a bi-derivation with respect to the multiplication operation. 

Thus  $\widehat{\Pol}(A,n)$ has the natural structure of a $P_{n+2}$-algebra  (i.e. an $(n+1)$-shifted Poisson algebra), and in particular $\widehat{\Pol}(A,n)^{[n+1]}$ is a differential graded Lie algebra (DGLA) over $R$.

Note that the cochain differential $\delta$ on $\widehat{\Pol}(A,n)$ can be written as $[\delta,-]$, where $\delta \in \widehat{\Pol}(A,n)^{n+2}$ is the element defined by the derivation $\delta$ on $A$.
\end{definition}

Strictly speaking, $\widehat{\Pol}$ is the complex of multiderivations, as polyvectors are usually defined as symmetric powers of the tangent complex. The two definitions agree (modulo completion) whenever the tangent complex is perfect, and Definition \ref{poldef} is the more natural object when the definitions differ.

\begin{definition}\label{Fdef}
Define a decreasing filtration $F$ on  $\widehat{\Pol}(A,n)$ by 
\[
 F^i\widehat{\Pol}(A,n):= \HHom_A( \bigoplus_{j \ge i} \CoS_A^j((\Omega^1_{A/R})_{[-n-1]}),A);
\]
this has the properties that $\widehat{\Pol}(A,n)= \Lim_i \widehat{\Pol}(A,n)/F^i$, with $[F^i,F^j] \subset F^{i+j-1}$, $\delta F^i \subset F^i$, and $F^i F^j \subset F^{i+j}$.
\end{definition}

Observe that this filtration makes $F^2\widehat{\Pol}(A,n)^{[n+1]}$ into a pro-nilpotent DGLA.

\subsubsection{Poisson structures}

\begin{definition}\label{mcPLdef}
 Given a   DGLA $L$, define the the Maurer--Cartan set by 
\[
\mc(L):= \{\omega \in  L^{1}\ \,|\, \delta\omega + \half[\omega,\omega]=0 \in  \bigoplus_n L^{2}\}.
\]

Following \cite{hinstack}, define the Maurer--Cartan space $\mmc(L)$ (a simplicial set) of a nilpotent  DGLA $L$ by
\[
 \mmc(L)_n:= \mc(L\ten_{\Q} \Omega^{\bt}(\Delta^n)),
\]
where 
\[
\Omega^{\bt}(\Delta^n)=\Q[t_0, t_1, \ldots, t_n,\delta t_0, \delta t_1, \ldots, \delta t_n ]/(\sum t_i -1, \sum \delta t_i)
\]
is the commutative dg algebra of de Rham polynomial forms on the $n$-simplex, with the $t_i$ of degree $0$.
\end{definition}

\begin{definition}
Given an inverse system $L=\{L_{\alpha}\}_{\alpha}$ of nilpotent DGLAs, define
\[
 \mc(L):= \Lim_{\alpha} \mc(L_{\alpha}) \quad  \mmc(L):= \Lim_{\alpha} \mmc(L_{\alpha}).
\]
Note that  $\mc(L)= \mc(\Lim_{\alpha}L_{\alpha})$, but $\mmc(L)\ne \mmc(\Lim_{\alpha}L_{\alpha}) $. 
\end{definition}

\begin{definition}\label{poissdef}
Define an $R$-linear  $n$-shifted Poisson structure on $A$ to be an element of
\[
 \mc(F^2 \widehat{\Pol}(A,n)^{[n+1]}),
\]
 and the space $\cP(A,n)$ of $R$-linear  $n$-shifted Poisson structures on $A$ to be given by the simplicial 
set
\[
 \cP(A,n):= \mmc( \{F^2 \widehat{\Pol}(A,n)^{[n+1]}/F^{i+2}\}_i).
\]

Also write $\cP(A,n)/F^{i+2}:= \mmc(F^2 \widehat{\Pol}(A,n)^{[n+1]}/F^{i+2})$, so $\cP(A,n)= \Lim_i \cP(A,n)/F^{i+2}$.
\end{definition}


\begin{remark}
Observe that elements of $\cP_0(A,n)= \mc(F^2 \widehat{\Pol}(A,n)^{[n+1]})$ consist of infinite sums $\pi = \sum_{i \ge 2}\pi_i$ with 
\[
 \pi_i \co \CoS_A^i((\Omega^1_{A/R})_{[-n-1]}) \to A_{[-n-2]}
\]
 satisfying $\delta(\pi_i) + \half \sum_{j+k=i+1} [\pi_j,\pi_k]=0$. This is precisely the condition which ensures that $\pi$ defines an $L_{\infty}$-algebra structure on $A_{[-n]}$. This then makes $A$ into a $\hat{P}_{n+1}$-algebra in the sense of  \cite[Definition 2.9]{melaniPoisson}. Equivalently, this is an algebra for the operad $\Com \circ (L_{\infty}[-n])$ via a distributive law $  (L_{\infty}[-n])\circ\Com \to \Com \circ (L_{\infty}[-n])$.

 It is important to remember that  $ \cP(A,n)$ is just one explicit presentation of the right-derived functor of $\cP_0(A,n) $ for varying $R$.
\end{remark}

When we need to compare chain and cochain complexes, we  silently make use of the equivalence  $u$ from chain complexes to cochain complexes given by $(uV)^i := V_{-i}$. On suspensions, this has the effect that $u(V_{[n]}) = (uV)^{[-n]}$.

\begin{definition}
We say that an  $n$-shifted Poisson structure $\pi = \sum_{i \ge 2}\pi_i $ is non-degenerate if $\pi_2 \co \CoS_A^2((\Omega^1_{A/R})_{[-n-1]}) \to A_{[-n-2]}$ induces a quasi-isomorphism 
\[
\pi_2^{\sharp}\co  (\Omega^1_{A/R})_{[-n]} \to \HHom_A(\Omega^1_{A/R},A)
\]
and $\Omega^1_{A/R} $ is perfect. 

Define $\cP(A,n)^{\nondeg}\subset \cP(A,n)$ to consist of non-degenerate elements --- this is a union of path-components.
\end{definition}


\subsubsection{The canonical tangent vector $\sigma$}

The space $\cP(A,n) $ admits an action of $\bG_m(R_0)$, which is inherited from the scalar multiplication on $\widehat{\Pol}(A,n)$ in which $\Hom_A(\CoS_A^p((\Omega^1_{A/R})_{[-n-1]}),A)$ is given weight $p-1$. Differentiating this action gives us a global tangent vector on $\cP(A,n) $, as follows. Take $\eps$ to be a variable of degree $0$, with $\eps^2=0$.

\begin{definition}\label{Tpoissdef}
Define the tangent spaces
\begin{eqnarray*}
 T\cP(A,n)&:=& \mmc( \{F^2 \widehat{\Pol}(A,n)^{[n+1]}\ten_{\Q} \Q[\eps]/F^{i+2}\}_i)\\
T\cP(A,n)/F^{i+2}&:=& \mmc( F^2 \widehat{\Pol}(A,n)^{[n+1]}\ten_{\Q} \Q[\eps]/F^{i+2}).
\end{eqnarray*}
 \end{definition}
These are simplicial sets over $\cP(A,n)$ (resp. $\cP(A,n)/F^{i+2}$), fibred in simplicial abelian groups. 

\begin{definition}\label{cPdef}
Given $\pi \in  \cP_0(A,n)$, observe that $\delta+[\pi,-]$ defines a square-zero derivation on $\widehat{\Pol}(A,n) $, and denote the resulting complex by 
\[
 \widehat{\Pol}_{\pi}(A,n).
\]
The product and bracket on polyvectors make this a  $P_{n+2}$-algebra, and it inherits the filtration $F$. 

Given $\pi \in  \cP_0(A,n)/F^p$, we define $\widehat{\Pol}_{\pi}(A,n)/F^p$ similarly. This is a CDGA, and $ F^1\widehat{\Pol}_{\pi}(A,n)/F^p$ is a $P_{n+2}$-algebra, because $F^i\cdot F^j \subset F^{i+j}$ and $[F^i,F^j] \subset F^{i+j-1}$.
\end{definition}

Note that $\widehat{\Pol}_{\pi}(A,n)$ is just the natural $n$-shifted analogue of the complex computing Poisson cohomology.

The following is an instance of a  standard result on square-zero extensions of DGLAs:
\begin{lemma}
 The  fibre $T_{\pi}\cP(A,n)$ of $T\cP(A,n)$  over $\pi \in \cP(A,n)$ is canonically homotopy equivalent to the Dold--Kan denormalisation of the good truncation   $\tau^{\le 0} (F^2\widehat{\Pol}_{\pi}(A,n)^{[n+2]})$. In particular, its
 homotopy groups  are given by
\[
 \pi_iT_{\pi}\cP(A,n)= \H^{n+2-i}(F^2 \widehat{\Pol}(A,n), \delta +[\pi,-]).
\]
\end{lemma}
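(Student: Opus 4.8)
The plan is to recognise the map $T\cP(A,n)\to \cP(A,n)$ as the morphism of Maurer--Cartan spaces induced by the square-zero surjection $\Q[\eps]\to\Q$, $\eps\mapsto 0$, and to compute its fibre. Write $L$ for the pro-nilpotent DGLA $F^2\widehat{\Pol}(A,n)^{[n+1]}$, or more precisely for the tower $\{L/F^{i+2}\}_i$, so that $\cP(A,n)=\mmc(L)$ and, by Definition \ref{Tpoissdef}, $T\cP(A,n)=\mmc(L\ten_\Q\Q[\eps])$ with the projection to $\cP(A,n)$ induced by $\eps\mapsto 0$. Since $\eps$ has degree $0$ and $\eps^2=0$, the bracket $[a+b\eps,a'+b'\eps]=[a,a']+([a,b']+[b,a'])\eps$ exhibits $L\ten_\Q\Q[\eps]$ as a semidirect product $L\ltimes(L\eps)$, in which $L\eps\cong L$ is an abelian dg ideal carrying the adjoint action of $L$. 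Thus $\eps\mapsto 0$ is a levelwise surjection of pro-nilpotent DGLAs with abelian kernel $L\eps$, and there are no choices to be made, which is what will give canonicity.

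First I would invoke the standard fact that $\mmc$ sends such a surjection to a fibration of simplicial sets, compatibly with the inverse system; this is already recorded after Definition \ref{Tpoissdef}, where $T\cP(A,n)$ is noted to be fibred in simplicial abelian groups over $\cP(A,n)$. Its fibre over the vertex $\pi$ is then $\mmc$ of the kernel twisted by $\pi$: an element lying over the constant $\pi$ has the form $\pi+b\eps$, and the component of the Maurer--Cartan equation in $L\eps$ reads $\delta b+[\pi,b]=0$, the quadratic term $\tfrac12[b,b]$ vanishing because $\eps^2=0$. Hence the fibre is $\mmc$ of the \emph{abelian} DGLA given by the underlying complex of $L$ equipped with the differential $\delta+[\pi,-]$; in the notation of Definition \ref{cPdef} this complex is $F^2\widehat{\Pol}_\pi(A,n)^{[n+1]}$.

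It then remains to apply the standard identification, for an abelian DGLA (equivalently a cochain complex) $M$, of $\mmc(M)$ with the Dold--Kan denormalisation of $\tau^{\le 0}(M^{[1]})$; one checks directly that $\pi_0\mmc(M)=\H^1(M)$ and more generally $\pi_i\mmc(M)=\H^{1-i}(M)$, and this is the usual consequence of the acyclicity of $\Omega^\bt(\Delta^\bullet)$ in positive de Rham degrees together with the Dold--Kan correspondence. As this identification is natural in $M$, it survives passage to the inverse limit $\Lim_i$ over the tower $\{\,\cdot/F^{i+2}\}_i$, since good truncation and denormalisation both commute with these limits. Taking $M=F^2\widehat{\Pol}_\pi(A,n)^{[n+1]}$, so that $M^{[1]}=F^2\widehat{\Pol}_\pi(A,n)^{[n+2]}$, yields the asserted canonical equivalence, and the homotopy groups follow at once: for $i\ge 0$ the truncation is invisible, whence $\pi_iT_\pi\cP(A,n)=\H^{-i}(M^{[1]})=\H^{n+2-i}(F^2\widehat{\Pol}(A,n),\delta+[\pi,-])$. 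The only genuinely non-formal ingredient is this last abelian computation --- that the de Rham polynomial forms correctly model the denormalisation functor --- while the only bookkeeping demanding care is the compatibility of the fibration, the $\pi$-twisting and the two shifts with the pro-nilpotent inverse limit.
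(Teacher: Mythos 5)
Your argument is correct and is precisely the ``standard result on square-zero extensions of DGLAs'' that the paper invokes without writing out a proof: the square-zero surjection $\Q[\eps]\to\Q$ induces a fibration of Maurer--Cartan spaces whose strict fibre over $\pi$ is $\mmc$ of the abelian complex $(F^2\widehat{\Pol}(A,n)^{[n+1]},\delta+[\pi,-])$, which the usual abelian identification matches with the denormalised good truncation. Your care over the pro-nilpotent tower is also sound, since all the maps involved are fibrations (surjections of nilpotent DGLAs on one side, surjections in positive chain degrees on the Dold--Kan side), so the strict limits compute the homotopy limits and the levelwise equivalences pass to the limit.
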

The corresponding statements for $T_{\pi}\cP(A,n)/F^{i+2}$ also hold.

Now observe that the map
\begin{align*}
 \sigma \co F^2 \widehat{\Pol}(A,n)^{[n+1]} &\to F^2 \widehat{\Pol}(A,n)^{[n+1]}\ten_{\Q} \Q[\eps]\\
 \sum_{i \ge 2} \alpha_i  &\mapsto \sum_{i \ge 2} (\alpha_i+ (i-1)\alpha_i\eps)
\end{align*}
is a morphism of filtered DGLAs, for $\alpha_i \co \CoS_A^i(\Omega^1_{A/R}[n+1]) \to A$. This can be seen either by direct calculation or by observing that $\sigma$ is the differential of the $\bG_m$-action on $\Pol$.

\begin{definition}\label{sigmadef}
 Define the canonical tangent vector $\sigma \co \cP(A,n) \to  T\cP(A,n)$ on the space of $n$-shifted Poisson structures by applying $\mmc$ to the morphism $\sigma$ of DGLAs.
\end{definition}

Explicitly, this sends $\pi= \sum \pi_i $ to $\sigma(\pi)=\sum_{i \ge 2} (i-1)\pi_i \in T_{\pi}\cP(A,n)$, which thus has the property that $\delta \sigma(\pi) +[\pi, \sigma(\pi)]=0$.

The map $\sigma$ preserves the cofiltration in the sense that it comes from a system of  maps $\sigma \co  \cP(A,n)/F^{i+2} \to  T\cP(A,n)/F^{i+2} $.

\subsection{Shifted pre-symplectic structures}\label{prespsn}

\begin{definition}\label{DRdef}
Define the de Rham complex $\DR(A/R)$ to be the product total cochain complex of the double complex
\[
 A \xra{d} \Omega^1_{A/R} \xra{d} \Omega^2_{A/R}\xra{d} \ldots,
\]
so the total differential is $d \pm \delta$.

We define the Hodge filtration $F$ on  $\DR(A/R)$ by setting $F^p\DR(A/R) \subset \DR(A/R)$ to consist of terms $\Omega^i_{A/R}$ with $i \ge p$.
\end{definition}

Properties of the product total complex ensure that a map $f \co A \to B$ induces a quasi-isomorphism $\DR(A/R) \to \DR(B/R)$ whenever the maps $\Omega^p_{A/R} \to \Omega^p_{B/R}  $ are quasi-isomorphisms, which will happen whenever $f$ is a weak equivalence between cofibrant $R$-algebras.

The complex $\DR(A/R)$ has the natural structure of a commutative DG algebra over $R$, filtered in the sense that $F^iF^j \subset F^{i+j}$. 

\begin{definition}\label{presymplecticdef}
Define an $n$-shifted pre-symplectic structure $\omega$ on $A/R$ to be an element
\[
 \omega \in \z^{n+2}F^2\DR(A/R).
\]
\end{definition}

Explicitly, this means that $\omega$ is given by an infinite sum $\omega = \sum_{i \ge 2} \omega_i$, with $\omega_i \in (\Omega^i_{A/R})_{i-n-2}$ and $d\omega_i = \delta \omega_{i+1}$.

\begin{definition}
 Define an $n$-shifted symplectic structure $\omega$ on $A/R$ to be an $n$-shifted pre-symplectic structure $\omega$ for which the component $\omega_2 \in \z^n\Omega^2_{A/R}$ induces a quasi-isomorphism
\[
 \omega_2^{\sharp} \co \Hom_A(\Omega^1_{A/R},A) \to (\Omega^1_{A/R})_{[-n]}.
\]
and $\Omega^1_{A/R} $ is perfect as an $A$-module. 
\end{definition}

Now, we can regard $F^2\DR(A/R)^{[n+1]}$  as a filtered  DGLA with trivial bracket. This has the property that $\mc( F^2\DR(A/R)^{[n+1]})= \z^{n+2}F^2\DR(A/R)$.   We therefore make the following definition:

\begin{definition}\label{PreSpdef}
 Define the space of $n$-shifted pre-symplectic structures on $A/R$ to be the simplicial set
\[
 \PreSp(A,n):= \mmc( \{F^2\DR(A/R)^{[n+1]}/F^{i+2}\}_i) 
\]

Also write $\PreSp(A,n)/F^{i+2}:= \mmc(F^2  \DR(A/R)^{[n+1]}/F^{i+2})$, so $ \PreSp(A,n)= \Lim_i \PreSp(A,n)/F^{i+2}$.

Set $\Sp(A,n) \subset \PreSp(A,n)$ to consist of the symplectic structures --- this is a union of path-components.
\end{definition}
Note that  $\PreSp(A,n)/F^{i+2}$  is canonically weakly  equivalent to the Dold--Kan denormalisation of the complex $\tau^{\le 0}(F^2\DR(A)^{[n+2]}/F^{i+2})$ (and similarly for the limit  $ \PreSp(A,n)$),   but the description in terms of $\mmc$ will simplify comparisons.
 
\subsection{Compatible pairs}\label{compsn}

We will now develop the notion of compatibility between a pre-symplectic structure and a Poisson structure. Analogous results for the  unshifted, underived $\C^{\infty}$ context can be found in   \cite[Proposition 6.4]{KosmannSchwarzbachMagriPN}, and for the $\Z/2$-graded $\C^{\infty}$ context in \cite{KhudaverdianVoronov}.  

\begin{definition}\label{mudef}
Given  $\pi \in (F^2\widehat{\Pol}(A,n)/F^{p})^{n+2}$, define 
\[
 \mu(-,\pi) \co \DR(A/R)/F^{p} \to \widehat{\Pol}(A,n)/F^{p}
\]
to be the  morphism of graded $A$-algebras given on generators $ \Omega^1_{A/R}$ by 
\[
 \mu(a df, \pi):= \pi \lrcorner (a df)= a[\pi,f]. 
\]

Given $b \in (F^2\widehat{\Pol}(A,n)/F^{p})$, we then  define 
\[
 \nu(-, \pi, b) \co \DR(A/R)/F^{p} \to \widehat{\Pol}(A,n)/F^{p}
\]
by setting $\mu(\omega,\pi +b\eps)= \mu(\omega,\pi)+ \nu(-, \pi, b)\eps$ for $\eps^2=0$. More explicitly, $\nu(-, \pi, b)$ is  the $A$-linear derivation with respect to the ring homomorphism $ \mu(-,\pi) $ given on generators $ \Omega^1_{A/R}$ by 
\[
 \nu(a df, \pi, b):= b \lrcorner (a df)= a[b,f]. 
\]
\end{definition}

To see that these are well-defined in the sense that they descend to the quotients by $F^{p}$, observe that because $\pi \in F^2$, contraction has the property that
\[
 \mu(\Omega^1, \pi) \subset  F^1, \quad \nu(\Omega^1, \pi, b) \subset F^1,
\]
it follows that $\mu(F^p, \pi)\subset F^p$, $\nu(F^p,\pi,b)   \subset F^p$ by multiplicativity. 

Explicitly, when $\phi = a df_1 \wedge \ldots \wedge d f_p$, the operations are given by
\begin{align*}
\mu(\phi, \pi) &=a[\pi,f_1]\ldots [\pi,f_p,],\\
\nu(\phi, \pi, b)&= \sum_i \pm a[\pi,f_1]\ldots [b,f_i] \ldots [\pi,f_p].
\end{align*} 

\begin{lemma}\label{keylemma}
 For $\omega \in \DR(A)/F^p$ and $\pi \in F^2\widehat{\Pol}(A,n)^{n+2}/F^p$, we have
\begin{align*}
[\pi,\mu(\omega, \pi)] = \mu(d\omega, \pi) + \half \nu(\omega, \pi, [\pi,\pi]),\\
\delta_{\pi}\mu(\omega, \pi) = \mu(D\omega, \pi) + \nu(\omega, \pi,\kappa(\pi)),
\end{align*}
where $\delta_{\pi}= [\delta + \pi,-]$ is the differential on  $T_{\pi}\cP(A,n)/F^{p}$, with  $D= d \pm \delta$  the total differential on $(F^2\DR(A)/F^p) $ and $ \kappa(\pi)=[\delta, \pi] + \half [\pi,\pi]$.
\end{lemma}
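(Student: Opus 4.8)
The plan is to prove both identities by the standard principle that a derivation out of an algebra, valued in a module via a fixed ring map, is determined by its values on algebra generators. Here the relevant ring map is $\mu(-,\pi)\co \DR(A/R)/F^p \to \widehat{\Pol}(A,n)/F^p$, through which we view the target as a $\DR(A/R)/F^p$-module. Since $\DR(A/R)/F^p$ is generated as a graded-commutative algebra by $A$ in degree $0$ together with $\Omega^1_{A/R}=\{a\,df\}$, it will suffice to check each identity on $A$ and on generators $df$. All the operations preserve the filtration $F$ (as noted after Definition \ref{mudef}), so the whole computation may legitimately be carried out modulo $F^p$.

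First I would fix $\pi$ and observe that, in the first identity, each term is a derivation over $\mu(-,\pi)$: the assignment $\omega\mapsto[\pi,\mu(\omega,\pi)]$ is one because $[\pi,-]$ is a bi-derivation for the product on $\widehat{\Pol}$ (Definition \ref{poldef}) and $\mu(-,\pi)$ is multiplicative; the assignment $\omega\mapsto\mu(d\omega,\pi)$ is one because $d$ is a graded derivation on $\DR(A/R)$; and $\nu(-,\pi,[\pi,\pi])$ is one by construction. Hence both sides are derivations over $\mu(-,\pi)$, and I compare them on generators. On $a\in A$ both sides reduce to $[\pi,a]$, since an $A$-linear derivation annihilates $A$ (so $\nu(a,\pi,-)=0$) and $\mu(da,\pi)=[\pi,a]$. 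On a generator $df$ the left side is $[\pi,[\pi,f]]$, while $d(df)=0$ leaves only $\half\,\nu(df,\pi,[\pi,\pi])=\half[[\pi,\pi],f]$ on the right. These agree exactly by the graded Jacobi identity, using that $\pi$ has odd degree $1$ in the shifted DGLA $\widehat{\Pol}(A,n)^{[n+1]}$, whence $[[\pi,\pi],f]=2[\pi,[\pi,f]]$.

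For the second identity I would reduce it to the first. Writing $\delta_\pi=[\delta,-]+[\pi,-]$ and recalling that $[\delta,-]$ is the internal differential $\delta$ on $\widehat{\Pol}$ (Definition \ref{poldef}), the first identity rewrites $[\pi,\mu(\omega,\pi)]$ as $\mu(d\omega,\pi)+\half\,\nu(\omega,\pi,[\pi,\pi])$. Expanding $\mu(D\omega,\pi)=\mu(d\omega,\pi)\pm\mu(\delta\omega,\pi)$ and $\kappa(\pi)=[\delta,\pi]+\half[\pi,\pi]$, and using linearity of $\nu$ in its third slot, the claim collapses to the single ``internal Leibniz'' identity
\[
\delta\,\mu(\omega,\pi)=\pm\,\mu(\delta\omega,\pi)+\nu(\omega,\pi,[\delta,\pi]).
\]
This is once more an equality of derivations over $\mu(-,\pi)$, so I would check it on generators: on $A$ both sides give $\delta a$, and on $df$ it becomes $[\delta,[\pi,f]]=\pm[\pi,\delta f]+[[\delta,\pi],f]$, which is the graded Leibniz rule for $[\delta,-]$ combined with the (anti)commutation $\delta\,df=\pm d\,\delta f$ of the two differentials on $\DR(A/R)$ and the identity $[\delta,f]=\delta f$.

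The main obstacle will be sign bookkeeping rather than structure: one must pin down the Koszul signs in the bi-derivation rule for $[-,-]$, the parity of $\pi$ and of $\delta$ as elements of the shifted Lie algebra (this parity is what produces the crucial factor $2$ in the Jacobi step and what fixes the signs in $D=d\pm\delta$), and the commutation sign between $d$ and $\delta$ on the de Rham complex. A fully explicit alternative is available by substituting $\omega=a\,df_1\wedge\cdots\wedge df_p$ into the closed formulas of Definition \ref{mudef} and expanding $[\pi,-]$ by the Leibniz rule, but the derivation-on-generators argument is shorter and isolates the graded Jacobi identity as the genuine content.
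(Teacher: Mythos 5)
Your proposal is correct and follows essentially the same route as the paper: both arguments observe that every term is a derivation with respect to the ring map $\mu(-,\pi)$, reducing each identity to checks on the generators $a \in A$ and $df$, where the content is exactly the graded Jacobi identity $[[\pi,\pi],f]=2[\pi,[\pi,f]]$ (for $\pi$ odd in $\widehat{\Pol}(A,n)^{[n+1]}$), the $A$-linearity of $\nu$, and $d\,df=0$. Your reformulation of the second identity as ``first identity plus an internal Leibniz rule for $[\delta,-]$'' is merely a cosmetic reorganisation of the same generator computation that the paper carries out directly.
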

\begin{proof}
 For fixed $\pi$ and varying $\omega$, all expressions are derivations with respect to $\mu(-, \pi)$,
 so it suffices to check this expression in the cases $p=0$ and  $p=1$, $\omega = df$. In these cases, we have
\begin{align*}
[\pi, \mu(a,\pi)] = [\pi, a] &= \mu(da, \pi),\\
\delta_{\pi}\mu(a, \pi)  &= \mu(D a, \pi),\\
 [\pi,\mu(df, \pi)]= [\pi, [\pi,f]]&= \half \nu(df, \pi, [\pi,\pi]),\\
\delta_{\pi}\mu(df, \pi)= \delta_{\pi}[\pi,f]&= \nu(df, \pi, [\delta, \pi] + \half [\pi,\pi]).
\end{align*}
Because $\nu(a, \pi, [\pi,\pi])=0 $ ($\nu$ being $A$-linear) 
 and $ddf=0$, this gives the required results.
\end{proof}

In particular, this implies that when $\pi$ is Poisson, $\mu(-,\pi)$ defines a map from de Rham cohomology to Poisson cohomology.

\begin{lemma}\label{mulemma}
There  are  maps
\[
 (\pr_2 + \mu\eps) \co  \PreSp(A,n)/F^{p} \by \cP(A,n)/F^{p} \to T\cP(A,n)/F^{p}
\]
 over $\cP(A,n)/F^{p}$ for all $p$, compatible with each other.
In particular, we have
\[
 (\pr_2 + \mu\eps)\co  \PreSp(A,n) \by \cP(A,n) \to T\cP(A,n).
\]
\end{lemma}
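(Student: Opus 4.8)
The plan is to define the map \emph{simplicially and level-wise}, using that $\mmc$ is a functor on (filtered) nilpotent DGLAs and that the operations $\mu,\nu$ of Definition \ref{mudef} extend $\Omega^\bt(\Delta^m)$-(multi)linearly. Concretely, an $m$-simplex of the source $\PreSp(A,n)/F^p \by \cP(A,n)/F^p$ is a pair $(\omega,\pi)$ with $\omega \in \mc(F^2\DR(A/R)^{[n+1]}\ten_\Q \Omega^\bt(\Delta^m)/F^p)$ and $\pi \in \mc(F^2\widehat{\Pol}(A,n)^{[n+1]}\ten_\Q\Omega^\bt(\Delta^m)/F^p)$, while an $m$-simplex of the target $T\cP(A,n)/F^p$ is a Maurer--Cartan element of $F^2\widehat{\Pol}(A,n)^{[n+1]}\ten_\Q\Q[\eps]\ten_\Q\Omega^\bt(\Delta^m)/F^p$. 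I would send $(\omega,\pi)$ to $\pi + \mu(\omega,\pi)\eps$; by construction this lies over $\pi = \pr_2(\omega,\pi)$, which is why the map is written $(\pr_2 + \mu\eps)$. Since $\mu$ is natural in $\Omega^\bt(\Delta^\bt)$, on which the simplicial structure maps act, this assignment is automatically a morphism of simplicial sets over $\cP(A,n)/F^p$, and no higher coherence data is required.

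The one substantive point is that the image satisfies the Maurer--Cartan equation. Writing $\delta_{tot}=\delta\pm d_\Delta$ for the total differential on the tensored polyvector complex, with $d_\Delta$ the de Rham differential on $\Omega^\bt(\Delta^m)$, the $\eps^0$-component of the equation for $\pi+\mu(\omega,\pi)\eps$ is just the Maurer--Cartan equation for $\pi$, which holds by hypothesis, while the $\eps^1$-component reads $\delta_{tot}\mu(\omega,\pi)+[\pi,\mu(\omega,\pi)]=0$; that is, $\mu(\omega,\pi)$ must be closed for $\delta_\pi^{tot}:=\delta_{tot}+[\pi,-]$. This is where Lemma \ref{keylemma} enters. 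First I would record the Leibniz rule $d_\Delta\mu(\omega,\pi)=\mu(d_\Delta\omega,\pi)\pm\nu(\omega,\pi,d_\Delta\pi)$, which is immediate from the explicit formulae for $\mu$ and $\nu$, since $d_\Delta$ acts only on the simplicial factor and $\nu(-,\pi,-)$ is precisely the first-order variation of $\mu(-,\pi)$ in $\pi$. Adding this to the identity of Lemma \ref{keylemma} (extended $\Omega^\bt(\Delta^m)$-linearly, which is harmless as $d$ and $\delta$ are $\Omega^\bt(\Delta^m)$-linear) gives
\[
\delta_\pi^{tot}\mu(\omega,\pi) = \mu\bigl((D\pm d_\Delta)\omega,\pi\bigr) + \nu\bigl(\omega,\pi,\kappa(\pi)\pm d_\Delta\pi\bigr).
\]
Now $(D\pm d_\Delta)\omega=0$ because $\omega$ is a total de Rham cocycle, and $\kappa(\pi)\pm d_\Delta\pi = \delta_{tot}\pi+\half[\pi,\pi]$ is the Maurer--Cartan curvature of $\pi$, hence zero; so both terms vanish and $\mu(\omega,\pi)$ is $\delta_\pi^{tot}$-closed, as required.

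The remaining points are formal. Well-definedness modulo $F^p$ is exactly the observation recorded after Definition \ref{mudef}, namely $\mu(F^p,\pi)\subset F^p$ and $\nu(F^p,\pi,b)\subset F^p$, so $\mu$ descends to every quotient and commutes with the truncation maps $/F^{p}\to/F^{p'}$; this yields the asserted compatibility of the maps as $p$ varies. Passing to the inverse limit over $p$ and using $\PreSp(A,n)=\Lim_p\PreSp(A,n)/F^{p}$, $\cP(A,n)=\Lim_p\cP(A,n)/F^{p}$ and $T\cP(A,n)=\Lim_p T\cP(A,n)/F^{p}$ then produces the final displayed map.

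I expect the only real obstacle to be bookkeeping: extending Lemma \ref{keylemma} to the total differential incorporating the simplicial de Rham differential $d_\Delta$, and pinning down the Koszul signs both in the Leibniz rule for $\mu$ and in the interaction of $d_\Delta$ with the shift $[n+1]$ and with the square-zero variable $\eps$. Conceptually nothing new occurs --- the content is that tensoring a filtered DGLA with the commutative DGA $\Omega^\bt(\Delta^m)$ preserves all the algebraic identities --- but the signs must be arranged consistently so that the two residual terms are exactly the genuine Maurer--Cartan obstructions of $\omega$ and of $\pi$.
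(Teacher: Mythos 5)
Your proposal is correct, and its core coincides with the paper's proof: define the map levelwise by $(\omega,\pi)\mapsto \pi+\mu(\omega,\pi)\eps$, and use Lemma \ref{keylemma} together with $D\omega=0$ and $\kappa(\pi)=0$ to see that $\mu(\omega,\pi)$ is a $\delta_{\pi}$-cocycle, so that the image satisfies the Maurer--Cartan equation. Where you diverge is in the treatment of the higher simplices, and the paper's device there eliminates precisely the bookkeeping you flag as the remaining obstacle: rather than extending Lemma \ref{keylemma} $\Omega^{\bt}(\Delta^m)$-linearly and proving the auxiliary Leibniz rule $d_{\Delta}\mu(\omega,\pi)=\mu(d_{\Delta}\omega,\pi)\pm\nu(\omega,\pi,d_{\Delta}\pi)$, it simply replaces the pair $(R,A)$ by $(R\ten\Omega^{\bt}(\Delta^m),\,A\ten\Omega^{\bt}(\Delta^m))$. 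Since Lemma \ref{keylemma} was proved for an arbitrary such pair, and the structural differential $\delta$ of $A\ten\Omega^{\bt}(\Delta^m)$ is exactly your total differential $\delta\pm d_{\Delta}$, the lemma applies verbatim with $D$ and $\kappa$ already being the total versions; your displayed identity $\delta_{\pi}^{\mathrm{tot}}\mu(\omega,\pi)=\mu((D\pm d_{\Delta})\omega,\pi)+\nu(\omega,\pi,\kappa(\pi)\pm d_{\Delta}\pi)$ is then literally Lemma \ref{keylemma} for the new pair, with all Koszul signs dictated automatically by the CDGA structure and never needing explicit verification. Your by-hand route does go through --- the Leibniz rule is valid because both sides are derivations along $\mu(-,\pi)$ agreeing on $A$ and on generators $df$ with $f\in A$, by the same generator argument as in the proof of Lemma \ref{keylemma} --- so there is no gap; it is just more laborious than necessary. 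The formal points (descent modulo $F^{p}$ via $\mu(F^{p},\pi)\subset F^{p}$ and $\nu(F^{p},\pi,b)\subset F^{p}$ from Definition \ref{mudef}, compatibility as $p$ varies, and passage to the inverse limit) agree with the paper.
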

\begin{proof}
 For $\omega \in \PreSp(A,n)_0$, $\pi \in \cP(A,n)_0$,  Lemma \ref{keylemma} shows that $\mu(\omega, \pi) \in T_{\pi}\cP(A,n)/F^{p}$. 
Replacing $A,R$ with $A\ten \Omega^{\bt}(\Delta^m), R\ten \Omega^{\bt}(\Delta^m)$ then shows that the statement also holds on the $m$th 
level of the simplicial set.
\end{proof}

An alternative approach to proving Lemma \ref{mulemma} is to observe that $\mu$ defines a filtered $L_{\infty}$-morphism
\[
 F^2  \DR(A/R)^{[n+1]} \by  F^2 \widehat{\Pol}(A,n)^{[n+1]} \xra{\pr_2 + \mu \eps}  F^2 \widehat{\Pol}(A,n)^{[n+1]}[\eps]
\]
(with respect to the filtration $F$), and then to apply the functor $\mmc$.

\begin{example}\label{compatex1}
If $\omega$ and $\pi$ are pre-symplectic and Poisson, with $\omega_i=0$ for $i >2$ and $\pi_i=0$ for $i>2$,  then observe that 
$
 \mu(\omega, \pi)
$
 induces the morphism
\[
 \mu(\omega, \pi)^{\sharp} \co (\Omega^1_{A/R})_{[-n]}\to \Hom_A(\Omega^1_{A/R},A)
\]
given by 
\[
 \mu(\omega, \pi)^{\sharp} = \pi^{\sharp} \circ \omega^{\sharp} \circ \pi^{\sharp}.
\]
\end{example}

\begin{definition}
We say that an $n$-shifted  pre-symplectic structure $\omega$ and an $n$-Poisson structure $\pi$ are  compatible (or a compatible pair) if 
\[
 [\mu(\omega, \pi)] = [\sigma(\pi)] \in  \H^{n+2}(F^2\widehat{\Pol}_{\pi}(A,n)) =\pi_0T_{\pi}\cP(A,n),
\]
where $\sigma$ is the canonical tangent vector of Definition \ref{sigmadef}. 
\end{definition}

\begin{example}\label{compatex2}
Following Example \ref{compatex1}, if $\omega_i=0$ for $i >2$ and $\pi_i=0$ for $i>2$, then $(\omega, \pi)$ are a compatible pair
if and only if the map
\[
 \pi^{\sharp} \circ \omega^{\sharp} \circ \pi^{\sharp} \co (\Omega^1_{A/R})_{[-n]}\to \Hom_A(\Omega^1_{A/R},A)
\]
is homotopic to $\pi^{\sharp}$, because $\sigma(\pi)=\pi$ in this case. 

In particular, if $\pi$ is non-degenerate, this means that $\omega$ and $\pi$ determine each other up to homotopy. 
\end{example}

\begin{lemma}\label{compatnondeg}
If  $(\omega, \pi)$ is a compatible pair and $\pi$ is non-degenerate, then $\omega$ is symplectic.
\end{lemma}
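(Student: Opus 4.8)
The plan is to extract the leading-order (weight-two) part of the compatibility equation and reduce it to a statement about $\pi_2^{\sharp}$ and $\omega_2^{\sharp}$ alone. Recall that compatibility asserts
\[
[\mu(\omega,\pi)]=[\sigma(\pi)] \in \H^{n+2}(F^2\widehat{\Pol}_{\pi}(A,n)),
\]
where the differential on $\widehat{\Pol}_{\pi}(A,n)$ is $\delta+[\pi,-]$. Since $\pi \in F^2$ and $[F^i,F^j]\subset F^{i+j-1}$ by Definition \ref{Fdef}, the bracket operator $[\pi,-]$ strictly raises the filtration. Hence the projection $F^2\widehat{\Pol}_{\pi}(A,n)\to \gr^2_F=F^2/F^3$ is a morphism of complexes once $\gr^2_F=\HHom_A(\CoS^2_A((\Omega^1_{A/R})_{[-n-1]}),A)$ is equipped with the differential $\delta$. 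Applying $\H^{n+2}$ yields a homomorphism $\H^{n+2}(F^2\widehat{\Pol}_{\pi}(A,n))\to \H^{n+2}(\gr^2_F,\delta)$ along which I will transport the compatibility identity.

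First I would compute the images of the two sides under this projection. Since $\sigma(\pi)=\sum_{i\ge 2}(i-1)\pi_i$, its weight-two part is simply $\pi_2$, whose class is represented by $\pi_2^{\sharp}$. For $\mu(\omega,\pi)$, each contraction $[\pi,f]=\sum_k[\pi_k,f]$ raises the filtration by at least one, so a $p$-form contributes only to $F^p$; thus in weight two only $\omega_2$ contributes, and there only through its $\pi_2$-components. By Example \ref{compatex1} this leading part is exactly the class of $\pi_2^{\sharp}\circ\omega_2^{\sharp}\circ\pi_2^{\sharp}$. Transporting the compatibility identity therefore gives
\[
\pi_2^{\sharp}\circ\omega_2^{\sharp}\circ\pi_2^{\sharp}\simeq \pi_2^{\sharp}
\]
as morphisms $(\Omega^1_{A/R})_{[-n]}\to \HHom_A(\Omega^1_{A/R},A)$ in the derived category $D(A)$ of $A$-modules (identifying $\Hom_A(\Omega^1_{A/R},A)$ with the tangent complex $\HHom_A(\Omega^1_{A/R},A)$).

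Finally, non-degeneracy of $\pi$ says precisely that $\pi_2^{\sharp}$ is a quasi-isomorphism and that $\Omega^1_{A/R}$ is perfect. Composing the displayed homotopy on the left with the inverse of $\pi_2^{\sharp}$ in $D(A)$ gives $\omega_2^{\sharp}\circ\pi_2^{\sharp}\simeq \id$, whence $\omega_2^{\sharp}=(\pi_2^{\sharp})^{-1}$ is itself a quasi-isomorphism; together with perfectness of $\Omega^1_{A/R}$ this is exactly the condition that $\omega$ be symplectic. The main obstacle is the bookkeeping in the second paragraph: one must verify that the weight-two part of $\mu(\omega,\pi)$ receives no contribution from the higher components $\omega_i$ ($i>2$) or $\pi_k$ ($k>2$), so that Example \ref{compatex1} genuinely computes the leading term, and that passing to $\gr^2_F$ truly annihilates $[\pi,-]$, placing the resulting identity in the $\delta$-cohomology where $\pi_2^{\sharp}$ and $\omega_2^{\sharp}$ can be compared and cancelled directly.
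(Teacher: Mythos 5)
Your proposal is correct and takes essentially the same route as the paper, whose entire proof is the observation that $\pi_2^{\sharp}\circ\omega_2^{\sharp}\circ\pi_2^{\sharp}\simeq\pi_2^{\sharp}$ holds even without the vanishing hypotheses of Example \ref{compatex1}, so that $\omega_2^{\sharp}$ is the homotopy inverse of the quasi-isomorphism $\pi_2^{\sharp}$. Your projection to $\gr_F^2\widehat{\Pol}_{\pi}(A,n)$ (using $[\pi,-]\co F^2\to F^3$ and the fact that $\mu(\omega_p,\pi)\in F^p$) simply makes explicit the leading-term extraction that the paper leaves implicit.
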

\begin{proof}
Even when the vanishing conditions of Example \ref{compatex1} are not satisfied, we still have
\[
 \pi^{\sharp}_2 \circ \omega^{\sharp}_2 \circ \pi^{\sharp}_2 \simeq \pi^{\sharp}_2,
\]
so if $\pi^{\sharp}_2$ is a quasi-isomorphism, then  $\omega^{\sharp}_2$ must be its homotopy inverse.
\end{proof}

\begin{definition}\label{vanishingdef}
Given a simplicial set $Z$, an abelian group object $A$ in simplicial sets over $Z$, and a section $s \co Z \to A$, define the homotopy vanishing locus of $s$ to be the homotopy limit of the diagram
\[
\xymatrix@1{ Z \ar@<0.5ex>[r]^-{s}  \ar@<-0.5ex>[r]_-{0} & A \ar[r] & Z}.
\]
\end{definition}

We can write this as a homotopy fibre product $Z \by_{(s,0), A \by^h_Z A}^hA$, for the diagonal map $A \to A \by^h_Z A$. When $A$ is a trivial bundle $A = Z \by V$, for $V$ a simplicial abelian group, note that the homotopy vanishing locus is just the homotopy fibre of $s \co Z \to V$ over $0$. 

\begin{definition}\label{compdef}
Define the space $\Comp(A,n)$ of compatible $n$-shifted pairs to be the homotopy vanishing locus of
\[
 \mu - \sigma \co \PreSp(A,n) \by \cP(A,n) \to \pr_2^*T\cP(A,n) =\PreSp(A,n) \by T\cP(A,n).
\]

We define a cofiltration on this space by setting $ \Comp(A,n)/F^{p}$ to be the homotopy
vanishing locus of
\[
 \mu - \sigma \co \PreSp(A,n)/F^{p} \by \cP(A,n)/F^{p} \to \pr_2^*T\cP(A,n)/F^{p}.
\]
\end{definition}

We can rewrite $\Comp(A,n)$ as the homotopy limit of the diagram
\[
\xymatrix@1{  \PreSp(A,n) \by \cP(A,n) \ar@<0.5ex>[r]^-{(\pr_2 + \mu\eps)}  \ar@<-0.5ex>[r]_-{ \pr_2 +\sigma\pr_2\eps} & T\cP(A,n) \ar[r] & \cP(A,n) }
\]
of simplicial sets. 

In particular, an object of this space is given by a pre-symplectic structure $\omega$, a Poisson structure $\pi$, and a homotopy $h$ between $\mu(\omega,\pi)$ and $\sigma(\pi)$ in $T_{\pi}\cP(A,n)$.
%

\begin{definition}
 Define $\Comp(A,n)^{\nondeg} \subset \Comp(A,n)$ to consist of compatible pairs with $\pi$ non-degenerate. This is a union of path-components, and by Lemma \ref{compatnondeg} has a natural projection 
\[
 \Comp(A,n)^{\nondeg}\to \Sp(A,n)
\]
as well as the canonical map
\[
 \Comp(A,n)^{\nondeg} \to\cP(A,n)^{\nondeg}.
\]
\end{definition}

\begin{proposition}\label{compatP1}
The canonical map
\begin{eqnarray*}
    \Comp(A,n)^{\nondeg} \to  \cP(A,n)^{\nondeg}           
\end{eqnarray*}
 is a weak equivalence.
\end{proposition}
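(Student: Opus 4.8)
The plan is to show the projection has contractible homotopy fibres over every point of $\cP(A,n)^{\nondeg}$; since these fibres will be nonempty, the long exact sequence then forces a bijection on $\pi_0$ and isomorphisms on all higher homotopy groups. Fix a non-degenerate $\pi \in \cP(A,n)_0$. Unwinding the homotopy-limit description of $\Comp(A,n)$ as the homotopy vanishing locus of $\mu-\sigma$, the fibre of $\Comp(A,n) \to \cP(A,n)$ over $\pi$ is the homotopy vanishing locus of the section $\omega \mapsto \mu(\omega,\pi)-\sigma(\pi)$ of the trivial bundle $\PreSp(A,n) \by T_\pi\cP(A,n)$. As $T_\pi\cP(A,n)$ is fibred in simplicial abelian groups and $\sigma(\pi)$ is a fixed $0$-simplex, translation by $\sigma(\pi)$ identifies this with the homotopy fibre of
\[
 \mu(-,\pi) \co \PreSp(A,n) \to T_\pi\cP(A,n)
\]
over the point $\sigma(\pi)$. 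Since non-degeneracy is a condition on $\pi$ alone, this is equally the fibre of the restricted map $\Comp(A,n)^{\nondeg}\to\cP(A,n)^{\nondeg}$, so it suffices to prove that $\mu(-,\pi)$ is a weak equivalence whenever $\pi$ is non-degenerate: all of its homotopy fibres are then contractible, in particular the one over $\sigma(\pi)$.

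To see that $\mu(-,\pi)$ is a weak equivalence, recall that $\PreSp(A,n)$ and $T_\pi\cP(A,n)$ are weakly equivalent to the Dold--Kan denormalisations of $\tau^{\le 0}$ of the shifted complexes $F^2\DR(A/R)^{[n+2]}$ and $F^2\widehat{\Pol}_\pi(A,n)^{[n+2]}$ respectively (the first as noted after Definition \ref{PreSpdef}, the second by the lemma computing $T_\pi\cP(A,n)$). Since $\pi$ is Poisson we have $\kappa(\pi)=0$, so the second identity of Lemma \ref{keylemma} shows that $\mu(-,\pi)$ is a morphism of filtered complexes intertwining the total differential $D$ on $F^2\DR(A/R)$ with $\delta_\pi$ on $F^2\widehat{\Pol}_\pi(A,n)$, inducing the comparison map on denormalisations. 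It therefore suffices to show that $\mu(-,\pi)$ is a filtered quasi-isomorphism; by completeness of both filtrations this reduces to showing that $\gr_F^i\mu(-,\pi)$ is a quasi-isomorphism for every $i$.

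On the associated graded the operator $[\pi,-]$ strictly raises the filtration (as $\pi \in F^2$ and $[F^i,F^j]\subset F^{i+j-1}$), so $\delta_\pi$ reduces to $\delta$ and only the quadratic part $\pi_2$ contributes. Explicitly, $\gr_F^i\mu$ sends $df_1\wedge\dots\wedge df_i \mapsto [\pi_2,f_1]\dots[\pi_2,f_i]$; under the self-duality $\HHom_A(\CoS_A^i M,A)\cong\CoS_A^i\HHom_A(M,A)$, which holds because $\Omega^1_{A/R}$ is perfect, this is the $i$-th symmetric power of the contraction map
\[
 \pi_2^\sharp \co (\Omega^1_{A/R})_{[-n]} \to \HHom_A(\Omega^1_{A/R},A),
\]
the shifts in Definition \ref{poldef} being chosen precisely so that these symmetric powers coincide with the exterior powers $\Lambda^i_A\Omega^1_{A/R}=\Omega^i_{A/R}$ via d\'ecalage. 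Non-degeneracy of $\pi$ is exactly the statement that $\pi_2^\sharp$ is a quasi-isomorphism of perfect complexes, and, working over $\Q$, its symmetric powers are then quasi-isomorphisms as well. Hence $\gr_F^i\mu(-,\pi)$ is a quasi-isomorphism for each $i$, completing the proof.

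The main obstacle is the associated-graded identification in the last paragraph: one must compute $\gr_F^i\mu(-,\pi)$ precisely, tracking the shifts and the Koszul signs that trade exterior powers for symmetric powers, and justify the self-duality $\HHom_A(\CoS_A^i M,A)\cong\CoS_A^i\HHom_A(M,A)$ --- the one place where perfectness of $\Omega^1_{A/R}$ is genuinely used. The remaining steps, namely the identification of the fibre and the reduction to associated graded pieces, are formal consequences of the homotopy-limit definition and the standard obstruction theory of \S \ref{towersn}.
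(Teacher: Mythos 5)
Your proposal is correct and follows essentially the same route as the paper: identify the homotopy fibre of $\Comp(A,n)^{\nondeg} \to \cP(A,n)^{\nondeg}$ over $\pi$ with the homotopy fibre of $\mu(-,\pi)\co \PreSp(A,n) \to T_{\pi}\cP(A,n)$ over $\sigma(\pi)$, use Lemma \ref{keylemma} to see that $\mu(-,\pi)$ is a morphism of complete filtered complexes, and conclude via a filtered quasi-isomorphism since non-degeneracy of $\pi_2$ makes $\gr_F^i\mu(-,\pi)$ (essentially $\Symm^i(\pi_2^{\sharp})$, using perfectness of $\Omega^1_{A/R}$) a quasi-isomorphism. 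Your write-up merely spells out the associated-graded computation that the paper leaves implicit.
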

\begin{proof}
For any $\pi \in \cP(A,n)$, the homotopy fibre of $\Comp(A,n)^{\nondeg} $ over $\pi$ is just the homotopy fibre of
\[
\mu(-,\pi)  \co \PreSp(A,n)  \to T_{\pi}\cP(A,n) 
\]
over $\sigma(\pi)$.

The map $\mu(-,\pi) \co \DR(A/R) \to (\widehat{\Pol}(A,n), \delta_{\pi})$ is a morphism of complete filtered CDGAs by Lemma \ref{keylemma}, and non-degeneracy of $\pi_2$ implies that we have a quasi-isomorphism on the associated gradeds $\gr_F$.  We therefore have a quasi-isomorphism of filtered complexes, so we have isomorphisms on homotopy groups:
\begin{eqnarray*}
 \pi_j\PreSp(A,n)  &\to& \pi_jT_{\pi}\cP(A,n)\\
 \H^{n+2-j}(F^2 \DR(A/R)) &\to&  \H^{n+2-j}(F^2\widehat{\Pol}(A,n), \delta_{\pi}).
\end{eqnarray*}
\end{proof}

For an earlier analogue of this result, see \cite{KhudaverdianVoronov}, which takes $\delta=0$ and works in the $\Z/2$-graded (rather than $\Z$-graded) context, allowing $\pi=\sum_{i\ge 0} \pi_i$ to have constant and linear terms. It that setting, they show that for $\pi$ non-degenerate, there is a unique solution $\omega$ of the equation $\mu(\omega, \pi)  = \sigma(\pi)$, given by Legendre transformations.


\subsection{The obstruction  tower }\label{towersn}

We will now show that the tower $\Comp(A,n) \to \ldots \to \Comp(A,n)/F^{i+2} \to \ldots \to \Comp(A,n)/F^2 $ does not contain nearly as much information as first appears.

\subsubsection{Small extensions and obstructions}

\begin{definition}
 Say that a surjection $L \to M$ of DGLAs is small if the kernel $I$ satisfies $[I,L]=0$. 
\end{definition}
%

\begin{lemma}\label{obslemma}
Given a small extension $e \co L \to M$ of DGLAs 
with kernel $I$, there is a sequence
\[
\pi_0\mmc(L) \xra{e}  \pi_0\mmc(M) \xra{o_e} \H^2(I)
\]
of  sets, exact in the sense that $o_e^{-1}(0)$ is the image of $e$.
\end{lemma}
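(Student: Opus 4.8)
The plan is to build the obstruction map $o_e$ by hand from the Maurer--Cartan equation and then read off exactness from the same computation. I would first construct $o_e$ on the set $\mc(M)$, before passing to $\pi_0$. Given $\bar\omega \in \mc(M)$, use surjectivity of $e$ to choose a lift $\omega \in L^1$, and set $\kappa(\omega):= \delta\omega + \half[\omega,\omega]$. Since $e(\kappa(\omega)) = \delta\bar\omega + \half[\bar\omega,\bar\omega]=0$, this element lies in the kernel $I^2$. Two points then need checking, both resting only on $[I,L]=0$ and the graded Jacobi identity. First, $\kappa(\omega)$ is a cocycle in $I$: one computes $\delta\kappa(\omega) = [\delta\omega,\omega]$, and substituting $\delta\omega = \kappa(\omega) - \half[\omega,\omega]$ gives $[\kappa(\omega),\omega] - \half[[\omega,\omega],\omega]$, where the first term vanishes because $\kappa(\omega)\in I$ and the second by Jacobi. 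Second, the class $[\kappa(\omega)]\in\H^2(I)$ is independent of the lift: replacing $\omega$ by $\omega+\iota$ with $\iota\in I^1$ changes $\kappa$ by $\delta\iota + [\omega,\iota] + \half[\iota,\iota] = \delta\iota$, again using $[I,L]=0=[I,I]$. So $o_e(\bar\omega):=[\kappa(\omega)]$ is well defined on $\mc(M)$.

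The step I expect to be the crux is showing that $o_e$ factors through $\pi_0\mmc(M)$, i.e.\ that it is constant on path components. Rather than manipulate the gauge action directly, I would run the construction above over the base $\Omega^{\bt}(\Delta^1)$. Tensoring the small extension with $\Omega^{\bt}(\Delta^1)$ again yields a small extension $L\ten\Omega^{\bt}(\Delta^1)\to M\ten\Omega^{\bt}(\Delta^1)$ with abelian kernel, since $[I\ten\Omega^{\bt}(\Delta^1),\,L\ten\Omega^{\bt}(\Delta^1)]\subseteq [I,L]\ten\Omega^{\bt}(\Delta^1)=0$. An edge of $\mmc(M)$ joining $\bar\omega_0$ to $\bar\omega_1$ is an element of $\mc(M\ten\Omega^{\bt}(\Delta^1))$; lifting it to some $\Omega\in (L\ten\Omega^{\bt}(\Delta^1))^1$ produces a cocycle $\kappa(\Omega)\in (I\ten\Omega^{\bt}(\Delta^1))^2$ whose restrictions along the two vertex evaluations $\mathrm{ev}_0,\mathrm{ev}_1\co \Omega^{\bt}(\Delta^1)\to\Q$ are lifts of $\kappa$ computing $o_e(\bar\omega_0)$ and $o_e(\bar\omega_1)$ respectively. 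Since $\mathrm{ev}_0$ and $\mathrm{ev}_1$ are quasi-isomorphisms inducing the \emph{same} isomorphism $\H^2(I\ten\Omega^{\bt}(\Delta^1))\xra{\sim}\H^2(I)$ (both being homotopy inverse to $I\to I\ten\Omega^{\bt}(\Delta^1)$), we obtain $o_e(\bar\omega_0)=o_e(\bar\omega_1)$. As $\mmc(M)$ is a Kan complex, path components are detected by such edges, so $o_e$ descends to a map $\pi_0\mmc(M)\to\H^2(I)$.

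Finally I would establish exactness, $o_e^{-1}(0)=\im(e)$, by the two obvious inclusions. If $[\bar\omega]=e[\lambda]$ for $\lambda\in\mc(L)$, then by the $\pi_0$-invariance just proved $o_e[\bar\omega]=o_e[e(\lambda)]$, and taking $\lambda$ itself as a lift gives $\kappa(\lambda)=0$, whence $o_e[\bar\omega]=0$. Conversely, if $o_e[\bar\omega]=0$ then for any lift $\omega$ we have $\kappa(\omega)=\delta\iota$ for some $\iota\in I^1$; setting $\omega':=\omega-\iota$ and using $[I,L]=0$ exactly as in the independence-of-lift computation gives $\kappa(\omega')=0$, so $\omega'\in\mc(L)$ lifts $\bar\omega$ and $[\bar\omega]=e[\omega']\in\im(e)$. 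The only genuinely delicate point throughout is the $\pi_0$-invariance of the second paragraph; everything else is forced by $[I,L]=0$ together with the graded Jacobi identity.
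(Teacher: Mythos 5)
Your proposal is correct and uses exactly the paper's construction: the obstruction $o_e(\bar\omega)=[\delta\tilde\omega+\half[\tilde\omega,\tilde\omega]]$ for a lift $\tilde\omega\in L^1$, which the paper states with proof cited as well-known. Your verifications (cocycle property and lift-independence via $[I,L]=0$ and graded Jacobi, homotopy invariance by running the construction over $\Omega^{\bt}(\Delta^1)$ and using that both vertex evaluations invert the quasi-isomorphism $I\to I\ten\Omega^{\bt}(\Delta^1)$, and the two inclusions for exactness) are precisely the standard details the paper omits, and they are all accurate.
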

\begin{proof}
 This is well-known. 
The obstruction  map $o_e $ is given by 
\[
 o_e(\omega):=  d_L\tilde{\omega} + \half [\tilde{\omega},\tilde{\omega}]
\]
for any lift $\tilde{\omega} \in L^1$ of $\omega \in \mc(M)$.
\end{proof}
%
%
\begin{proposition}\label{obsDGLA}
For any small extension $e \co L \to M$   of DGLAs with kernel $I$, there is an obstruction  map $\ob_e\co \mmc(M) \to \mmc(I^{[1]})$ in the homotopy category of simplicial sets, with homotopy fibre $\mmc(L)$.
\end{proposition}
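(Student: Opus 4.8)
The plan is to build $\ob_e$ explicitly from a linear splitting of $e$ and then to identify its homotopy fibre with $\mmc(L)$ using the translation action of $\mmc(I)$, with smallness entering precisely through $[I,L]=0$. First I would choose a section $s\co M_{\#}\to L_{\#}$ of the surjection $e$ on underlying graded vector spaces (no compatibility with bracket or differential is required). For $\omega\in\mmc(M)_n=\mc(M\ten_{\Q}\Omega^{\bt}(\Delta^n))$ I set
\[
 \ob^s_e(\omega):= D s(\omega) + \half[s(\omega),s(\omega)],
\]
where $s$ and $D$ (the total differential) are taken with coefficients in $\Omega^{\bt}(\Delta^n)$. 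Applying $e$ gives $D\omega+\half[\omega,\omega]=0$, so $\ob^s_e(\omega)$ lies in $\ker e = I$ in total degree $2$; a short computation with the graded Jacobi identity reduces $D\,\ob^s_e(\omega)$ to $[\,\ob^s_e(\omega),s(\omega)]$, which vanishes by $[I,L]=0$. Since $[I,I]=0$ makes $I^{[1]}$ an abelian DGLA, whose Maurer--Cartan elements are exactly the degree-$1$ cocycles, we obtain $\ob^s_e(\omega)\in\z^2(I\ten\Omega^{\bt}(\Delta^n))=\mmc(I^{[1]})_n$, and these assemble into a simplicial map $\ob^s_e\co\mmc(M)\to\mmc(I^{[1]})$. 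Two sections differ by a map into $I$, and interpolating them linearly over $\Omega^{\bt}(\Delta^1)$ produces a simplicial homotopy $\ob^{s_0}_e\simeq\ob^{s_1}_e$; hence $\ob_e$ is well defined in $\Ho(s\Set)$.

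Next I would identify the homotopy fibre. Using the standard fact (as in \cite{hinstack}) that $\mmc$ sends surjections of nilpotent DGLAs to Kan fibrations, $e_*\co\mmc(L)\to\mmc(M)$ is a fibration, so its strict fibres compute homotopy fibres. For $\lambda\in\mmc(L)_n$, writing $\iota=\lambda - s(e\lambda)\in I^1$ one computes $\ob^s_e(e_*\lambda)=-D\iota$, which is canonically null in the abelian space $\mmc(I^{[1]})$; this yields a comparison map $\Phi\co\mmc(L)\to\mathrm{hofib}(\ob_e)$ sending $\lambda$ to $e_*\lambda$ together with the nullhomotopy determined by $-\iota$. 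To prove $\Phi$ a weak equivalence I would compare the two fibrations over $\mmc(M)$: the simplicial abelian group $\mmc(I)$ acts fibrewise on $\mmc(L)$ by the translation $\lambda\mapsto\lambda+\iota$ (well defined because $[I,L]=0$ forces $\lambda+\iota$ to be Maurer--Cartan whenever $\iota\in\z^1(I)$), and on $\mathrm{hofib}(\ob_e)$ it acts through the loop space $\Omega_0\mmc(I^{[1]})\simeq\mmc(I)$. The map $\Phi$ is equivariant and restricts on each nonempty fibre to a map of $\mmc(I)$-torsors, hence a weak equivalence there; since both total spaces meet exactly the components of $\mmc(M)$ on which the obstruction class of Lemma \ref{obslemma} vanishes, $\Phi$ is a global weak equivalence.

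Finally the target is identified by delooping: $\mmc(I)$ is, via Dold--Kan, the denormalisation of $\tau^{\le 0}I^{[1]}$, and its classifying space satisfies $\bar{W}\mmc(I)\simeq\mmc(I^{[1]})$, the delooping corresponding to the shift, so that $\pi_i\mmc(I^{[1]})=\H^{2-i}(I)$ (consistent at $\pi_0$ with Lemma \ref{obslemma}). The main obstacle is the promotion of the $\pi_0$-level exactness of Lemma \ref{obslemma} to a genuine fibre sequence of spaces: one must verify that the fibrewise $\mmc(I)$-action is free and transitive on all of $\mmc(L)$ over its image and dovetails with the Hinich fibration property, so that $\Phi$ induces isomorphisms on all homotopy groups rather than only on $\pi_0$. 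The non-canonicity of the section $s$ --- which is why $\ob_e$ lives only in $\Ho(s\Set)$ --- is dealt with by the interpolation argument, but one must take care that the comparison map is $\mmc(I)$-equivariant so that the fibrewise torsor argument actually applies.
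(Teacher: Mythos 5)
Your argument is correct, but it takes a genuinely different route from the paper. The paper's proof avoids choosing a section altogether: it forms the mapping cone $M'$ of $I \to L$, made into a DGLA by decreeing $[M',I^{[1]}]=0$ (this is where smallness enters there --- the Leibniz rule for the cone differential needs $[I,L]=0$), observes that $M' \to M \oplus I^{[1]}$ is a surjection of DGLAs with kernel $I$, that $M' \to M$ is a small extension with acyclic kernel so $\mmc(M')\to\mmc(M)$ is a trivial fibration, and defines $\ob_e$ as the zig-zag $\mmc(M) \xla{\sim} \mmc(M') \to \mmc(I^{[1]})$; the homotopy fibre is then simply the strict fibre of the Kan fibration $\mmc(M')\to\mmc(I^{[1]})$, i.e.\ $\mmc(\ker(M'\to I^{[1]}))=\mmc(L)$, with no torsor comparison needed. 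Your construction instead produces the explicit curvature cocycle $Ds(\omega)+\half[s\omega,s\omega]$, which has the merit of visibly extending the $\pi_0$-formula of Lemma \ref{obslemma} to a simplicial map; the costs are the section-dependence (your interpolation handles this, and in fact $\ob^{s_1}_e(\omega)-\ob^{s_0}_e(\omega)=D((s_1-s_0)\omega)$ exactly, since the bracket cross-terms die by $[I,L]=0$) and the fibrewise identification, which is where the remaining care lies, just as you flag: your comparison on fibres is equivariant along the group homomorphism $\mmc(I)\to \Omega_0\mmc(I^{[1]})$ sending a cocycle $\iota$ to the loop $-\iota\, dt$, which is a weak equivalence rather than an isomorphism, so ``map of $\mmc(I)$-torsors'' should be read as a torsor map along this homomorphism; combined with the facts that both $e_*$ (by Hinich's fibration property) and the pulled-back path fibration are Kan fibrations over $\mmc(M)$, and that both strict fibres over a vertex $\omega$ are nonempty precisely when $[\ob^s_e(\omega)]=0\in\H^2(I)$ (a lift $s\omega+\iota$ satisfies Maurer--Cartan iff $\ob^s_e(\omega)=-D\iota$, again by $[I,L]=0$), the fibrewise criterion does yield the global equivalence. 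In short, yours is the classical explicit-cocycle, Goldman--Millson-style argument, more computational and making the obstruction class concrete; the paper's mapping-cone trick buys brevity and strictness --- everything happens at the level of DGLA surjections, choice-free, so the homotopy fibre statement is immediate.
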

\begin{proof}
This is essentially the same as \cite[Theorem \ref{ddt1-robs}]{ddt1}.
Set $M'$ to be the mapping cone of $I \to L$, with  DGLA structure given by setting $[M',I^{[1]}]=0$. Then we have a surjection  $M' \to M \oplus I^{[1]}$ of  DGLAs with kernel $I$. Since the map $M' \to M$ is a small extension with acyclic kernel, it follows that $\mmc(M') \to \mmc(M)$ is a trivial fibration, hence a weak equivalence.

The obstruction map is then given by $\mmc(M') \to \mmc(I^{[1]})$, the homotopy fibre being $\mmc(L)$, as required.
\end{proof}

\begin{proposition}\label{obsles}
 Let $A$ be an abelian group object over a simplicial set $Z$, with homotopy fibre $A_z$ over $z \in Z$. If $s \co Z \to A$ is a section with homotopy vanishing locus $Y$, there is a long exact sequence
\[
 \xymatrix@R=0ex{
\cdots \ar[r] & \pi_i(Y,z)  \ar[r]& \pi_i(Z,z) \ar[r]^-s&  \pi_i(A_z,0)  \ar[r] &\pi_{i-1}(Y,z) \ar[r] &\cdots\\ 
\cdots \ar[r]& \pi_0Y \ar[r]& \pi_0Z \ar[r]^-s& \pi_0(A_?),
}
\]
of groups and sets, where the final map sends $z$ to $s(z) \in \pi_0(A_s)$
\end{proposition}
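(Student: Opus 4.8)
The plan is to exhibit the projection $Y \to Z$ as a Kan fibration, to identify its fibre with the based loop space of $A_z$, and then to read the claimed sequence off as the long exact sequence of homotopy groups of that fibration. First I would use the homotopy fibre product description recorded after Definition \ref{vanishingdef}: after replacing $p\co A \to Z$ by a Kan fibration, $Y$ is the homotopy pullback of the diagonal $\Delta \co A \to A\by^h_Z A$ along $(s,0)$. Since $\Delta$ is a fibration, this homotopy pullback is computed as a strict pullback of a fibration, so $Y \to Z$ is itself a Kan fibration, and its fibre over a vertex $z$ is the fibre of $\Delta$ over $(s(z),0) \in A_z \by A_z$. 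That fibre is the space $P$ of paths in $A_z$ from $s(z)$ to $0$; a basepoint of $Y$ lying over $z$ is exactly a choice of such path $\gamma_0$, so $P$ is nonempty, and because $A_z$ is a simplicial abelian group (hence a simple space) the choice of $\gamma_0$ yields an equivalence $P \simeq \Omega A_z$. In particular $\pi_{i-1}(P)\cong \pi_i(A_z,0)$ for all $i \ge 1$.

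Next I would write down the long exact sequence of the fibration $P \to Y \to Z$ and substitute $\pi_{i-1}(P)=\pi_i(A_z,0)$; this yields precisely the stated sequence. The low-degree segment $\pi_0Y \to \pi_0Z \to \pi_0(A_z)$ then simply records that a component $[z]$ lifts to $Y$ if and only if the fibre $P$ over $z$ is nonempty, i.e. $s(z)\simeq 0$ in $A_z$, which is exactly the asserted exactness of sets at $\pi_0 Z$ with final map $z \mapsto s(z)$.

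The step requiring genuine care, and the main obstacle, is identifying the connecting map $\partial\co \pi_i(Z,z) \to \pi_{i-1}(P)=\pi_i(A_z,0)$ with the map induced by $s$. Here the abelian group structure of $A$ over $Z$ is what makes this work: it supplies a fibrewise subtraction, so that for a class represented by $\alpha\co S^i \to Z$ the two lifts $s\circ\alpha$ and $0\circ\alpha$ may be compared inside the fibres, and unwinding the definition of $\partial$ identifies $\partial[\alpha]$ with the resulting class $(s-0)_*[\alpha]=s_*[\alpha]$. This is transparent in the split case $A=Z\by V$ of the remark after Definition \ref{vanishingdef}, where $Y=\mathrm{hofib}_0(s\co Z \to V)$ and $\partial$ is literally $s_*$; the general case follows from this local model via the fibrewise group structure, completing the identification of the sequence.
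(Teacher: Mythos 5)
Your strategy is correct and genuinely different in mechanics from the paper's. The paper never fibres $Y$ over $Z$ at all: it takes the Mayer--Vietoris long exact sequence of the defining homotopy pullback square $Y = Z \by^h_{(s,0),\, A\by^h_ZA} A$, giving $\cdots \to \pi_i(Y) \to \pi_i(Z)\by\pi_i(A) \to \pi_i(A\by^h_ZA) \to \cdots$, and then cancels the $\pi_i(A)$ factors using the splitting coming from $(A\by^h_ZA)\by^h_A\{z\} = A_z$ (the projection $A\by^h_ZA \to A$ has the diagonal as a section). The map $\pi_i(Z)\to\pi_i(A_z,0)$ simply falls out of this cancellation, so the paper sidesteps any connecting-map computation. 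You instead realise $Y \to Z$ as a fibration with fibre the fibrewise path space $P \simeq \Omega A_z$ and must then identify the connecting map with $s_*$ --- which is exactly where your argument works hardest. What your route buys is an explicit description of all the maps in the sequence; what the paper's buys is brevity.

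Two points need repair or completion. First, your assertion that the diagonal $\Delta \co A \to A\by^h_ZA$ is a fibration is false: a diagonal is a monomorphism and is essentially never a Kan fibration, and if it literally were one its fibre over $(s(z),0)$ would be empty or a point, not the path space your argument needs. What you actually use (correctly, since your fibre comes out as paths in $A_z$ from $s(z)$ to $0$) is the factorisation of $\Delta$ through the fibrewise path space $A^{\Delta^1}\by_{Z^{\Delta^1}}Z \onto A\by_ZA$, which for $p\co A \to Z$ a fibration is a fibration with the fibres you describe; with that substitution your first paragraph goes through. Second, your final reduction ``the general case follows from this local model via the fibrewise group structure'' can be made precise by the shear isomorphism $(a,b)\mapsto(a-b,b)$ of $A\by_ZA$ over $Z$: under it the diagonal becomes the strict pullback of the zero section $0 \co Z \to A$ along the subtraction map $d\co A\by_ZA \to A$, and $d$ (being $\pr_1$ after shearing) is a fibration, so pasting homotopy pullback squares and using $d\circ(s,0)=s$ yields $Y \simeq Z\by^h_{s,A,0}Z$. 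For this homotopy fibre product the connecting map is $s_*$ by the standard computation for the homotopy fibre of a map into a pointed target --- precisely your split model --- so your sketch does complete to a proof.
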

\begin{proof}
Since $Y$ is the homotopy fibre product
\[
 Z \by_{(s,0), (A\by^h_ZA)} A,
\]
the long exact sequence of homotopy 
gives 
\[
  \ldots \to \pi_i(Y,z) \to \pi_i(Z,z)  \by \pi_i(A,z) \to \pi_i(A\by^h_ZA,z)\to \ldots.
\]
Since $(A\by^h_ZA)\by^h_A\{z\}= A_z$, this simplifies to
\[
  \ldots \to \pi_i(Y,z) \to \pi_i(Z,z)  \to \pi_i(A_z,z).
\]
\end{proof}

Combining Propositions \ref{obsDGLA} and \ref{obsles} gives:
\begin{corollary}\label{obsDGLAcor}
 Given a small extension $e \co L \to M$ of DGLAs with kernel $I$, there is a canonical long exact sequence
\[
 \xymatrix@R=0ex{
\cdots  \ar[r]^-{e_*}&\pi_i\mmc(M) \ar[r]^-{o_e}& \H^{2-i}(I) \ar[r] &\pi_{i-1}\mmc(L)\ar[r]^-{e_*}&\cdots\\ 
\cdots \ar[r]^-{e_*}&\pi_1\mmc(M) \ar[r]^-{o_e}& \H^1(I)  \ar[r] &\pi_0\mmc(L) \ar[r]^-{e_*}& \pi_0\mmc(M) \ar[r]^-{o_e}& \H^{2}(I).
}
\]
\end{corollary}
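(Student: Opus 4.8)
The corollary is a formal consequence of the two preceding propositions, so the plan is simply to feed the homotopy-fibre description of Proposition \ref{obsDGLA} into the long exact sequence of Proposition \ref{obsles}. First I would record that, since $e \co L \to M$ is small, the kernel $I$ satisfies $[I,L]=0$ and hence $[I,I]=0$; thus $I^{[1]}$ is an abelian DGLA and $\mmc(I^{[1]})$ is a simplicial abelian group, which is what lets us invoke Proposition \ref{obsles}.

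I would then regard the obstruction map $\ob_e \co \mmc(M) \to \mmc(I^{[1]})$ of Proposition \ref{obsDGLA} as a section $s$ of the trivial abelian group bundle $A := \mmc(M) \by \mmc(I^{[1]})$ over $Z := \mmc(M)$. By the observation following Definition \ref{vanishingdef}, the homotopy vanishing locus of a section of a trivial bundle is precisely the homotopy fibre of $\ob_e$ over $0$, which Proposition \ref{obsDGLA} identifies with $\mmc(L)$; moreover the fibre $A_z$ over any point is $\mmc(I^{[1]})$. Applying Proposition \ref{obsles} with these data ($Z = \mmc(M)$, $Y = \mmc(L)$, $A_z = \mmc(I^{[1]})$, $s = \ob_e$) yields the long exact sequence
\[
\cdots \to \pi_i\mmc(L) \to \pi_i\mmc(M) \xra{o_e} \pi_i\mmc(I^{[1]}) \to \pi_{i-1}\mmc(L) \to \cdots,
\]
ending in the pointed-set tail down to $\pi_0\mmc(M) \xra{o_e} \pi_0\mmc(I^{[1]})$.

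It then remains to identify the terms $\pi_i\mmc(I^{[1]})$. Since $I^{[1]}$ is abelian, $\mmc(I^{[1]})$ is weakly equivalent to the Dold--Kan denormalisation of $\tau^{\le 0}(I^{[2]})$, so $\pi_i\mmc(I^{[1]}) = \H^{1-i}(I^{[1]}) = \H^{2-i}(I)$, using the shift convention $(I^{[1]})^m = I^{m+1}$. Substituting this into the sequence above produces exactly the stated one, and the explicit description of the $\pi_0$-level map in Lemma \ref{obslemma} confirms that the final arrow is the obstruction map $o_e \co \pi_0\mmc(M) \to \H^2(I)$. The only point requiring genuine care --- and the step I would verify most carefully --- is the degree bookkeeping at both ends: ensuring the shift conventions place $\pi_i\mmc(I^{[1]})$ in $\H^{2-i}(I)$ rather than an off-by-one cohomology group, and checking that the terminal segment of Proposition \ref{obsles}, which is an exact sequence of pointed sets rather than groups, still yields exactness at $\pi_0\mmc(L)$ together with the correct fibrewise interpretation of the final obstruction map.
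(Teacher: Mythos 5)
Your proposal is correct and follows exactly the route the paper intends: the paper's entire proof is the sentence ``Combining Propositions \ref{obsDGLA} and \ref{obsles} gives,'' and you have filled in precisely the implicit steps --- viewing $\ob_e$ as a section of the trivial bundle $\mmc(M)\by\mmc(I^{[1]})$ so that its homotopy vanishing locus is the homotopy fibre $\mmc(L)$, and identifying $\pi_i\mmc(I^{[1]})=\H^{2-i}(I)$ via Dold--Kan since $[I,L]=0$ forces $I$ to be abelian. Your degree bookkeeping and the pointed-set reading of the tail (matching the explicit $o_e$ of Lemma \ref{obslemma}) are both right; the only cosmetic point is that $\ob_e$ is a priori only defined on the weakly equivalent model $\mmc(M')$, which changes nothing in the long exact sequence.
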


\subsubsection{Towers of obstructions}

The following is  the long exact sequence of cohomology:
\begin{proposition}\label{DRobs}
For each $p$, there is a canonical long exact sequence
\begin{align*}
\ldots\to \H_{p+i-n-2}(\Omega^{p}_{A/R}) \to \pi_i(\PreSp/F^{p+1}) \to  \pi_i(\PreSp/F^{p}) \to \H_{p+i-n-3}(\Omega^{p}_{A/R}) 
\to\ldots 
\end{align*}
 of homotopy groups, where $\PreSp=\PreSp(A,n)$.
\end{proposition}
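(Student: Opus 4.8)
The plan is to realise this sequence as a direct instance of the obstruction theory of Corollary \ref{obsDGLAcor}, applied to the defining presentation of the pre-symplectic spaces. Recall that $\PreSp/F^{p} = \mmc(F^2\DR(A/R)^{[n+1]}/F^{p})$, so the quotient map
\[
 e \co F^2\DR(A/R)^{[n+1]}/F^{p+1} \lra F^2\DR(A/R)^{[n+1]}/F^{p}
\]
is a surjection of DGLAs whose kernel is $I = \gr^p_F \DR(A/R)^{[n+1]} = F^p\DR(A/R)^{[n+1]}/F^{p+1}$ (using $F^p \subset F^2$ for $p \ge 2$). The first point to observe is that, because the Lie bracket on $\DR(A/R)^{[n+1]}$ is trivial, the condition $[I,L]=0$ holds automatically, so $e$ is a small extension in the required sense.

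Applying Corollary \ref{obsDGLAcor} to $e$ then yields a canonical long exact sequence
\[
 \cdots \to \H^{1-i}(I) \to \pi_i\mmc(L) \xra{e_*} \pi_i\mmc(M) \xra{o_e} \H^{2-i}(I) \to \pi_{i-1}\mmc(L) \to \cdots,
\]
in which $\mmc(L) = \PreSp/F^{p+1}$ and $\mmc(M) = \PreSp/F^{p}$. (Since the bracket is trivial one could equally well obtain this as the cohomology long exact sequence of the short exact sequence $0 \to I \to L \to M \to 0$ of complexes, via the identification $\pi_j\mmc(-) = \H^{1-j}(-)$ for abelian DGLAs; this is what is meant by describing it as ``the long exact sequence of cohomology''.) It therefore remains only to identify the groups $\H^{\bt}(I)$ with the homology of $\Omega^p_{A/R}$.

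For this I would note that on the associated graded of the Hodge filtration the de Rham differential $d$ raises filtration degree and hence vanishes, so $\gr^p_F\DR(A/R) = \Omega^p_{A/R}$ equipped only with its internal differential $\delta$. Tracking the conventions of Definition \ref{DRdef}, where a class in $\Omega^p_{A/R}$ of chain degree $m$ sits in total cochain degree $p-m$, together with the shift $[n+1]$, one finds
\[
 \H^{k}(I) = \H_{p - n - 1 - k}(\Omega^p_{A/R}),
\]
so that $\H^{2-i}(I) = \H_{p+i-n-3}(\Omega^p_{A/R})$ and $\H^{1-i}(I) = \H_{p+i-n-2}(\Omega^p_{A/R})$. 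Substituting these into the exact sequence above gives precisely the stated sequence. The one step demanding any care is this degree bookkeeping — converting between the chain grading on $\Omega^p_{A/R}$ and the total cochain grading on $\DR(A/R)^{[n+1]}$, and keeping the filtration index $p$ and the shift $n+1$ consistent — which is where sign and index slips are most likely; everything else is a routine appeal to the obstruction machinery already established.
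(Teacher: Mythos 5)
Your proposal is correct and matches the paper's proof in essence: the paper proves Proposition \ref{DRobs} simply by noting it is ``the long exact sequence of cohomology'' of $0 \to \gr^p_F\DR(A/R)^{[n+1]} \to F^2\DR(A/R)^{[n+1]}/F^{p+1} \to F^2\DR(A/R)^{[n+1]}/F^{p} \to 0$ (using $\pi_j\mmc = \H^{1-j}$ for abelian DGLAs and $\gr^p_F\DR(A/R) = (\Omega^p_{A/R},\delta)$), which is exactly the alternative you flag parenthetically, while your main route via Corollary \ref{obsDGLAcor} specialises to the same sequence since the bracket is trivial. Your degree bookkeeping, $\H^{k}(I) = \H_{p-n-1-k}(\Omega^p_{A/R})$, is also correct under the paper's conventions.
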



\begin{definition}\label{Mdef}
Given a compatible pair  $(\omega, \pi) \in \Comp(A,n)/F^3$ and $p \ge 0$, define the cochain complex 
$
 M(\omega,\pi,p) 
$
to be the cocone of the map 
\begin{align*}
 &(\Omega^{p}_{A/R})^{[n-p+1]} \oplus \HHom_A(\CoS_A^{p}((\Omega^1_{A/R})_{[-n-1]}),A)^{[n+1]}\\
 &\to \HHom_A(\CoS_A^{p}((\Omega^1_{A/R})_{[-n-1]}),A)^{[n+1]}
\end{align*}
given by combining                                
\[
 \Symm^{p}(\pi^{\sharp}) \co (\Omega^{p}_{A/R})^{[-p]}\to \HHom_A(\CoS_A^{p}((\Omega^1_{A/R})_{[-n-1]}),A)
\]
with the map 
\[
 \nu(\omega, \pi) - (p-1) \co \HHom_A(\CoS_A^{p}((\Omega^1_{A/R})_{[-n-1]}),A) \to \HHom_A(\CoS_A^{p}((\Omega^1_{A/R})_{[-n-1]}),A),
\]
where                              
\[
 \nu(\omega, \pi)(b):= \nu(\omega, \pi, b).
\]
\end{definition}

Because $\omega$ and $\pi$ lie in $\gr_F^2$, the description of Definition \ref{mudef} simplifies to give 
\[
 \nu(a df_1\wedge df_2, \pi)(b)= \pm a[\pi,f_1] [b,f_2] \pm a[b,f_1] [\pi,f_2]. 
\]

\begin{lemma}\label{nondegtangent}
 If $\pi$ is non-degenerate, then the projection
\[
 M(\omega,\pi,p) \to (\Omega^{p}_{A/R})^{[n-p+1]}
\]
is a quasi-isomorphism.
\end{lemma}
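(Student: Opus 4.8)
The plan is to reduce the statement to the invertibility (up to homotopy) of a single operator built from $\nu$, and then to identify that operator with the identity using compatibility and non-degeneracy.

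First I would record an elementary fact about cocones. Write the defining map of $M(\omega,\pi,p)$ as $f=(f_X,f_Y)$, with source $X\oplus Y$ and target $Z$, where
\[
 X:=(\Omega^p_{A/R})^{[n-p+1]},\qquad Z:=\HHom_A(\CoS_A^p((\Omega^1_{A/R})_{[-n-1]}),A)^{[n+1]}=:Y,
\]
and $f_X=\Symm^p(\pi^\sharp)$, $f_Y=\nu(\omega,\pi)-(p-1)$. The cocone sits in a distinguished triangle $M(\omega,\pi,p)\to X\oplus Y\xra{f} Z$, so the long exact cohomology sequence applies. I claim the projection $M(\omega,\pi,p)\to X$ is a quasi-isomorphism as soon as $f_Y\co Y\to Z$ is one: if $(f_Y)_*$ is an isomorphism then $f_*$ is surjective, its kernel maps isomorphically onto $\H^*(X)$ under the projection (given $x$, the element $y=-(f_Y)_*^{-1}(f_X)_*x$ is the unique completion), and surjectivity forces the connecting maps to vanish, identifying $\H^*(M(\omega,\pi,p))$ with $\H^*(X)$ via $M(\omega,\pi,p)\to X$. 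Thus it suffices to show $f_Y=\nu(\omega,\pi)-(p-1)$ is a quasi-isomorphism.

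Next I would identify $\nu(\omega,\pi)$ as a symmetric power. Since $\omega$ and $\pi$ lie in $\gr_F^2$, the explicit formula for $\nu(a\,df_1\wedge df_2,\pi)(b)$ recorded after Definition \ref{Mdef} exhibits $\nu(\omega,\pi,-)$ as the $A$-linear derivation of $\Symm^p_A T$ (writing $T:=\HHom_A(\Omega^1_{A/R},A)$ for the tangent complex) extending the endomorphism
\[
 N:=\pi^\sharp\circ\omega^\sharp\co T\to T,
\]
where $\omega^\sharp$ is induced by the $2$-form part of $\omega$: contraction $[b,f_2]$ and multiplication by $[\pi,f_1]=\pi^\sharp(df_1)$ realise exactly the Leibniz rule applying $N$ in each tensor slot, so on $\Symm^p_A T$ one has $\nu(\omega,\pi,-)=D_N$.

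Then compatibility and non-degeneracy finish the argument. Because $(\omega,\pi)\in\Comp(A,n)/F^3$, Example \ref{compatex2} gives $\pi^\sharp\circ\omega^\sharp\circ\pi^\sharp\simeq\pi^\sharp$; non-degeneracy makes $\pi^\sharp$ a quasi-isomorphism of perfect complexes, hence invertible in the derived category, so $N\simeq\id_T$. Working over $\Q$ with $T$ perfect, the symmetric-power functor preserves quasi-isomorphisms, so $D_N\simeq D_{\id_T}$, and the derivation extending the identity acts on $\Symm^p_A T$ as multiplication by $p$. Therefore
\[
 f_Y=D_N-(p-1)\simeq p\cdot\id-(p-1)\cdot\id=\id,
\]
so $f_Y$ is a quasi-isomorphism, and the first step gives the claim. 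The main obstacle is the second step: one must verify that $\nu(\omega,\pi,-)$ genuinely equals the derivation $D_N$ with the correct normalisation (so that $D_{\id}$ is multiplication by $p$, not some other constant), and that the homotopy $N\simeq\id_T$ is transported through $\Symm^p$ — which is precisely where characteristic $0$ and the perfectness of $\Omega^1_{A/R}$ are used. The homological reduction of the first step and the compatibility input are then formal.
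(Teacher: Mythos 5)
Your proposal is correct and follows essentially the same route as the paper's proof: identify $\nu(\omega,\pi)$ as the derivation extending $\pi^{\sharp}\circ\omega^{\sharp}$ on generators, use compatibility together with non-degeneracy (invertibility of $\pi^{\sharp}$ up to homotopy) to conclude $\pi^{\sharp}\circ\omega^{\sharp}\simeq\id$, whence $\gr_F^p\nu(\omega,\pi)\simeq p$ and $\nu(\omega,\pi)-(p-1)$ is a quasi-isomorphism, so the cocone projects quasi-isomorphically onto $(\Omega^p_{A/R})^{[n-p+1]}$. Your explicit long-exact-sequence reduction to invertibility of $f_Y$ merely spells out what the paper treats as immediate, and the transport of the homotopy through the derivation construction is in fact linear in the homotopy, so it needs no more than what you invoke.
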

\begin{proof}
 On $\HHom_A((\Omega^1_{A/R})_{[-n-1]},A)$ (the case $p=1$), observe that the map  $\nu(\omega, \pi)$ is just given by $\pi^{\sharp} \circ \omega^{\sharp}$. Moreover, contraction with a $1$-form defines a derivation with respect to the commutative multiplication on polyvectors, so  $\nu(\omega, \pi)$ is the derivation  
\[
\nu(\omega, \pi)\co \widehat{\Pol}(A,n) \to \widehat{\Pol}(A,n)
\]
given on generators by $\pi^{\sharp} \circ \omega^{\sharp}$.

If $\pi$ is non-degenerate, then (by compatibility) $\pi^{\sharp} \circ \omega^{\sharp}$ is homotopic to the identity map, and thus $ \gr_F^p\nu(\omega, \pi)$ is homotopic to $p$. In particular, $\gr_F^p\nu(\omega, \pi) - (p-1) $ is a quasi-isomorphism in this case, so the projection
$
 M(\omega,\pi,p) \to (\Omega^{p}_{A/R})^{[n-p+1]}
$
is a quasi-isomorphism.
\end{proof}

If we apply 
Corollary \ref{obsDGLAcor}
to the small extensions $\gr^p_FL \to  L/F^{p+1}\to L/F^p$ for the DGLA of polyvectors,  and taking the long exact sequence of homotopy groups, we get:

\begin{proposition}\label{compatobs}
For each $p \ge 3$, there is a canonical long exact sequence
\[
 \xymatrix@C=1ex{ 
\vdots  \ar[d]^-{e_*} && \vdots  \ar[d]^-{e_*} &&\\ \pi_i(\Comp(A,n)/F^{p})  \ar[d]^-{o_e}&&\pi_1 (\Comp(A,n)/F^{p}) \ar[d]^-{o_e}&& \pi_0(\Comp(A,n)/F^{p}) \ar[d]^-{o_e} \\ \H^{2-i}M(\omega_2,\pi_2,p) \ar[d]&& \H^1 M(\omega_2,\pi_2,p) \ar[d] &&  \H^2 M(\omega_2,\pi_2,p) \\ \pi_{i-1}(\Comp(A,n)/F^{p+1}) \ar[d]^-{e_*}&&\pi_0(\Comp(A,n)/F^{p+1})  
\ar `d[dr]   `r[r]^{e_*}   `[uuur] `r[uuurr] [uurr] &&
\\ \vdots \ar@{-->} `r[ur]   `u[uuuurr]   [uuuurr] &&&&
}
\]
 of homotopy groups and sets, where $\pi_i$ indicates the homotopy group at basepoint $(\omega, \pi)$, and the target of the final map is understood to mean
\[                                                                                                                             
   {o_e}(\omega, \pi) \in \H^2 M(\omega_2,\pi_2,p).                                                                                                                                             \]
\end{proposition}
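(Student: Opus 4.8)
The plan is to realise the whole tower $\Comp(A,n)/F^{p+1}\to\Comp(A,n)/F^p$ as the Maurer--Cartan tower of a single filtered DGLA and then to feed its layers into Corollary \ref{obsDGLAcor}. By Definition \ref{compdef}, $\Comp(A,n)/F^p$ is the homotopy vanishing locus of the section $\mu-\sigma$, and each vertex of the defining diagram is $\mmc$ of one of the filtered DGLAs $F^2\DR(A/R)^{[n+1]}$ and $F^2\widehat{\Pol}(A,n)^{[n+1]}$ (the latter also appearing tensored with $\Q[\eps]$ for the tangent space). I would therefore first package this homotopy limit as $\mmc(L/F^p)$, where $L$ is the mapping cocone, in filtered DGLAs, of the section
\[
 \mu-\sigma \co F^2\DR(A/R)^{[n+1]}\by F^2\widehat{\Pol}(A,n)^{[n+1]} \to F^2\widehat{\Pol}(A,n)^{[n+1]};
\]
the filtration $F$ is inherited term by term, and the two structure maps preserve it by Lemma \ref{mulemma} and the remark after Definition \ref{sigmadef}. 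Since $\mu$ is only an $L_\infty$-morphism, one either invokes the evident $L_\infty$-refinement of Corollary \ref{obsDGLAcor} or replaces $\mu$ by a strict model; on the associated graded this distinction will disappear, as explained below.

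Next I would verify that each projection $L/F^{p+1}\to L/F^p$ is a small extension with kernel $\gr^p_F L$. This is immediate from the filtration estimates: on the de Rham summand the bracket is trivial, while on the polyvector summands $[F^i,F^j]\subset F^{i+j-1}$, so because $L=F^2L$ we get $[\gr^p_F L,\,L/F^{p+1}]\subset F^{p+1}/F^{p+1}=0$. Corollary \ref{obsDGLAcor} then supplies, for each Maurer--Cartan basepoint, a long exact sequence relating $\pi_\ast(\Comp(A,n)/F^{p+1})$, $\pi_\ast(\Comp(A,n)/F^p)$ and $\H^{2-\ast}(\gr^p_F L)$, where $\gr^p_F L$ carries the differential twisted by the chosen compatible pair.

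The heart of the argument, and the step I expect to be the main obstacle, is the identification of this twisted graded kernel with $M(\omega_2,\pi_2,p)$. Two simplifications need justifying. First, on $\gr^p_F$ only the \emph{linear} part of the structure maps, evaluated at the basepoint, survives: the Schouten--Nijenhuis bracket with $\pi=\sum_{i\ge 2}\pi_i$ raises the filtration, since $[\gr^i_F,\gr^p_F]\subset\gr^{i+p-1}_F$, and likewise the higher components of the $L_\infty$-morphism $\mu$ raise $F$-degree; hence modulo $F^{p+1}$ only the derivative in the $(\omega_2,\pi_2)$-directions contributes, and the $L_\infty$ versus strict issue evaporates. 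Second, the mapping-cocone structure of $L$ identifies $\gr^p_F L$ with the cocone of the linearisation of $\mu-\sigma$ on $\gr^p_F$: the $\omega$-derivative of $\mu$ is contraction, which on the weight-$p$ piece is $\Symm^p(\pi_2^{\sharp})\co(\Omega^p_{A/R})^{[n-p+1]}\to\HHom_A(\CoS_A^p((\Omega^1_{A/R})_{[-n-1]}),A)^{[n+1]}$, while the $\pi$-derivative of $\mu$ is $\nu(\omega,\pi)$ and that of $\sigma$ is multiplication by $p-1$ on $\gr^p_F$, giving $\nu(\omega,\pi)-(p-1)$. Matching the shifts, this cocone is precisely $M(\omega_2,\pi_2,p)$ of Definition \ref{Mdef}.

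Finally, substituting $\H^{2-i}(\gr^p_F L)=\H^{2-i}M(\omega_2,\pi_2,p)$ into the sequence of Corollary \ref{obsDGLAcor}, with homotopy fibre $\mmc(L/F^{p+1})=\Comp(A,n)/F^{p+1}$ and base $\mmc(L/F^p)=\Comp(A,n)/F^p$, yields the stated spiral long exact sequence, its low-degree end being the non-abelian tail of Corollary \ref{obsDGLAcor} with the $\pi_0$-obstruction landing in $\H^2M(\omega_2,\pi_2,p)$. The hypothesis $p\ge 3$ is exactly what is needed for the basepoint $(\omega,\pi)$, and hence the complexes $M(\omega_2,\pi_2,p)$, to be defined at level $\Comp(A,n)/F^3$ and to remain fixed along the whole tower.
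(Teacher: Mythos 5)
Your overall strategy is sound and its computational heart coincides with the paper's: the linearisation of $\mu-\sigma$ at the basepoint on $\gr^p_F$, with the $\omega$-derivative of $\mu$ being $\Symm^p(\pi_2^{\sharp})$, the $\pi$-derivative being $\nu(\omega_2,\pi_2)$ (only the weight-$2$ components surviving modulo $F^{p+1}$), and the derivative of $\sigma$ being $p-1$, is exactly the computation in the paper's proof, as is the observation that this cocone is $M(\omega_2,\pi_2,p)$ and that $M$ depends only on the image of the basepoint in $\Comp(A,n)/F^3$ (whence $p\ge 3$). Where you differ is in packaging: the paper never amalgamates the data into a single algebra. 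Instead it applies Proposition \ref{obsDGLA} separately to the three towers for $\cP$, $T\cP$ and $\PreSp$, pulls the resulting fibration sequences back along $(i_1,i_2)\co \Comp(A,n)/F^p \to \PreSp(A,n)/F^p \by \cP(A,n)/F^p$ to build a bundle $N$ over $\Comp(A,n)/F^p$ with a section whose homotopy vanishing locus is $\Comp(A,n)/F^{p+1}$, identifies the fibre $N_{\omega,\pi}$ with $\mmc(M(\omega_2,\pi_2,p)^{[1]})$, and concludes with the space-level long exact sequence of Proposition \ref{obsles}. Your route, by contrast, is the one the paper only gestures at after Lemma \ref{mulemma} (regarding $\mu$ as a filtered $L_{\infty}$-morphism and applying $\mmc$); it buys a cleaner one-step appeal to Corollary \ref{obsDGLAcor}, at the cost of foundations the paper deliberately avoids.

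That cost is the one genuine weak point, and you dispatch it too quickly. As written, ``the mapping cocone, in filtered DGLAs, of the section $\mu-\sigma$'' does not parse: $\mu-\sigma$ is not a morphism of DGLAs, nor even a chain map --- it is a nonlinear (polynomial in $\omega$ and $\pi$) section, so the object you need is an $L_{\infty}$-algebra on $\fg\oplus\fh^{[-1]}$ whose higher operations are the Taylor components of $\mu-\sigma$, with the $L_{\infty}$-identities resting precisely on the identities of Lemma \ref{keylemma} (this is what makes the Maurer--Cartan equation $\delta h+[\pi,h]=\mu(\omega,\pi)-\sigma(\pi)$ consistent, matching the paper's description of points of $\Comp$ as triples $(\omega,\pi,h)$). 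You then need two further statements not available in the paper: that $\mmc$ of this filtered cocone computes the homotopy vanishing locus of Definition \ref{compdef} levelwise in the tower, and an $L_{\infty}$-refinement of Corollary \ref{obsDGLAcor} for small extensions (smallness itself does hold by your filtration estimates, including for the higher operations, since all inputs lie in $F^2$ and one in $F^p$). All of this is standard in the complete filtered $L_{\infty}$ literature and the mapping-cone trick in Proposition \ref{obsDGLA} extends by setting all operations involving $I^{[1]}$ to zero, so your argument is repairable; but these steps are substantially more than the parenthetical remark you give them, and supplying them is the real content separating your proof from an assertion. One small correction: on $\gr^p_F$ the twist $[\pi,-]$ of the differential vanishes outright (as $[F^2,F^p]\subset F^{p+1}$), so the basepoint enters $M(\omega_2,\pi_2,p)$ only through the cocone's connecting map, not through a twisted differential on the graded pieces.
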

\begin{proof}
 Proposition \ref{obsDGLA} gives fibration sequences 
\begin{align*}
 \cP(A,n)/F^{p+1} \to &\cP(A,n)/F^{p} \to  \mmc (\gr_F^{p}\widehat{\Pol}(A,n)^{[n+2]} )\\
T\cP(A,n)/F^{p+1} \to &T\cP(A,n)/F^{p} \to  \mmc (\gr_F^{p}\widehat{\Pol}(A,n)^{[n+2]}[\eps] )\\
\PreSp(A,n)/F^{p+1} \to &\PreSp(A,n)/F^{p} \to  \mmc ((\Omega^{p}_{A/R})^{[n+2-p]}),
\end{align*}
for $\eps^2=0$.

We can regard these as homotopy vanishing loci for sections of trivial bundles.
Combined with 
the description of $\Comp(A,n)$ as a homotopy vanishing locus, this  
gives $\Comp(A,n)/F^{p+1}$ as a  homotopy vanishing locus  on $ \Comp(A,n)/F^{p}$. 

In more detail, we pull the sequences above back along 
\[
 (i_1, i_2) \co \Comp(A,n)/F^{p} \to  \PreSp(A,n)/F^{p} \by \cP(A,n)/F^{p},
\]
  giving  a morphism
\begin{eqnarray*}
 \Comp(A,n)/F^{p}\by \mc (\gr_F^{p}\widehat{\Pol}(A,n)^{[n+2]} \oplus (\Omega^{p}_{A/R})^{[n+2-p]})\\
 \xra{\gr^p_F(\mu-\sigma)}  \Comp(A,n)/F^{p}\by \mc(\gr_F^{p}\widehat{\Pol}(A,n)^{[n+2]} )
\end{eqnarray*}
of trivial bundles on $ \Comp(A,n)/F^{p}$. If we write $N$ for the homotopy kernel of this map, we obtain a bundle over $\Comp(A,n)/F^{p}$ equipped with a section $s$ whose homotopy vanishing locus is $\Comp(A,n)/F^{p+1} $.

It therefore remains to describe the tangents of the maps $\mu, \sigma$. Since $\sigma$ is linear on $\gr_F^{p}$, it is its own tangent, equal to $(p-1)$. To calculate the tangent of $\mu$, take $e$ with $e^2=0$, $a \in F^{p}\DR(A/R)$ and $b \in F^{p}\widehat{\Pol}(A,n)$. Then
\begin{align*}
 \mu(\omega+ ae, \pi+be) - \mu(\omega, \pi) &=   \mu(ae, \pi+be)+\mu(\omega, \pi+be) - \mu(\omega, \pi)\\
&=\mu(a,\pi)e + \nu(\omega,\pi,b)e, 
\end{align*}
because $e^2=0$.

Since terms in $F^{p+1}$ vanish, and $\mu$ preserves the filtration $F^{*+2}$, the only contributions left are
\[
\mu(a,\pi_2)e + \nu(\omega_2,\pi_2,b)e.
\]
 
At a point $(\omega, \pi)\in \Comp(A,n)/F^{p}$, the homotopy fibre $N_{\omega, \pi}$ of the bundle $N$  above is therefore just  $\mmc(M(\omega_2,\pi_2,p)^{[1]})$. 
 Then   $\Comp(A,n)/F^{p+1}$ is the homotopy vanishing locus of the obstruction map (which takes values in $N$), giving the long exact sequence of homotopy groups from Proposition \ref{obsles}.
\end{proof}

\subsection{The equivalence}

\begin{corollary}\label{compatcor1}
The canonical map
\begin{eqnarray*}
  \Comp(A,n)^{\nondeg} \to (\Comp(A,n)/F^3)^{\nondeg} \by^h_{(\PreSp(A,n)/F^3)} \PreSp(A,n)  
\end{eqnarray*}
 is a weak equivalence.
\end{corollary}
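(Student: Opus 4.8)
The plan is to prove by induction on $p \ge 3$ that the canonical map
\[
 \cE_p \co (\Comp(A,n)/F^p)^{\nondeg} \to (\Comp(A,n)/F^3)^{\nondeg} \by^h_{\PreSp(A,n)/F^3} \PreSp(A,n)/F^p
\]
is a weak equivalence, and then to pass to the homotopy limit over $p$; write $\mathrm{RHS}_p$ for the target of $\cE_p$. Here $\cE_p$ sends a compatible pair to the pair consisting of its truncation modulo $F^3$ and its underlying pre-symplectic structure, which agree over $\PreSp(A,n)/F^3$. The base case $p=3$ is immediate: the homotopy fibre product $\mathrm{RHS}_3$ collapses to $(\Comp(A,n)/F^3)^{\nondeg}$ and $\cE_3$ is the identity.

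For the inductive step I would compare the two towers fibrewise. The maps $\cE_{p+1}$ and $\cE_p$ sit in a square whose horizontal arrows are the fibrations $(\Comp(A,n)/F^{p+1})^{\nondeg} \to (\Comp(A,n)/F^p)^{\nondeg}$ and $\mathrm{RHS}_{p+1} \to \mathrm{RHS}_p$. Non-degeneracy is a condition on $\pi_2$ alone, hence is detected at level $3$ and is a union of path-components inherited consistently down the tower, so these homotopy fibres coincide with those of the corresponding maps between the full spaces. By Proposition \ref{compatobs} the homotopy fibre of the left-hand arrow over a non-degenerate point $(\omega,\pi)$ is $\mmc(M(\omega_2,\pi_2,p))$, while the homotopy fibre of the right-hand arrow is that of $\PreSp(A,n)/F^{p+1} \to \PreSp(A,n)/F^p$, namely $\mmc((\Omega^p_{A/R})^{[n-p+1]})$ from the fibration sequence in the proof of Proposition \ref{compatobs}. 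Crucially, the map induced by $\cE$ on these fibres is the one coming from the projection $M(\omega_2,\pi_2,p) \to (\Omega^p_{A/R})^{[n-p+1]}$ of Definition \ref{Mdef}, since $\cE$ retains the pre-symplectic direction and forgets the Poisson direction at each stage.

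Because $\pi$ is non-degenerate, Lemma \ref{nondegtangent} says this projection is a quasi-isomorphism, so it induces a weak equivalence of Maurer--Cartan spaces. Together with the inductive hypothesis that $\cE_p$ is a weak equivalence, comparison of the two fibration sequences forces $\cE_{p+1}$ to be a weak equivalence. Finally, $\Comp(A,n)^{\nondeg} = \holim_p (\Comp(A,n)/F^p)^{\nondeg}$, and the target of the corollary equals $\holim_p \mathrm{RHS}_p$ because homotopy limits commute and the level-$3$ factors are constant in $p$; as the towers are towers of fibrations and each $\cE_p$ is an equivalence, the induced map on homotopy limits is a weak equivalence. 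The step I expect to require the most care is verifying that the comparison map on homotopy fibres really is the projection of Definition \ref{Mdef}, so that Lemma \ref{nondegtangent} applies; the remaining bookkeeping with non-degenerate path-components is straightforward, since both $M(\omega_2,\pi_2,p)$ and the non-degeneracy condition depend only on $\omega_2$ and $\pi_2$.
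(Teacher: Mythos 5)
Your proposal is correct and takes essentially the same route as the paper: the paper's proof likewise combines the obstruction towers of Propositions \ref{DRobs} and \ref{compatobs} with the quasi-isomorphism $M(\omega_2,\pi_2,p)\to(\Omega^{p}_{A/R})^{[n-p+1]}$ of Lemma \ref{nondegtangent} to show that each map $\Comp(A,n)^{\nondeg}/F^{p+1} \to (\Comp(A,n)^{\nondeg}/F^{p})\by^h_{(\PreSp(A,n)/F^{p})}(\PreSp(A,n)/F^{p+1})$ is a weak equivalence, and then passes to the limit over $p$. Your fibrewise induction is an unwinding of this, and the step you rightly flag --- identifying the map on fibres with the projection of Definition \ref{Mdef}, including the $\pi_0$/obstruction subtleties where fibres may be empty --- is exactly what the long exact sequences of Propositions \ref{compatobs} and \ref{DRobs} encode.
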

\begin{proof}
 Lemma \ref{nondegtangent} shows that for non-degenerate $\omega_2$, the map
\begin{eqnarray*}
 M(\omega_2,\pi_2,p)\to(\Omega^{p}_{A/R})^{[n-p+1]}
\end{eqnarray*}
is a  quasi-isomorphism. Propositions  \ref{DRobs} and \ref{compatobs} thus combine to show that the maps
\begin{eqnarray*}
 \Comp(A,n)^{\nondeg}/F^{p+1} \to (\Comp(A,n)^{\nondeg}/F^{p})\by^h_{(\PreSp(A,n)/F^{p})}(\PreSp(A,n)/F^{p+1})
\end{eqnarray*}
are weak equivalences for all $p \ge 3$. We then just take the limit over all $p$.
\end{proof}

\begin{proposition}\label{level0prop}
The canonical map
 \begin{eqnarray*}
  \Comp(A,n)^{\nondeg}/F^3 &\to& \Sp(A,n)/F^3    
\end{eqnarray*}
 is a weak equivalence.
\end{proposition}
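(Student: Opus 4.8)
The plan is to prove the map is a weak equivalence by showing all its homotopy fibres are contractible. Fix a symplectic structure $\omega = \omega_2 \in \Sp(A,n)/F^3$, working over $A\ten \Omega^{\bt}(\Delta^m)$ as in Lemma \ref{mulemma} so as to obtain the statement simplicially. Unwinding the homotopy-limit description of $\Comp(A,n)/F^3$, the homotopy fibre of the projection over $\omega$ is the homotopy equaliser of $\mu(\omega,-)$ and $\sigma$ as maps $\cP(A,n)^{\nondeg}/F^3 \to T\cP(A,n)/F^3$ over $\cP(A,n)/F^3$. Since $[F^2,F^2]\subset F^3$, the bundle $T\cP(A,n)/F^3 \to \cP(A,n)/F^3$ is trivial with fibre $W$, the Dold--Kan denormalisation of $\tau^{\le 0}\gr_F^2\widehat{\Pol}(A,n)^{[n+2]}$; so this homotopy fibre is the homotopy vanishing locus (Definition \ref{vanishingdef}) of the section
\[
 s:= \mu(\omega,-)-\sigma \co \cP(A,n)^{\nondeg}/F^3 \to W.
\]
I will show $s$ has contractible homotopy vanishing locus for every symplectic $\omega$; nonemptiness then yields surjectivity on $\pi_0$, and contractibility of all fibres gives the weak equivalence.

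For nonemptiness, use that $\omega$ symplectic means $\omega^{\sharp}$ is a quasi-isomorphism with $\Omega^1_{A/R}$ perfect, so inverting $\omega^{\sharp}$ produces a non-degenerate $\pi^0 \in \cP(A,n)^{\nondeg}/F^3$ with $(\pi^0)^{\sharp}\simeq (\omega^{\sharp})^{-1}$; Example \ref{compatex2} then gives $\mu(\omega,\pi^0)\simeq\sigma(\pi^0)$, i.e. $s(\pi^0)\simeq 0$. I would base the long exact sequence of Proposition \ref{obsles} at $\pi^0$. Both $\pi_i\cP(A,n)^{\nondeg}/F^3$ (at $\pi^0$) and $\pi_iW$ compute $\H^{n+2-i}(\gr_F^2\widehat{\Pol}(A,n))$, and the map induced by $s$ on homotopy groups is its linearisation $Ds_{\pi^0}$. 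By the tangent computation used in Proposition \ref{compatobs}, this linearisation is $t\mapsto \nu(\omega,\pi^0,t)-t$, which on $\sharp$-duals reads
\[
 t^{\sharp}\mapsto t^{\sharp}\omega^{\sharp}(\pi^0)^{\sharp}+(\pi^0)^{\sharp}\omega^{\sharp}t^{\sharp}-t^{\sharp}.
\]

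The computational heart is to check that $Ds_{\pi^0}$ is a quasi-isomorphism. The non-degeneracy homotopies $(\pi^0)^{\sharp}\omega^{\sharp}\simeq \id \simeq \omega^{\sharp}(\pi^0)^{\sharp}$ organise into a chain homotopy making the displayed operator homotopic to $t^{\sharp}+t^{\sharp}-t^{\sharp}=t^{\sharp}$, hence an isomorphism on cohomology. Feeding $Ds_{\pi^0}\simeq \id$ into Proposition \ref{obsles} kills $\pi_i$ of the vanishing locus for $i\ge 1$ and reduces $\pi_0$ to the set $s^{-1}(0)\subset \pi_0\cP(A,n)^{\nondeg}/F^3$. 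This last set is a single point: if $\pi$ is non-degenerate with $s(\pi)\simeq 0$ then, exactly as in Lemma \ref{compatnondeg}, $\pi^{\sharp}\omega^{\sharp}\pi^{\sharp}\simeq\pi^{\sharp}$ with $\pi^{\sharp}$ a quasi-isomorphism forces $\pi^{\sharp}\simeq(\omega^{\sharp})^{-1}\simeq(\pi^0)^{\sharp}$, so $[\pi]=[\pi^0]$. Hence the homotopy vanishing locus is contractible, completing the argument.

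I expect the main obstacle to be the linearisation step: promoting the non-degeneracy relations to an honest chain homotopy $Ds_{\pi^0}\simeq \id$ (not merely an isomorphism on $\pi_0$), and justifying that the induced map on homotopy groups in Proposition \ref{obsles} genuinely is the linearisation of the nonlinear section $s$. The $\pi_0$-uniqueness follows directly from the Lemma \ref{compatnondeg} argument, and the identification of fibres in the first paragraph is formal, so the real work is this infinitesimal computation and its coherence.
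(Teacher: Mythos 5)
Your proof is correct, and the computational heart is the same as the paper's, but you organise it differently: you fibre the map over $\Sp(A,n)/F^3$ and prove each homotopy fibre contractible, whereas the paper never fibres over the symplectic side at all --- it runs the long exact sequence of Proposition \ref{obsles} once, on the whole of $\Comp(A,n)/F^3$ viewed as a homotopy vanishing locus over the product $\PreSp(A,n)/F^3 \by \cP(A,n)/F^3$, identifying the tangent map as $\Symm^2(\pi^{\sharp})$ combined with $\nu(\omega,\pi)-1$ and concluding $\pi_i(\Comp/F^3) \cong \pi_i(\PreSp/F^3)$ for $i>0$ directly, with the $i=0$ statement handled by the Example \ref{compatex2} argument modulo $F^3$. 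Your route is in effect the $F^3$-truncated dual of Proposition \ref{compatP1} (which fibres $\Comp^{\nondeg}$ over $\cP^{\nondeg}$), and both approaches reduce to the identical key fact that $\nu(\omega,\pi)-1$ is a quasi-isomorphism on $\gr_F^2\widehat{\Pol}(A,n)$ when $\pi^{\sharp}\circ\omega^{\sharp}\simeq \id$. What your version costs extra is the explicit base-point construction: you must produce a genuine cocycle $\pi^0$ in $\HHom_A(\CoS^2_A((\Omega^1_{A/R})_{[-n-1]}),A)$ inverting $\omega^{\sharp}$, which requires choosing a homotopy inverse (legitimate since $\Omega^1_{A/R}$ is perfect) and symmetrising it in characteristic zero; the paper's formulation absorbs this existence statement into the exact sequence plus Example \ref{compatex2}, so it never needs a preferred point. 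Conversely, your ``main obstacle'' --- promoting $(\pi^0)^{\sharp}\omega^{\sharp}\simeq\id\simeq\omega^{\sharp}(\pi^0)^{\sharp}$ to a chain homotopy $Ds_{\pi^0}\simeq\id$ --- is less delicate than you fear: composition with a fixed chain map preserves chain homotopies, so $t^{\sharp}\mapsto t^{\sharp}\omega^{\sharp}(\pi^0)^{\sharp}+(\pi^0)^{\sharp}\omega^{\sharp}t^{\sharp}-t^{\sharp}$ is homotopic to the identity by explicit formulas (after symmetrising, harmless over $\Q$); the paper's Lemma \ref{nondegtangent} packages exactly this bookkeeping more cleanly, by observing that $\nu(\omega,\pi)$ is a \emph{derivation} on polyvectors given on generators by $\pi^{\sharp}\circ\omega^{\sharp}$, hence homotopic to the weight operator $p$ on $\gr_F^p$, and you could simply cite it at that step. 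Your remaining glosses --- that the homotopy fibre of a homotopy vanishing locus is the vanishing locus of the restricted section, that the bundle $T\cP(A,n)/F^3$ is trivial since $[F^2,F^2]\subset F^3$, and that the quadratic part of $s$ contributes nothing to $\pi_i$ for $i\ge 1$ so the long exact sequence sees only the linearisation --- are all sound and are used at the same level of detail in the paper's Proposition \ref{compatobs}.
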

\begin{proof}
We may apply Proposition \ref{obsles} to the  definition of $\Comp$ as a homotopy vanishing locus, giving a long exact sequence  
\[
 \xymatrix@C=1ex
{
\vdots \ar[d] && 
 \\  \pi_i(\Comp(A,n)/F^3 ,(\omega, \pi))  \ar[d] && \pi_0(\Comp(A,n)/F^3) \ar[d]\\ \pi_i(\PreSp(A,n)/F^3, \omega) \by \pi_i( \cP(A,n)/F^3, \pi)  \ar[d]^{\mu-\sigma} &&  \pi_0(\PreSp(A,n)/F^3) \by \pi_0(\cP(A,n)/F^3) \ar[d]^{\mu-\sigma}\\  \pi_i(T\cP(A,n)/F^3 ,0)  
\ar@{-->} `d[dr]   `r[r]   `[uuur] `r[uuurr] [uurr] 
&& \pi_0(T_{\omega, \pi}\cP(A,n)/F^3) \\ 
&&
}
\]
%
of groups and sets.

We just have 
\begin{eqnarray*}
 \pi_i(\PreSp(A,n)/F^3, \omega)&=& \H_{i-n}(\Omega^{2}_{A/R})\\
\pi_i(\cP(A,n)/F^3)&=&  \H^{n+2-i}(\gr_F^2\widehat{\Pol}(A,n))\\
\pi_i(T_{\omega,\pi} \cP(A,n)/F^3)&=&  \H^{n+2-i}(\gr_F^2\widehat{\Pol}(A,n)),
\end{eqnarray*}
with the map $\mu -\sigma$ given as in  Definition \ref{Mdef} by combining  $   \Symm^{2}(\pi^{\sharp}) \co (\Omega^{2}_{A/R})_{[2]}\to \HHom_A(\CoS_A^{2}((\Omega^1_{A/R})_{[-n-1]}),A)$ with 
 the map $\nu(\omega, \pi) - 1$. 

As in Lemma \ref{nondegtangent}, when $\pi$ is non-degenerate, the map  $\gr_F^2\nu(\omega, \pi) - 1 $ is a quasi-isomorphism, inducing isomorphisms
\begin{eqnarray*}
 \pi_i(\Comp(A,n)/F^3 ,(\omega, \pi))  \cong \pi_i(\PreSp(A,n)/F^3, \omega)
\end{eqnarray*}
for all $i>0$. For $i=0$, the argument of Example \ref{compatex2} works equally well modulo $F^3$; combined with the exact sequence above, it shows that the locus
\begin{eqnarray*}
\pi_0(\Comp(A,n)/F^3)^{\nondeg} \into \pi_0(\PreSp(A,n)/F^3)= \H^n(\Omega^2_{A/R})
\end{eqnarray*}
corresponds to the non-degenerate elements  $ \pi_0(\Sp(A,n)/F^3)$.
 \end{proof}

\begin{corollary}\label{compatcor2}
The canonical maps
\begin{eqnarray*}
  \Comp(A,n)^{\nondeg} &\to& \Sp(A,n)  \\   
    \Comp(A,n)^{\nondeg} &\to&  \cP(A,n)^{\nondeg}           
\end{eqnarray*}
 are weak equivalences.
\end{corollary}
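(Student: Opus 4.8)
The second equivalence, $\Comp(A,n)^{\nondeg} \to \cP(A,n)^{\nondeg}$, is precisely the content of Proposition \ref{compatP1}, so nothing further is needed there. The plan for the first equivalence, $\Comp(A,n)^{\nondeg} \to \Sp(A,n)$ --- the canonical map sending a compatible triple $(\omega,\pi,h)$ to $\omega$, which is symplectic by Lemma \ref{compatnondeg} --- is to chain together the three preceding preparatory results.

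First I would invoke Corollary \ref{compatcor1} to replace the source by the homotopy fibre product $(\Comp(A,n)/F^3)^{\nondeg} \by^h_{(\PreSp(A,n)/F^3)} \PreSp(A,n)$, and then apply Proposition \ref{level0prop} to identify the first factor $(\Comp(A,n)/F^3)^{\nondeg}$ with $\Sp(A,n)/F^3$ over $\PreSp(A,n)/F^3$. Substituting yields a weak equivalence
\[
\Comp(A,n)^{\nondeg} \simeq (\Sp(A,n)/F^3) \by^h_{(\PreSp(A,n)/F^3)} \PreSp(A,n),
\]
natural in the projection onto the factor $\PreSp(A,n)$.

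The remaining step is to recognise the right-hand side as $\Sp(A,n)$ itself. The symplectic condition on $\omega = \sum_{i \ge 2} \omega_i$ is a condition on the leading term $\omega_2 \in \gr_F^2$ alone, hence depends only on the image of $\omega$ in $\PreSp(A,n)/F^3$; concretely, $\Sp(A,n)$ is the preimage of $\Sp(A,n)/F^3$ under the canonical projection $\PreSp(A,n) \to \PreSp(A,n)/F^3$. Since $\Sp(A,n)/F^3$ is a union of path-components of $\PreSp(A,n)/F^3$, its inclusion is homotopy monic, so the homotopy fibre product over $\PreSp(A,n)/F^3$ collapses to this strict preimage, giving $(\Sp(A,n)/F^3) \by^h_{(\PreSp(A,n)/F^3)} \PreSp(A,n) \simeq \Sp(A,n)$. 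Because every intermediate identification is compatible with projection to $\PreSp(A,n)$, the composite equivalence agrees with the canonical map $(\omega,\pi,h)\mapsto \omega$.

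I expect the one genuinely delicate point to be this last identification: one must verify that the homotopy fibre product of a map along the inclusion of a union of path-components reduces to the ordinary preimage, and that the $\nondeg$-superscripts match up correctly under the equivalence of Corollary \ref{compatcor1} (that $\pi$ non-degenerate corresponds, via compatibility, exactly to landing in the symplectic components mod $F^3$). Everything else is a purely formal consequence of Propositions \ref{compatP1} and \ref{level0prop} together with Corollary \ref{compatcor1}.
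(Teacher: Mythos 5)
Your proposal is correct and follows essentially the same route as the paper, whose proof is precisely ``combine Corollary \ref{compatcor1} with Proposition \ref{level0prop}'' for the first map and ``Proposition \ref{compatP1}'' for the second. The only addition is that you spell out the formal step the paper leaves implicit --- that the homotopy fibre product $(\Sp(A,n)/F^3) \by^h_{(\PreSp(A,n)/F^3)} \PreSp(A,n)$ collapses to $\Sp(A,n)$ because $\Sp(A,n)/F^3$ is a union of path-components and the symplectic condition depends only on $\omega_2$, i.e.\ on the image modulo $F^3$ --- and this verification is accurate.
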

\begin{proof}
 For the first map, just combine Corollary \ref{compatcor1} with Proposition \ref{level0prop}. The second map is given by Proposition \ref{compatP1}.
\end{proof}

\section{Compatible structures on derived DM stacks}\label{DMsn}

We will now study symplectic and Poisson structures on derived  $N$-stacks. Rather than studying a single CDGA as in 
  Corollary \ref{compatcor2}, will will establish a similar result for  strings of \'etale morphisms between non-degenerate structures. 
Following the philosophy of \cite{stacks2}, we can represent a derived Artin or DM $N$-stack $\fX$ as a simplicial derived affine scheme $X_{\bt}$ satisfying various conditions. Thus $X_{\bt}$ is $\Spec O(X)^{\bt}$ for a cosimplicial diagram $O(X)^{\bt}$ of CDGAs. For a derived DM stack, the morphisms in this diagram are \'etale, and descent results  will show that Poisson structures are independent of the choice of resolution $X_{\bt}$.  A modified construction will be given  for derived Artin stacks in \S \ref{Artinsn}.  

\begin{definition}
Write $dg\CAlg(\Q)$ for the category of chain complexes over $\Q$ with graded-commutative multiplication. We will refer to its objects as chain CDGAs (in contrast with the more common, but equivalent, cochain CDGAs). Given $R \in dg\CAlg(\Q) $, we write $dg\CAlg(R) $ for the category of  graded-commutative $R$-algebras in chain complexes. 
\end{definition}

Throughout this section, we   fix $R \in dg\CAlg(\Q)$. 

\subsection{Compatible structures on diagrams}\label{DMdiagramsn} 

\subsubsection{Definitions}

Given a small category $I$, an $I$-diagram $A$ of chain CDGAs over $R$, and   $A$-modules $M,N$ in $I$-diagrams of chain complexes, we can define the  cochain complex $\HHom_A(M,N)$ to be the equaliser of the obvious diagram
\[
\prod_{i\in I} \HHom_{A(i)}(M(i),N(i)) \implies \prod_{f\co i \to j \text{ in } I}   \HHom_{A(i)}(M(i),f_*N(j)).
\]
All the constructions of \S \ref{poisssn} then adapt immediately; in particular, we can define 
\[
 \widehat{\Pol}(A,n):= \HHom_A(\Co\Symm_A((\Omega^1_{A/R})_{[-n-1]}),A),
\]
leading to a space $\cP(A,n)$ of Poisson structures.

In order to ensure that this has the correct homological properties, we now consider categories  of  the form $[m]= (0 \to 1 \to \ldots \to m)$.
\begin{lemma}\label{calcTOmegalemma}
 If $A$ is an $[m]$-diagram in   chain CDGAs over $R$  which is cofibrant and  fibrant for the injective model structure (i.e. each $A(i)$ is cofibrant and the maps $A(i) \to A(i+1)$ are surjective), then $\HHom_A(\CoS_A^k\Omega^1_A,A)$ is a model for the derived $\Hom$-complex $\oR\HHom_A(\oL\CoS_A^k\oL\Omega^1_A,A)$, and $ \HHom_A(A,\Omega^p_A) \simeq \ho\Lim_i \oL\Omega^p_{A(i)}$.
\end{lemma}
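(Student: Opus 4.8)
The plan is to split the statement into two independent inputs: an \emph{objectwise} computation, valid because we work over $\Q$ and each $A(i)$ is cofibrant, and a \emph{diagrammatic} statement, namely that the strict complex $\HHom_A(-,-)$ already computes the derived diagram-$\Hom$ on the objects at hand. For the objectwise input I would recall that for a cofibrant $R$-algebra $B$ the Kähler module $\Omega^1_B$ is cofibrant and models $\oL\Omega^1_B$, and that in characteristic $0$ the functors $\CoS^k_B=((-)^{\ten_B k})^{\Sigma_k}$ and $\Lambda^p_B$ are exact on cofibrant modules and preserve them, hence compute their own left derived functors. Thus $\CoS^k_{A(i)}\Omega^1_{A(i)}$ and $\Omega^p_{A(i)}=\Lambda^p_{A(i)}\Omega^1_{A(i)}$ are cofibrant $A(i)$-modules modelling $\oL\CoS^k_{A(i)}\oL\Omega^1_{A(i)}$ and $\oL\Omega^p_{A(i)}$; since $A$ is injectively cofibrant these assemble into diagram modules $\CoS^k_A\Omega^1_A$ and $\Omega^p_A$ modelling $\oL\CoS^k_A\oL\Omega^1_A$ and the derived exterior powers.

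For the diagrammatic input I would use the injective model structure on $A$-modules in $[m]$-diagrams, whose cofibrations and weak equivalences are objectwise and whose fibrant objects are exactly those described in the statement (objectwise-fibrant diagrams with surjective structure maps). The objectwise tensor product $\ten_A$ is a left Quillen bifunctor for this structure, since a pushout-product of objectwise cofibrations is again one; therefore its adjoint internal hom $\HHom_A(-,-)$ is a right Quillen bifunctor, and $\HHom_A(M,N)$ computes $\oR\HHom_A(M,N)$ whenever $M$ is injectively cofibrant and $N$ injectively fibrant. As $\CoS^k_A\Omega^1_A$ is objectwise cofibrant, hence injectively cofibrant, and $A$ is injectively fibrant by hypothesis, this yields the first assertion. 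Concretely this says the equalizer defining $\HHom_A$ is a homotopy equalizer, which one can also verify by hand: the difference of the two restriction maps
\[
 \prod_i \HHom_{A(i)}(\CoS^k_{A(i)}\Omega^1_{A(i)},A(i)) \to \prod_i \HHom_{A(i)}(\CoS^k_{A(i)}\Omega^1_{A(i)},(f_i)_*A(i+1))
\]
is degreewise surjective, because each $\CoS^k_{A(i)}\Omega^1_{A(i)}$ is a projective graded $A(i)$-module and each structure map $A(i)\to A(i+1)$ is surjective, so a preimage is built by descending induction on $i$.

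For the second assertion the source is the free rank-one module $A$, which is injectively cofibrant, and $\HHom_A(A,-)$ is the sections functor $\Lim_{[m]}$. It then suffices to see that $\Omega^p_A$ is injectively fibrant, i.e. that each $\Omega^p_{A(i)}\to\Omega^p_{A(i+1)}$ is surjective; this follows from right-exactness of the conormal sequence for the surjection $A(i)\onto A(i+1)$, which makes $\Omega^1_{A(i)}\to\Omega^1_{A(i+1)}$ surjective, surjectivity being inherited by $\Lambda^p$. Hence $\HHom_A(A,\Omega^p_A)=\Lim_{[m]}\Omega^p_A$ computes $\oR\HHom_A(A,\Omega^p_A)=\ho\Lim_{[m]}\Omega^p_A\simeq\ho\Lim_i\oL\Omega^p_{A(i)}$, using the objectwise identification above.

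The main obstacle is the diagrammatic step: matching the strict equalizer $\HHom_A$ with the \emph{homotopy} limit. Everything hinges on the target being injectively fibrant, so the real work is checking that the surjectivity hypothesis on the structure maps of $A$ propagates to $\Omega^p_A$ through the conormal sequence, and that it forces the difference of restriction maps to be surjective; granting this, the identification of the strict Hom with the derived Hom is formal. The objectwise ingredients — cotangent complex equal to Kähler differentials, and exactness of $\CoS^k$ and $\Lambda^p$ in characteristic $0$ — are standard and should present no real difficulty.
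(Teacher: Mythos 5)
Your proof is correct, and its mathematical core coincides with the paper's. The paper computes $\oR\HHom_A(\oL\CoS_A^k\oL\Omega^1_A,A)$ as the homotopy limit of the zigzag
\[
 \HHom_{A(0)}(C(0),A(0))\to \HHom_{A(0)}(C(0),A(1)) \la \HHom_{A(1)}(C(1),A(1)) \to \cdots,
\]
with $C(i)=\CoS^k_{A(i)}\Omega^1_{A(i)}$, and observes that surjectivity of $A(i)\to A(i+1)$ together with graded-projectivity of $C(i)$ (each $A(i)$ being cofibrant) makes the forward maps surjective, so the strict limit --- which is exactly the equalizer $\HHom_A(\CoS^k_A\Omega^1_A,A)$ --- already computes the homotopy limit. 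Your ``by hand'' verification that the difference of the two restriction maps is degreewise surjective, by descending induction using lifts against $A(i)\onto A(i+1)$, is precisely this observation; your primary Quillen-bifunctor packaging is a formalization of the same fact (and note that for the shape $[m]$ your appeal to ``the injective model structure'' on modules over the diagram algebra, with fibrant objects the diagrams with surjective structure maps, is really the Reedy structure for the inverse orientation of $[m]$ --- in general injective fibrations admit no such explicit description, so this identification is doing real work in your argument and deserves a word of justification). Where you genuinely diverge is the second assertion: the paper simply uses that $[m]$ has initial object $0$, so the strict end $\HHom_A(A,\Omega^p_A)$ and the homotopy limit $\ho\Lim_i\oL\Omega^p_{A(i)}$ are both identified with $\Omega^p_{A(0)}$, since the limit functor over $[m]$ is evaluation at $0$, which preserves weak equivalences --- no fibrancy of the target is needed at all. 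Your detour through injective fibrancy of $\Omega^p_A$, checked via right-exactness of the conormal sequence and the fact that $\Lambda^p$ preserves surjections, is correct but unnecessary for this shape; its only payoff is that it would survive for index categories without an initial object, where the paper's shortcut is unavailable.
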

\begin{proof}
Because $A(i)$ is cofibrant, $\Omega^p_{A(i)}$ is cofibrant as an $A(i)$-module, so   the complex  $\oR\HHom_A(\oL\CoS_A^k\oL\Omega^1_A,A)$ is the homotopy limit of the diagram
\[
 \HHom_{A(0)}(C(0),A(0))\to  \HHom_{A(0)}(C(0),A(1)) \la \HHom_{A(1)}(C(1),A(1)) \to \ldots,
\]
where we write $C(i) := \CoS_{A(i)}^k\Omega^1_{A(i)}$.
Since the maps $A(i) \to A(i+1)$ are surjective, the maps $\HHom_{A(i)}(\CoS_{A(i)}^k\Omega^1_{A(i)},A(i))\to  \HHom_{A(i)}(\Omega^p_{A(i)},A(j))$ are so, and thus the homotopy limit is calculated by the ordinary limit, which is precisely $\HHom_A(\CoS_{A}^k\Omega^1_A,A) $.

Because $[m]$ has initial object $0$,  $ \HHom_A(A,\Omega^p_A) \cong \HHom_{A(0)}(A(0), \Omega^p_{A(0)})= \Omega^p_{A(0)}$. For the same reason, $ \ho\Lim_i \Omega^p_{A(i)}\simeq \Omega^p_{A(0)}$.
\end{proof}

For such diagrams,  we can also adapt all the constructions of \S \ref{prespsn} immediately by taking inverse limits, so setting
\[
 \PreSp(A,n):=  \PreSp(A(0),n)= \Lim_{i\in [m]}  \PreSp(A(i),n)
\]
for any $[m]$-diagram $A$ of chain CDGAs over $R$. 

We then adapt the constructions of \S \ref{compsn}, defining
\[
 \mu \co \PreSp(A,n) \by \cP(A,n) \to T\cP(A,n)
\]
by setting $\mu(\omega, \Delta)(i):= \mu(\omega(i), \Delta(i)) \in T\cP(A(i),n)$ for $i \in [m]$, and letting $ \Comp(A,n)$ be the homotopy vanishing locus of
\[
(\mu - \sigma) \co  \PreSp(A,n) \by \cP(A,n) \to  \pr_2^*T\cP(A,n).
\]

The obstruction functors and their towers from \S \ref{towersn} also adapt immediately to $[m]$-diagrams, giving the obvious analogues of the obstruction spaces defined in terms of 
\[
\HHom_A(\CoS^p_A((\Omega^1_{A/R})_{[-n-1]}),A) \quad\text{ and }\quad \Omega^p_{A(0)}.
\]

\subsubsection{Functors and descent}\label{descentsn}

\begin{lemma}\label{calcTlemma2}
If $D=(A\to B)$ is a fibrant cofibrant  $[1]$-diagram in $dg\CAlg(R)$ which is formally \'etale in the sense that the map
\[
 \Omega_{A}^1\ten_{A}B \to \Omega_{B}^1
\]
is a quasi-isomorphism, then the map    
\[
\HHom_D(\CoS_D^k\Omega^1_D,D) \to \HHom_{A}(\CoS_{A}^k\Omega^1_{A},A),
\]
is a quasi-isomorphism.
\end{lemma}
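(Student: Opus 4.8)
The plan is to recognise the source as a homotopy pullback and then reduce the claim to the single \'etale leg. Unwinding the equaliser of \S\ref{DMdiagramsn} for the $[1]$-diagram $D=(A\to B)$, the complex $\HHom_D(\CoS_D^k\Omega^1_D,D)$ is the fibre product of
\[
\HHom_A(\CoS_A^k\Omega^1_A,A)\xra{u}\HHom_A(\CoS_A^k\Omega^1_A,B)\xla{v}\HHom_B(\CoS_B^k\Omega^1_B,B),
\]
where $u$ is post-composition with $A\to B$ and $v$ is restriction of scalars along $A\to B$ followed by precomposition with the canonical map $\CoS_A^k\Omega^1_A\to\CoS_B^k\Omega^1_B$. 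Because $D$ is fibrant and cofibrant for the injective structure (so $A\to B$ is surjective and $A,B$ are cofibrant), the argument in the proof of Lemma \ref{calcTOmegalemma} shows that this strict fibre product computes the corresponding homotopy pullback, and the map of the statement is precisely the projection to the first factor.

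In any homotopy pullback the homotopy fibre of the projection to one vertex agrees with the homotopy fibre of the opposite map, so it suffices to prove that $v$ is a quasi-isomorphism. I would rewrite its target by the tensor--hom adjunction as
\[
\HHom_A(\CoS_A^k\Omega^1_A,B)\cong\HHom_B(\CoS_A^k\Omega^1_A\ten_A B,B),
\]
and then use that in characteristic $0$ the functor $\CoS^k$ is the image of the symmetriser idempotent on $\ten^k$, hence commutes with base change: $\CoS_A^k\Omega^1_A\ten_A B\cong\CoS_B^k(\Omega^1_A\ten_A B)$. Under these identifications $v$ becomes $\oR\HHom_B(-,B)$ applied to the map $\CoS_B^k(\Omega^1_A\ten_A B)\to\CoS_B^k\Omega^1_B$ induced by the canonical $\Omega^1_A\ten_A B\to\Omega^1_B$.

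The formally \'etale hypothesis says exactly that $\Omega^1_A\ten_A B\to\Omega^1_B$ is a quasi-isomorphism, and both sides are cofibrant $B$-modules (as $\Omega^1_A$ is cofibrant over $A$, the functor $-\ten_A B$ is left Quillen, and $\Omega^1_B$ is cofibrant over the cofibrant $B$). Taking $k$-th tensor powers over $B$ preserves weak equivalences between cofibrant modules, and the symmetriser idempotent cuts out a direct summand, so $\CoS_B^k(\Omega^1_A\ten_A B)\to\CoS_B^k\Omega^1_B$ is a quasi-isomorphism of cofibrant $B$-modules, which $\oR\HHom_B(-,B)$ then preserves. Thus $v$, and hence the projection, is a quasi-isomorphism. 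The only genuine bookkeeping is the cofibrancy/base-change checks that let the strict equaliser compute the homotopy pullback and let $\CoS^k$ both commute with $\ten_A B$ and preserve the equivalence; I expect the main point to get right to be precisely the identification of the diagram-$\HHom$ with a homotopy pullback via surjectivity, since once the problem is reduced to the leg $v$ the \'etale hypothesis is exactly what is needed.
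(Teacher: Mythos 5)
Your proposal is correct and takes essentially the same approach as the paper: the paper's one-line proof invokes the homotopy-limit identification of $\HHom_D(\CoS_D^k\Omega^1_D,D)$ from the proof of Lemma \ref{calcTOmegalemma} together with the quasi-isomorphism $\HHom_{B}(\CoS_{B}^k\Omega^1_{B},B)\to \HHom_{A}(\CoS_{A}^k\Omega^1_{A},B)$, which is exactly your leg $v$. Your tensor--hom adjunction, base-change of $\CoS^k$ in characteristic $0$, and cofibrancy checks simply make explicit what the paper leaves implicit.
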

\begin{proof}
 This follows immediately from the proof of Lemma \ref{calcTOmegalemma}, using the quasi-isomorphism
\[
\HHom_{B}(\CoS_{B}^k\Omega^1_{B},B)\to \HHom_{A}(\CoS_{A}^k\Omega^1_{A},B).
\]
\end{proof}
For a similar result, see \cite[Lemma 1.4.13]{CPTVV}.

\begin{definition}
 Write  $dg\CAlg(R)_{c, \onto}\subset dg\CAlg(R) $ for the subcategory with all cofibrant chain CDGAs  over $R$ as objects, and only surjective morphisms.
\end{definition}

We already have functors $\PreSp(-,n)$ and $\Sp(-,n)$ from  $dg\CAlg(R)$ to $s\Set$, the category of  simplicial sets, mapping quasi-isomorphisms in $dg\CAlg(R)_{c}$ to weak equivalences. Poisson structures are only functorial with respect to formally \'etale morphisms, in an $\infty$-functorial sense which we now make precise. 

Observe that, writing $F$ for any  of the constructions  $\cP(-,n)$, $\Comp(-,n)$, $\PreSp(-,n)$,  and the associated filtered and graded functors, applied to $[m]$-diagrams in $dg\CAlg(R)_{c, \onto}$,  we have:

\begin{properties}\phantomsection\label{Fproperties}

 \begin{enumerate}
 \item the maps from $F(A(0)\to \ldots \to A(m))$ to 
\[
 F(A(0)\to A(1))\by_{F(A(1))}^h F(A(1)\to A(2))\by^h_{F(A(2))}\ldots \by_{F(A(m-1))}^hF(A(m-1) \to A(m))
\]
 are weak equivalences;

\item if the $[1]$-diagram $A \to B$ is a quasi-isomorphism, then the natural maps from $F(A \to B)$ to $F(A)$ and to $F(B)$ are weak equivalences. 

\item if the $[1]$-diagram $A \to B$ is formally \'etale, then the natural map from $F(A \to B)$ to $F(A)$ is a  weak equivalence. 
\end{enumerate}
These properties follow from Lemmas \ref{calcTOmegalemma} and \ref{calcTlemma2}, 
 together with  the obstruction calculus of \S \ref{towersn} extended to diagrams.
\end{properties}

Property \ref{Fproperties}.1 ensures that the simplicial classes $\coprod_{ A \in B_m dg\CAlg(R)_{c, \onto}} F(A)$ fit together to give a complete Segal space $\int F$ over the nerve $Bdg\CAlg(R)_{c, \onto} $. Taking complete Segal spaces \cite[\S 6]{rezk}  as our preferred model of $\infty$-categories:

\begin{definition}\label{LintFdef}
Define $\oL dg\CAlg(R)_{c, \onto}$, $\oL dg\CAlg(R)$, $\oL\int F$, and $\oL s\Set$   to be the $\infty$-categories obtained by localising the respective 
categories at quasi-isomorphisms or weak equivalences.
\end{definition}

Property \ref{Fproperties}.2  ensures that the homotopy fibre of $\oL\int F \to  \oL dg\CAlg(R)_{c, \onto}$ over $A$ is still just the simplicial set $F(A)$, regarded as an $\infty$-groupoid.

Since the surjections in $dg\CAlg(R)$ are the fibrations, the inclusion   $\oL dg\CAlg(R)_{c, \onto} \to \oL dg\CAlg(R)$ is a weak equivalence, and we may regard $\oL\int F $ as an $\infty$-category over $\oL dgCAlg(R) $.

Furthermore,  Property \ref{Fproperties}.3 implies that the $\infty$-functor $\oL\int F \to \oL dg\CAlg(R) $ is a co-Cartesian fibration when we restrict to the $2$-sub-$\infty$-category $\oL dg\CAlg(R)^{\et} $ of homotopy formally \'etale morphisms, giving:

\begin{definition}\label{inftyFdef}
When $F$ is any of   the constructions above, we define 
\[
 \oR F \co \oL dg\CAlg(R)^{\et} \to  \oL s\Set
\]
to be the $\infty$-functor whose Grothendieck construction is $\oL\int F  $.
\end{definition}

In particular, the observations above ensure that 
\[
 (\oR F)(A) \simeq F(A)
\]
for all cofibrant chain CDGAs $A$ over $R$.

An immediate consequence of Corollary \ref{compatcor2} is that the canonical maps
\begin{eqnarray*}
  \oR\Comp(-,n)^{\nondeg} &\to& \oR\Sp(-,n)  \\   
    \oR\Comp(-,n)^{\nondeg} &\to&  \oR\cP(-,n)^{\nondeg}           
\end{eqnarray*}
 are natural weak equivalences of $\infty$-functors on $\oL dg\CAlg(R)^{\et}$. 

\subsection{Derived  \tps{$N$}{N}-hypergroupoids}\label{hgpdsn}

\subsubsection{Background}

We now require our chain CDGA $R$ over $\Q$ to be concentrated in non-negative chain degrees, and write   $dg_+\CAlg(R)\subset dg\CAlg(R) $ for the full subcategory of chain CDGAs  which are concentrated in non-negative chain degrees. We denote the opposite category to $dg_+\CAlg(R) $ by $DG^+\Aff_R$. Write $sDG^+\Aff_R$ for the category of simplicial diagrams in $DG^+\Aff_R $. A morphism in $DG^+\Aff_R $ is said to be a fibration if it is given by a cofibration in the opposite category $dg_+\CAlg(R)$ (which in turn just means that it is a retract of a quasi-free map). 

As in \cite{hag2}, a morphism $f:A \to B$ in $dg_+\CAlg(R)$ is said to be smooth (resp. \'etale) if $\H_0(f): \H_0A \to \H_0B$ is smooth, and  the maps $\H_n(A)\ten_{\H_0(A)}\H_0(B) \to \H_n(B)$ are isomorphisms for all $n$. The associated map $\Spec B \to \Spec A$ in $ DG^+\Aff_R$ is said to be surjective if $\Spec \H_0B \to \Spec \H_0A$ is so.

\begin{definition}
For $m \ge 0$, the combinatorial $m$-simplex $\Delta^m \in s\Set$ is characterised by the property that $\Hom_{s\Set}(\Delta^m, K) \cong K_m$ for all simplicial sets $K$. Its boundary $\pd\Delta^m \subset \Delta^m$ is given by $\bigcup_{i=0}^m\pd^i(\Delta^{m-1})$ (taken to include the case $\pd\Delta^0=\emptyset$), and for $m \ge 1$ the $k$th horn $\Lambda^{m,k}$ is given by $\bigcup_{\substack{0 \le i \le m\\ i \ne k}}\pd^i(\Delta^{m-1})$.
\end{definition}

\begin{definition}\label{mn}
Given a simplicial set $K$ and a simplicial object  $X_{\bt}$ in a complete category $\C$, we follow \cite[Proposition VII.1.21]{sht} in defining the $K$-matching object in $\C$ by 
$$
M_KX:= \Hom_{s\Set}(K, X).
$$
Note that for finite simplicial sets $K$, the  matching object $M_KX$ still exists even if $\C$ only contains finite limits.
\end{definition}

Explicitly, the matching object $M_{\pd \Delta^m}(X)$ is given by the equaliser of a diagram
\[
  \prod_{0\le i \le m} X_{m-1} \implies    \prod_{0\le i<j \le m} X_{m-2},    
\]
while the partial matching object $  M_{\Lambda^m_k} (X)$
 is given by the equaliser of a diagram
\[
  \prod_{\substack{0\le i \le m\\i \ne k}} X_{m-1} \implies    \prod_{\substack{0\le i<j \le m\\i,j \ne k}} X_{m-2}.     
\]

We now recall some results from \cite{stacks2}. 

\begin{definition} 
Given $Y \in sDG^+\Aff_R$, a DG Artin (resp. DM) $N$-hypergroupoid $X$ over $Y$ is a morphism $X \to Y$ in $sDG^+\Aff_R$ for which:
\begin{enumerate}
 \item the matching maps
$$
X_m \to M_{\pd \Delta^m} (X)\by_{M_{\pd \Delta^m} (Y)}Y_m 
$$
are fibrations for all $m\ge 0$;

\item  the   partial matching maps
$$
X_m \to M_{\Lambda^m_k} (X)\by_{M^h_{\Lambda^m_k} (Y)}^hY_m 
$$
are  smooth (resp. \'etale) surjections for all $m \ge 1$ and $k$, and are weak equivalences for all $m>N$ and all $k$.
\end{enumerate}

A morphism $X\to Y$ in $sDG^+\Aff_R$ is a  trivial DG Artin (resp. DM)  $N$-hypergroupoid if and only if the matching maps
$$
X_m \to M_{\pd \Delta^m} (X)\by_{M_{\pd \Delta^m} (Y)}Y_m 
$$
of Definition \ref{mn}
are surjective smooth (resp. \'etale) fibrations  for all  $m$, and are weak equivalences for all $m\ge n$.
\end{definition}

The following is \cite[Theorem \ref{stacks2-bigthm} and Corollary \ref{stacks2-Dequivcor}]{stacks2}, as spelt out in \cite[Theorem \ref{stacksintro-dbigthm}]{stacksintro}:

\begin{theorem}\label{dbigthm}
%
The $\infty$-category of strongly quasi-compact $N$-geometric derived Artin (resp. DM) stacks  over $R$ is given by localising the category  of DG Artin (resp.) $N$-hypergroupoids  over $R$  at the class of trivial relative  DG Artin (resp. DM)  $N$-hypergroupoids.
\end{theorem}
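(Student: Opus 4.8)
The statement is quoted from \cite{stacks2}, so in the present paper it is cited rather than reproved; the strategy I would follow to establish it directly is as follows. The plan is to build a realisation $\infty$-functor from hypergroupoids to derived stacks and to show that it descends to an equivalence out of the localisation. First I would send a DG Artin (resp. DM) $N$-hypergroupoid $X_{\bt} \in sDG^+\Aff_R$ to the derived stack $|X_{\bt}|$ obtained as the homotopy colimit of the associated simplicial diagram, computed in the $\infty$-topos of derived stacks for the smooth (resp. \'etale) topology. Because the matching-map fibration conditions make $X_{\bt}$ Reedy fibrant, the finite homotopy limits appearing in the Duskin-type nerve are modelled by the strict ones, so this realisation is well-behaved and, as checked below, depends on $X_{\bt}$ only through the localisation.

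The geometric core is to match the combinatorial hypergroupoid axioms with $N$-geometricity. The smoothness (resp. \'etaleness) and surjectivity of the partial matching (horn) maps say precisely that $X_0 \to |X_{\bt}|$ is a smooth (resp. \'etale) atlas, that $X_1 \to X_0 \by^h_{|X_{\bt}|} X_0$ is one, and so on up the tower; this is exactly the inductive definition of a strongly quasi-compact $N$-geometric derived Artin (resp. DM) stack. The requirement that the horn maps be weak equivalences for $m > N$ forces the relative structure to be $N$-truncated, pinning the geometric level at $N$.

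For essential surjectivity I would run this construction backwards: given such a stack $\fX$, choose a smooth (resp. \'etale) atlas $X_0 \to \fX$ by a disjoint union of derived affines, form the coskeletal nerve with $X_m$ the iterated homotopy fibre product, and verify inductively --- invoking an atlas at each geometric level --- that the result is a DG Artin (resp. DM) $N$-hypergroupoid realising $\fX$. A trivial relative hypergroupoid $X \to Y$ is an acyclic smooth (resp. \'etale) fibration, hence a hypercover whose realisation is an equivalence; in particular two atlases for $\fX$ give hypergroupoids connected by a span of trivial hypergroupoids, so the realisation functor factors through the localisation and is essentially surjective onto the stacks.

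The hard part is full faithfulness: I must show that the localised mapping spaces between hypergroupoids compute the derived mapping spaces of the realised stacks. For this I would establish that the class of trivial relative hypergroupoids admits a homotopy calculus of fractions --- stability under homotopy base change together with a Brown-style factorisation of every map into a hypergroupoid followed by a trivial one --- so that morphisms in the localisation are computed by spans $X \xla{\sim} X' \to Y$ with $X' \to X$ trivial. The substance is then to prove that every map $|X_{\bt}| \to |Y_{\bt}|$ of stacks lifts to such a span and that the space of lifts is contractible, which I would do by induction up the geometric tower using the cotangent-complex obstruction theory (the same deformation-theoretic machinery cited for stacks in \cite{stacks2} and exploited in \S\ref{towersn}). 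This lifting-and-uniqueness step, rather than the construction of the functor or essential surjectivity, is the technically demanding heart of the argument, and it is what the bulk of \cite{stacks2} is devoted to.
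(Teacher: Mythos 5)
The paper itself offers no proof of this statement: it is quoted verbatim from \cite{stacks2} (as spelt out in \cite{stacksintro}), so the only meaningful comparison is with the proof given there. Your overall architecture matches that proof well: a realisation functor $X_{\bt}\mapsto X^{\sharp}$ (homotopy colimit followed by hypersheafification), the identification of the horn-filling axioms with the inductive definition of $N$-geometricity, essential surjectivity by inductively building a nerve from atlases, the observation that trivial relative hypergroupoids are exactly hypercovers and hence realise to equivalences, and a right calculus of fractions (stability of trivial hypergroupoids under base change and composition, so that localised morphisms are spans $X \xla{\sim} X' \to Y$). All of this is the skeleton of \cite{stacks2}.

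The genuine gap is in your proposed mechanism for the hard step. You say full faithfulness should be proved ``by induction up the geometric tower using the cotangent-complex obstruction theory,'' but that is not what \cite{stacks2} does, and it is not the right tool. Cotangent-complex obstruction theory governs square-zero and infinitesimal extensions --- it is what \cite{stacks2} uses for the \emph{separate} deformation theorem cited elsewhere in the present paper --- whereas representing a map $X^{\sharp}\to Y^{\sharp}$ of hypersheaves by an actual simplicial map out of a refinement $X'\to X$ is a descent phenomenon with no infinitesimal content: there is no tower of nilpotent extensions to climb. The actual argument is a Verdier-style hypercovering/strictification argument: since $Y$ is Reedy fibrant with smooth (resp.\ \'etale) surjective partial matching maps, one lifts a map of stacks levelwise by choosing local sections of these smooth/\'etale surjections, which exist after refining by covers; assembling these refinements produces the trivial relative hypergroupoid $X'\to X$ on which the map is strictified, and the same lifting argument run relatively gives contractibility of the space of such spans. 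So while your statement of \emph{what} must be proved (lifting plus uniqueness of lifts) is correct, the tool you name would not carry it out, and substituting the hypercover-descent lifting argument is essential rather than a cosmetic change.
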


Given a  DG Artin (resp. DM)  $N$-hypergroupoid  $X$, we denote the associated $N$-geometric derived Artin (resp. DM) stack by $X^{\sharp}$.

\begin{definition}
Given  $X \in sDG^+\Aff_R$, define $cdg\Mod(X)$ to be the  category of $O(X)$-modules in cosimplicial diagrams of chain complexes. 
We say that a morphism $M \to N$ in $cdg\Mod(X)$ is a  weak equivalence 
if it
induces quasi-isomorphisms 
$M^i \to N^i$ of chain complexes 
in each cosimplicial level.
\end{definition}

The following is \cite[Proposition \ref{stacks2-qcohequiv} and Remarks \ref{stacks2-hcartrks}]{stacks2}:
\begin{proposition}\label{qcohequiv}
For a DG Artin $N$-hypergroupoid $X$,  the $\infty$-category of  quasi-coherent complexes (in the sense of \cite[\S 5.2]{lurie})
on the $n$-geometric derived stack $X^{\sharp}$ is equivalent to the localisation at weak equivalences of the full subcategory $dg\Mod(X)_{\cart}$ of $cdg\Mod(X)$ consisting of modules $M$ which are homotopy-Cartesian in the sense that   the maps
\[
 \pd^i\co \oL\pd_i^*M^{m-1} \to M^m
\]
are quasi-isomorphisms for all $i$ and $m$.
\end{proposition}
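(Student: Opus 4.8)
The plan is to deduce this from smooth hyperdescent for quasi-coherent complexes, reducing the stack $X^{\sharp}$ to its affine resolution $X_{\bt}$. By Theorem \ref{dbigthm}, $X^{\sharp}$ is the $N$-geometric derived stack presented by the simplicial derived affine $X_{\bt}$, and the hypergroupoid conditions on the partial matching maps ensure that the canonical augmentation $X_{\bt} \to X^{\sharp}$ is a smooth hypercover. Since $\QCoh$ in the sense of \cite[\S 5.2]{lurie} is defined as the limit $\Lim_{\Spec B \to X^{\sharp}} \Mod_B$ over all derived affines mapping to $X^{\sharp}$, and quasi-coherent complexes satisfy flat (hence smooth) descent, the first step is to restrict this limit to the chosen resolution, yielding an equivalence
\[
 \QCoh(X^{\sharp}) \simeq \holim_{m \in \Delta} \QCoh(X_m).
\]
The content here is a cofinality statement: the simplicial affine $X_{\bt}$ is a covering in the smooth topology, so it computes the same limit as the full affine site.

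Since each level $X_m = \Spec O(X)^m$ is a derived affine, we have $\QCoh(X_m) \simeq \cD(O(X)^m)$, the derived $\infty$-category of $O(X)^m$-modules. The second step is to identify the totalization $\holim_m \cD(O(X)^m)$ of this cosimplicial diagram of module categories with homotopy-Cartesian modules. An object of such a homotopy limit over $\Delta$ is a Cartesian section: a compatible family $\{M^m\}$ together with coherence data for which the descent condition forces the comparison maps $\oL\pd_i^* M^{m-1} \to M^m$ to be equivalences, which is exactly the homotopy-Cartesian condition cutting out $dg\Mod(X)_{\cart} \subset cdg\Mod(X)$. To make this precise at the level of models rather than abstract $\infty$-categories, one equips $cdg\Mod(X)$ with the levelwise weak equivalences and uses the standard Reedy/totalization presentation of a homotopy limit of module categories, identifying it with the localization of the Cartesian subcategory at weak equivalences; this is the content of \cite[Proposition \ref{stacks2-qcohequiv} and Remarks \ref{stacks2-hcartrks}]{stacks2}.

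The main obstacle is the first step, namely smooth hyperdescent for quasi-coherent complexes along a hypercover of a higher $N$-geometric stack. Unlike the \v{C}ech nerve of an affine cover, the hypergroupoid $X_{\bt}$ is a genuine hypercover, so one must invoke descent for the whole simplicial diagram and verify convergence of the totalization; here the truncation hypothesis in the hypergroupoid definition (partial matching maps being weak equivalences above level $N$) is what guarantees that the cosimplicial diagram is suitably coskeletal and the limit is well behaved. One must also check independence of the presentation, i.e. invariance under trivial relative DG Artin $N$-hypergroupoids, which follows once descent is established, since any two resolutions are dominated by a common refinement.
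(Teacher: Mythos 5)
The paper contains no proof of this proposition to compare against: it is imported wholesale from \cite[Proposition \ref{stacks2-qcohequiv} and Remarks \ref{stacks2-hcartrks}]{stacks2}. Your sketch reconstructs what is essentially the argument behind that citation: present $X^{\sharp}$ as the realisation $\ho\LLim_{m \in \Delta^{\op}} X_m$, use descent for $\QCoh$ to obtain $\QCoh(X^{\sharp}) \simeq \ho\Lim_m \QCoh(X_m)$, and then strictify the homotopy limit of module $\infty$-categories as coCartesian sections over $\Delta$, i.e.\ homotopy-Cartesian cosimplicial modules. The outline is sound, and you correctly isolate the one genuinely delicate point, namely that $\QCoh$ need not satisfy unbounded hyperdescent, and that the $N$-hypergroupoid truncation (partial matching maps being equivalences above level $N$) bounds the coskeletal dimension of the hypercover and reduces hyperdescent to finitely many ordinary descent steps.

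Two repairs are needed. First, your citation of \cite[Proposition \ref{stacks2-qcohequiv}]{stacks2} \emph{inside} the proof is circular: that is precisely the statement being proved. What the second step actually requires is an independent strictification theorem identifying the $\infty$-categorical limit of the cosimplicial diagram $m \mapsto \cD(O(X)^m)$ with the localisation of $cdg\Mod(X)_{\cart}$ at levelwise quasi-isomorphisms --- for instance via straightening of the module fibration over $\Delta$, whose coCartesian sections are exactly the modules with $\oL\pd_i^* M^{m-1} \to M^m$ equivalences, or via the theory of left Quillen presheaves in the style of To\"en--Vezzosi. This is a theorem, not bookkeeping, and must be proved or cited on its own. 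Second, the first step is not a cofinality statement: the inclusion of $\Delta^{\op}$ into the category of derived affines over $X^{\sharp}$ is not cofinal. Rather, $\QCoh$, being a right Kan extension from affines, sends \emph{presheaf} colimits to limits automatically; the real content is that $\QCoh$ is unchanged by passing to the hypersheafification, which is exactly descent along the smooth hypercover $X_{\bt} \to X^{\sharp}$, made available by Theorem \ref{dbigthm} and the truncation hypothesis. With those two points corrected, your argument is a valid proof and, as far as one can tell, follows the same route as the external source the paper relies on.
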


\begin{definition}\label{ulinedef}
 We make $cdg\Mod(X)$  into a simplicial category by setting (for $K \in s\Set$)
$$
(M^K)^n: = (M^n)^{K_n}, 
$$
as an $O(X)^n$-module in chain complexes. This has a left adjoint $M \mapsto M\ten K$. 

Given $M \in cdg\Mod(X)$, define $\uline{M} \in (cdg\Mod(X))^{\Delta}$ to be the cosimplicial diagram given in cosimplicial level $n$ by $M\ten \Delta^n$.
\end{definition}

Combining \cite[Definition \ref{stacks2-cotdef} and Corollary \ref{stacks2-loopcot}]{stacks2} gives:
\begin{definition}\label{cotdef}
For a DG Artin $N$-hypergroupoid $X$,  define the cotangent complex $\bL^{X/S} \in cdg\Mod(X)$ by  $\bL^{X/S}:=  \Tot N_c^{\le N}\underline{\Omega(X/S)}$, where $N_c$ denotes cosimplicial conormalisation, and $\Tot$ is the direct sum total functor from cochain chain complexes to chain complexes.
\end{definition}

By \cite[Lemma \ref{stacks2-Lcart} and Corollary \ref{stacks2-cotgood}]{stacks2}, this is homotopy-Cartesian and   recovers the usual cotangent complexes in derived algebraic geometry under the correspondence of Proposition \ref{qcohequiv}.

\subsubsection{Poisson and symplectic structures}

Take a DG Deligne--Mumford  $N$-hypergroupoid $X$ over $R$. Because the face maps $\pd^i \co  \Delta^{m-1} \to \Delta^m$  are weak equivalences, the morphisms $\pd_i\co  X_m \to X_{m-1}$ are all \'etale. Since $\sigma_i \co X_{m-1} \to X_m$ has left inverse $\pd_i$, it follows that $\sigma_i$ is also \'etale. 

Therefore $O(X)$ is a functor from $\Delta^{\op}$ to  $dg\CAlg(R)^{\et}$. 

\begin{definition}\label{inftyFXdef}
 For any of the functors $F$ in  Definition \ref{inftyFdef}, write 
\[
 F(X):= \ho\Lim_{j \in \Delta} \oR F(O(X_j)).
\]
\end{definition}
This gives us spaces
$\cP(X,n)$, $\PreSp(X,n)$ and $\Comp(X,n)$
 of Poisson structures, pre-symplectic structures and compatible pairs.
Explicitly, strictification theorems then  imply  that an element of $  \cP(X,n)$ can be represented by a cosimplicial diagram $P$ of $\hat{P}_{n+1}$-algebras equipped with a weak equivalence from $O(X)$ to the cosimplicial chain CDGA underlying $P$. 

\begin{proposition}\label{inftyFXwell}
If $Y \to X$ is a trivial DG DM hypergroupoid, then the morphism 
\[
 F(X) \to F(Y)
\]
is an equivalence for any of the constructions $F= \cP, \Comp, \PreSp$.
\end{proposition}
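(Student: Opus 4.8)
The plan is to reduce the claim to a statement about the graded pieces of the obstruction towers of \S\ref{towersn}, and then to recognise those pieces as derived global sections of complexes built solely from the structure sheaf and the cotangent complex, which a trivial hypergroupoid preserves. First I would note that the morphism $F(X)\to F(Y)$ is genuinely defined: for a trivial DG DM hypergroupoid the levelwise maps $O(X_m)\to O(Y_m)$ are themselves formally \'etale (one checks inductively that matching objects of \'etale diagrams are \'etale, starting from the case $m=0$, where $M_{\pd\Delta^0}=\ast$ forces $Y_0\to X_0$ to be a surjective \'etale fibration), so $\oR F$ of Definition \ref{inftyFdef} applies to each level and $\ho\Lim_{\Delta}$ produces the comparison. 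Since homotopy limits commute with the formation of the towers $F/F^{p+1}\to F/F^p$ and with their fibration sequences, it suffices to prove that $Y\to X$ induces a weak equivalence on every graded layer.

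By Propositions \ref{DRobs} and \ref{compatobs}, together with the obstruction calculus of \S\ref{towersn} extended to diagrams in \S\ref{DMdiagramsn}, the graded layers are governed by
\[
 \ho\Lim_{j\in\Delta}\,(\Omega^p_{O(X_j)})^{[\,\cdot\,]}\qquad\text{and}\qquad \ho\Lim_{j\in\Delta}\,\HHom_{O(X_j)}\bigl(\CoS^p_{O(X_j)}(\Omega^1_{O(X_j)})_{[-n-1]},O(X_j)\bigr)^{[\,\cdot\,]}.
\]
By Lemma \ref{calcTOmegalemma} each internal term is already a derived object, and the homotopy limits compute $\oR\Gamma$ of $\Lambda^p\bL^{X/R}$ and of $\CoS^p\bT^{X/R}$ on $X^{\sharp}$ respectively. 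In particular every layer depends on $X$ only through the pair $(X^{\sharp},\bL^{X/R})$, via the homotopy-Cartesian module data of Proposition \ref{qcohequiv} and the cotangent complex of Definition \ref{cotdef}.

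I would then invoke invariance of this data. By Theorem \ref{dbigthm} the trivial hypergroupoid $Y\to X$ is inverted on passing to derived stacks, giving an equivalence $Y^{\sharp}\xrightarrow{\sim}X^{\sharp}$; by Proposition \ref{qcohequiv} and Definition \ref{cotdef} this equivalence matches the categories of homotopy-Cartesian modules and carries $\bL^{Y/R}$ to $\bL^{X/R}$ compatibly with derived global sections. Hence the de Rham and polyvector layers displayed above are preserved, so $F(X)\to F(Y)$ is a weak equivalence on each graded piece and therefore on the whole tower. For $F=\PreSp$ this is the entire claim; for $F=\cP$ and $F=\Comp$ one propagates the layerwise equivalences through the long exact sequences of Proposition \ref{compatobs}.

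The step I expect to be the main obstacle is the polyvector layer. Whereas the de Rham layer is covariant in $\bL$, the polyvector layer is a derived $\HHom$ out of $\CoS^p$ of the cotangent complex, so its invariance is not formal from an equivalence of stacks alone: it requires $\bL^{X/R}$ to be homotopy-Cartesian and dualisable, which is precisely what allows Lemma \ref{calcTOmegalemma} to identify the levelwise $\HHom$ with a derived $\HHom$ and lets $\HHom$ commute with $\ho\Lim_{\Delta}$ and with the quasi-coherent equivalence of Proposition \ref{qcohequiv}. Verifying that the homotopy-Cartesian property survives the trivial hypergroupoid, and that it suffices to control these $\HHom$-complexes, is where the genuine work lies; the compatibility of the towers with $\ho\Lim_{\Delta}$ is then routine bookkeeping.
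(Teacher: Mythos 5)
Your proposal is correct and takes essentially the same route as the paper: \'etaleness of the levelwise maps $Y_j \to X_j$ gives the comparison morphism, the obstruction towers of Propositions \ref{DRobs} and \ref{compatobs} reduce the claim to the graded layers, and these are identified with $\oR\Gamma(\fX, \oL\Omega^p_{\fX/R})$ and $\oR\HHom_{\O_{\fX}}(\CoS^p((\bL_{\fX/R})_{[-n-1]}), \O_{\fX})$, which are invariant under $Y^{\sharp}\simeq X^{\sharp}$ by descent. Your only overstatement is the claimed need for dualisability of $\bL^{X/R}$ in the polyvector layer: since the paper's polyvectors are multiderivations rather than symmetric powers of the tangent complex, descent of $\oR\HHom$ into $\O$ is formal from Proposition \ref{qcohequiv} once the module is homotopy-Cartesian, and the paper accordingly invokes only faithfully flat descent.
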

\begin{proof}
The morphism $F(X) \to F(Y)$ exists because the maps $Y_j \to X_j$ are all \'etale. By Propositions \ref{DRobs}, \ref{compatobs} and the corresponding statement for $\cP$, it suffices to prove that the morphisms
\begin{align*}
\Omega^p_{X_j/R} &\to \Omega^p_{Y_j/R},\\ 
 \HHom_{O(X_j)}(\CoS^p((\Omega^1_{X_j/R})_{[-n-1]}), O(X_j))  &\to  \HHom_{O(Y_j)}(\CoS^p((\Omega^1_{Y_j/R})_{[-n-1]}), O(Y_j)) 
\end{align*}
induce quasi-isomorphisms on the homotopy limits over $j \in \Delta$. 
This follows by faithfully flat descent for quasi-coherent sheaves, since $\fX :=X^{\sharp} \simeq Y^{\sharp}$, the expressions reducing to $\oR \Gamma(\fX, \oL\Omega^p_{\fX/R})$ and $\oR\HHom_{\O_{\fX}}(\CoS^p((\bL_{\fX/R})_{[-n-1]}), \O_{\fX}) $, respectively. 
 \end{proof}

Thus the following is well-defined:
\begin{definition}\label{DMFdef}
 Given a strongly quasi-compact DG DM $N$-stack $\fX$,  define the spaces  $\cP(\fX,n)$, $\Comp(\fX,n)$, $\Sp(\fX,n)$  to be  the spaces
$
 \cP(X,n), \Comp(X,n), \Sp(X,n)
$
for any DG DM $N$-hypergroupoid $X$ with $X^{\sharp} \simeq \fX$.
\end{definition}

By the results of  \S \ref{descentsn}, an immediate consequence of Corollary \ref{compatcor2} is then: 
  \begin{theorem}\label{DMthm}
 There are natural weak equivalences
\[
 \Sp(\fX,n) \la \Comp(\fX,n)^{\nondeg}\to \cP(\fX,n)^{\nondeg}.
\]
\end{theorem}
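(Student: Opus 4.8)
The plan is to reduce the theorem to the affine equivalence of Corollary~\ref{compatcor2}, in its promoted $\infty$-functorial form from \S\ref{descentsn}, and then to pass to the homotopy limit defining the stacky spaces. First I would fix a DG DM $N$-hypergroupoid $X$ with $X^{\sharp}\simeq \fX$, so that by Definition~\ref{DMFdef} it suffices to produce the two weak equivalences with $X$ in place of $\fX$. Recall from the discussion preceding Definition~\ref{inftyFXdef} that, because $X$ is a \emph{DM} hypergroupoid, all face and degeneracy maps $\pd_i,\sigma_i$ are \'etale, so $O(X)$ is a genuine functor $\Delta^{\op}\to dg\CAlg(R)^{\et}$, equivalently a diagram $O(X)\co \Delta\to \oL dg\CAlg(R)^{\et}$. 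By Definition~\ref{inftyFXdef} each of the three spaces in the theorem is the homotopy limit $\ho\Lim_{j\in\Delta}$ of the composite of this diagram with the respective $\infty$-functor $\oR\Comp(-,n)^{\nondeg}$, $\oR\Sp(-,n)$, or $\oR\cP(-,n)^{\nondeg}$, and $(\oR F)(O(X_j))\simeq F(O(X_j))$ for each $j$.

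The key input is the concluding observation of \S\ref{descentsn}: Corollary~\ref{compatcor2} together with the descent formalism of Properties~\ref{Fproperties} shows that
\[
 \oR\Sp(-,n) \la \oR\Comp(-,n)^{\nondeg} \to \oR\cP(-,n)^{\nondeg}
\]
are natural weak equivalences of $\infty$-functors on $\oL dg\CAlg(R)^{\et}$. Composing these natural equivalences with the diagram $O(X)$ yields, for each $j\in\Delta$, weak equivalences $\oR\Sp(O(X_j),n)\la \oR\Comp(O(X_j),n)^{\nondeg}\to \oR\cP(O(X_j),n)^{\nondeg}$ compatible with the cosimplicial structure maps. Since $\ho\Lim$ is a homotopy functor, carrying objectwise weak equivalences of $\Delta$-diagrams to weak equivalences, applying $\ho\Lim_{j\in\Delta}$ produces the desired weak equivalences $\Sp(X,n)\la \Comp(X,n)^{\nondeg}\to \cP(X,n)^{\nondeg}$.

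Two points require care, and I expect the second to be the real subtlety. First, well-definedness: I must check the construction is independent of the chosen hypergroupoid $X$, which is exactly Proposition~\ref{inftyFXwell} (invariance under trivial DM hypergroupoids) combined with Theorem~\ref{dbigthm}, presenting strongly quasi-compact derived DM $N$-stacks as the localisation at such trivial hypergroupoids; naturality of the equivalences then guarantees they descend to invariants of $\fX$. Second, and more delicate, is the interaction of the non-degeneracy locus with the homotopy limit, since a priori $\ho\Lim_j$ need not commute with passing to a union of path components. Here I would argue that non-degeneracy is detected by $\pi_2^{\sharp}$ (resp.\ $\omega_2^{\sharp}$) being a quasi-isomorphism with $\Omega^1$ perfect, a condition both preserved and reflected by the \'etale transition maps of $O(X)$, by the faithfully flat descent used in Proposition~\ref{inftyFXwell}. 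Hence a global section of $\ho\Lim_j \oR\cP(O(X_j),n)$ is non-degenerate if and only if its restriction to $X_0$ is, so the non-degenerate locus is genuinely a union of path-components of the homotopy limit; the levelwise argument of the previous paragraph then applies verbatim to the non-degenerate sublocus, and likewise to $\Sp=\PreSp^{\nondeg}$, completing the proof.
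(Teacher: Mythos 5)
Your proposal is correct and is essentially the paper's own argument: the paper proves Theorem \ref{DMthm} exactly by combining Corollary \ref{compatcor2}, promoted at the end of \S\ref{descentsn} to natural weak equivalences of $\infty$-functors on $\oL dg\CAlg(R)^{\et}$, with the homotopy-limit definition of $F(X)$ in Definition \ref{inftyFXdef}, the resolution-independence of Proposition \ref{inftyFXwell}, and Definition \ref{DMFdef}. Your closing verification that the non-degenerate locus interacts correctly with $\ho\Lim_{j\in\Delta}$ --- it is detected on $X_0$ because the transition maps are \'etale and the face maps split the degeneracies, so it really is a union of path-components of the limit --- is a point the paper leaves implicit in its ``immediate consequence'' phrasing, and you have supplied it correctly.
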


\section{Compatible structures on derived Artin stacks}\label{Artinsn}

We now show how to extend the comparisons above to derived Artin $N$-stacks over a fixed chain CDGA $R \in dg_+\CAlg(\Q)$. The main difficulty is in establishing an appropriate notion for shifted Poisson structures in this setting. It is tempting just to consider DG Artin $N$-hypergroupoids equipped with Poisson structures levelwise, with weak equivalences generated by trivial DG Artin $N$-hypergroupoids. With this approach, it is not easy to compute the obstructions arising, and it seems unlikely that they will be  related to the cotangent complex in the desired way. 

\begin{remark}
 By contrast, deformations of a DG Artin hypergroupoid $X$ do recover all deformations of the associated derived Artin stack, at least on the level of objects (cf. \cite[Theorem \ref{stacks2-deformstack}]{stacks2}). The key in that case was the comparison between  $\Omega^1(X/R)$ and the cotangent complex $\bL_{X/R}$ in \cite[Lemma \ref{stacks2-deform2}]{stacks2}, which relies on strong boundedness properties. The shifts involved in the definition of Poisson structures  preclude  any analogous result for the obstruction tower associated to a semi-strict Poisson structure.  
\end{remark}

\subsection{Stacky CDGAs}\label{stackyCDGAsn}

Our  solution is to replace   Artin $N$-hypergroupoids in  CDGAs  with DM hypergroupoids in a suitable category 
of  graded-commutative algebras in double complexes, which we refer to as stacky CDGAs.
 The idea is to globalise one of the intermediate steps \cite[Theorem \ref{ddt1-dequiv}]{ddt1} in the proof 
of the equivalence between formal moduli problems and DGLAs.

Given a stacky CDGA $A$, the definition of an $n$-shifted Poisson structure is fairly obvious: it is a Lie bracket of total cochain degree $-n$,
 or rather an $L_{\infty}$-structure in the form of a  sequence $[-]_m$ of $m$-ary operations of cochain degree $1-(n+1)(m-1)$. However, the precise formulation  (Definition \ref{bipoldef}) is quite subtle, involving  lower bounds on the cochain degrees of the operations.


 When working with stacky CDGAs, we write the double complexes as chain cochain complexes, enabling us to distinguish easily between derived (chain) and stacky (cochain) structures: 
\begin{definition}
 Define a chain cochain complex $V$ over $\Q$ to be  a bigraded $\Q$-vector space $V= \bigoplus_{i,j}V^i_j$, equipped with square-zero linear maps $\pd \co V^i_j \to V^{i+1}_j$ and $\delta \co V^i_j \to V^i_{j-1}$ such that $\pd\delta + \delta \pd =0$.

There is an obvious tensor product operation $\ten$ on this category, and a stacky CDGA is then defined to be a chain cochain complex $A$ equipped with a commutative product $A\ten A \to A$ and unit $\Q \to A$. 

We  regard all chain complexes as   chain cochain  complexes $V= V^0_{\bt}$.
Given a chain  CDGA $R$, a stacky CDGA over $R$ is then a morphism $R \to A$ of stacky CDGAs. We write $DGdg\CAlg(R)$ for the category of  stacky CDGAs over $R$, and $DG^+dg\CAlg(R)$ for the full subcategory consisting of objects $A$ concentrated in non-negative cochain degrees.
\end{definition}

\begin{definition}
 Say that a morphism $U \to V$ of chain cochain complexes is a levelwise quasi-isomorphism if $U^i \to V^i$ is a quasi-isomorphism for all $i \in \Z$. Say that a morphism of stacky CDGAs is a levelwise quasi-isomorphism if the underlying morphism of chain cochain complexes is so.
\end{definition}

\begin{lemma}\label{bicdgamodel}
There is a cofibrantly generated model structure on stacky CDGAs over $R$ in which fibrations are surjections and weak equivalences are levelwise quasi-isomorphisms. 
\end{lemma}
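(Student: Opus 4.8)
The plan is to construct the model structure by transferring it along the forgetful–free adjunction from the underlying category of chain cochain complexes over $R$, where the analogous model structure is standard. First I would fix a cofibrantly generated model structure on the category of $R$-modules in chain cochain complexes in which the weak equivalences are the levelwise quasi-isomorphisms and the fibrations are the levelwise surjections; this is the projective model structure for the cochain grading combined with the standard model structure on $R$-modules in chain complexes in each cochain level. Its generating cofibrations $I$ and generating trivial cofibrations $J$ are the evident maps built from disc and sphere objects placed in each bidegree (shifting generators for chain complexes of $R$-modules into each fixed cochain degree $i$).

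Next I would apply the standard transfer principle (as in the Schwede--Shipley or Crans version of Quillen's theorem, e.g.\ via \cite{hinstack} or the general machinery used throughout for $L_\infty$- and CDGA-model structures) along the free graded-commutative algebra functor $\Sym_R$, which is left adjoint to the forgetful functor $DGdg\CAlg(R) \to R\text{-Mod in chain cochain complexes}$. The transferred structure declares a morphism of stacky CDGAs to be a weak equivalence (resp.\ fibration) exactly when its underlying morphism of chain cochain complexes is a levelwise quasi-isomorphism (resp.\ levelwise surjection), which is precisely the statement we want. The generating (trivial) cofibrations of the transferred structure are $\Sym_R(I)$ and $\Sym_R(J)$.

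To invoke transfer I must verify two hypotheses: that the relevant transfinite smallness/cofibrant-generation conditions hold (routine, since everything is built from $\Q$-vector spaces and the categories are locally presentable), and the key acyclicity condition, namely that every relative $\Sym_R(J)$-cell complex is a levelwise quasi-isomorphism. The main obstacle is this acyclicity condition. The standard route is to show that pushouts of free maps on generating trivial cofibrations, and transfinite composites thereof, remain levelwise quasi-isomorphisms; over $\Q$ this is where the characteristic-zero hypothesis is essential, since taking symmetric powers (invariants under $\Sigma_p$) is exact and commutes with the relevant colimits, so that a free extension along a levelwise acyclic cofibration of modules induces a levelwise acyclic extension of algebras. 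Concretely I would use that $\Sym_R(M) = \bigoplus_p (M^{\otimes_R p})_{\Sigma_p}$, that $-\otimes_{\Q} -$ preserves levelwise quasi-isomorphisms of chain cochain complexes of $\Q$-vector spaces (via the Künneth theorem, available since we work over a field), and that coinvariants by a finite group are exact over $\Q$; combining these shows a pushout of $\Sym_R(j)$ for $j \in J$ is a levelwise quasi-isomorphism, and filtered-colimit/retract arguments then extend this to all of $\Sym_R(J)\text{-cell}$.

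One subtlety worth flagging is the interaction of the two differentials $\pd$ and $\delta$: since $\pd\delta+\delta\pd=0$ and the total object is a genuine double (bi)complex, one must check that the model structure being transferred really is the one with levelwise (i.e.\ fixed cochain degree) weak equivalences rather than total-complex quasi-isomorphisms. This is handled by performing the transfer in the cochain-projective model structure, where $\delta$ supplies the differential within each fixed cochain level $i$ and $\pd$ is treated as part of the diagram structure over the cochain grading; the levelwise notion is exactly what the projective structure in the $\pd$-direction yields, so no total-complex condition is imposed. With acyclicity established, the transfer theorem produces the cofibrantly generated model structure with the stated fibrations and weak equivalences, completing the proof.
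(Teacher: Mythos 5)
Your proposal is correct and follows essentially the same route as the paper: the paper also first equips chain cochain complexes with a cofibrantly generated model structure (via explicit generating sets $\{D_i\ten S^j \to D_i\ten D^j\}$ and $\{0 \to D_i\ten D^j\}$ and Hovey's recognition theorem) and then transfers it along the free algebra functor $V \mapsto R\ten \Symm_{\Q}V$ using Hirschhorn's Theorem 11.3.2. The only difference is presentational: you spell out the characteristic-zero acyclicity check (K\"unneth plus exactness of $\Sigma_p$-coinvariants over $\Q$) that the paper leaves implicit in its citation of the transfer theorem.
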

\begin{proof}
We first prove the corresponding statement for  chain cochain complexes over $\Q$. Let $D^i$ be the chain complex $k_{[-i]} \to k_{[1-i]}$ and $S^i:= k_{[-i]}$, so  there is an obvious map $S^i \to D^i$. Write $D_i$ for the cochain complex $k^{[1-i]} \to k^{[-i]}$.  We can then  take the  set $I$ of generating cofibrations to consist of the morphisms 
\[
 \{D_i\ten S^j \to D_i\ten D^j\}_{i,j \in \Z},
\]
with the set $J$ of generating trivial cofibrations given by 
\[
  \{0 \to D_i\ten D^j\}_{i,j \in \Z}.
\]

It is straightforward to verify that this satisfies the conditions of \cite[Theorem 2.1.19]{hovey}. The forgetful functor from stacky CDGAs to chain cochain complexes has a left adjoint $V \mapsto R\ten \Symm_{\Q}V$, which transfers the model structure by \cite[Theorem 11.3.2]{Hirschhorn}. 
\end{proof}

As described for instance in  \cite[Definition \ref{ddt1-nabla}]{ddt1}, there is a denormalisation functor $D$ from non-negatively graded cochain CDGAs to cosimplicial algebras, with 
 left adjoint $D^*$. 
Given a cosimplicial chain CDGA $A$, $D^*A$ is then a stacky CDGA, with $ (D^*A)^i_j=0$ for $i<0$. The functor $D$ satisfies $(DB)^m \cong \bigoplus_{i=0}^m (B^i)^{\binom{m}{i}}$, with multiplication coming from  the shuffle product, so in particular the iterated codegeneracy map $(DB)^m \to B^0$ is always an $m$-nilpotent extension. As a result, the left adjoint $D^*$ factors through the functor sending $A$ to its pro-nilpotent completion over $A^0$. 


\begin{lemma}\label{Dstarlemma}
The  functor $D^*$  
is a left Quillen functor from the Reedy model structure on cosimplicial chain CDGAs to the model structure of Lemma \ref{bicdgamodel}. 
\end{lemma}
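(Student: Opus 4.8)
The plan is to verify that the right adjoint $D$ is a right Quillen functor, i.e.\ that it preserves fibrations and trivial fibrations; by the standard criterion for a Quillen adjunction this is equivalent to $D^*$ being left Quillen. Recall that in the model structure of Lemma~\ref{bicdgamodel} the fibrations are the surjections and the trivial fibrations are the surjective levelwise quasi-isomorphisms, while in the Reedy model structure on cosimplicial chain CDGAs a map $C \to C'$ is a fibration (resp.\ trivial fibration) precisely when each relative matching map $C^m \to M^m C \by_{M^m C'} C'^m$ is a surjection (resp.\ a surjective quasi-isomorphism) of chain CDGAs, the model structure on $dg\CAlg(R)$ over $\Q$ having the surjections as fibrations.

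The key point is to compute the matching objects of a denormalisation. Using the description $(DB)^m \cong \bigoplus_{i=0}^m (B^i)^{\binom{m}{i}}$, the additivity and naturality of the Dold--Kan correspondence show that in the cosimplicial (equivalently cochain) direction the matching map $(DB)^m \to M^m(DB)$ is a naturally split surjection whose kernel is the cosimplicial conormalisation $N_c^m(DB) = B^m$. Since all coface and codegeneracy maps commute with the chain differential $\delta$, this splitting lives in the category of chain complexes, so that $(DB)^m \cong B^m \oplus M^m(DB)$ naturally in $B$ as chain complexes.

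Given a map $A \to B$ of (non-negatively graded) stacky CDGAs, on which $D$ is defined, I would feed these splittings into the relative matching map for $DA \to DB$. The fibre product $M^m(DA) \by_{M^m(DB)} (DB)^m$ is then identified, as a chain complex, with $B^m \oplus M^m(DA)$, and under this identification the relative matching map becomes $(A^m \to B^m) \oplus \id_{M^m(DA)}$. Consequently it is a surjection exactly when $A^m \to B^m$ is surjective in each chain degree, and a surjective quasi-isomorphism exactly when $A^m \to B^m$ is one. Since surjections and levelwise quasi-isomorphisms of stacky CDGAs are precisely the maps that are such in every bidegree, $D$ carries fibrations to Reedy fibrations and trivial fibrations to Reedy trivial fibrations, as required.

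The main obstacle is the identification in the second paragraph: one must check that the Dold--Kan splitting of the matching map off the conormalisation is natural and additive enough to descend to the underlying chain complexes, carrying along the internal differential $\delta$, and that detecting surjectivity and quasi-isomorphism on these underlying chain complexes suffices even though the splitting need not respect the multiplicative structure. Everything else is the formal Quillen-adjunction criterion together with the bookkeeping of the two model structures.
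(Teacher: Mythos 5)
Your proposal is correct and follows essentially the same route as the paper: both verify via the Quillen-adjunction criterion that the right adjoint $D$ preserves fibrations and trivial fibrations, and your explicit relative-matching-map computation via the dual Dold--Kan splitting $(DB)^m \cong B^m \oplus M^m(DB)$ is exactly a proof of the standard fact the paper simply invokes, namely that Reedy fibrations and weak equivalences of cosimplicial chain CDGAs are precisely the levelwise surjections and levelwise quasi-isomorphisms. The one point to add is that the model structure of Lemma \ref{bicdgamodel} lives on stacky CDGAs with possibly unbounded cochain degrees, so $D$ must first be extended by composing with the brutal truncation $\sigma^{\ge 0}$ in the cochain direction (as the paper notes); since $\sigma^{\ge 0}$ preserves surjections and levelwise quasi-isomorphisms, your argument goes through unchanged.
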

\begin{proof}
The denormalisation functor $D$ on non-negatively graded cochain complexes extends to all cochain complexes by composing with brutal truncation. The right adjoint to $D^*$ is given by applying $D$ to the cochain index of a  stacky CDGA $A$. Since fibrations and weak equivalences in the Reedy model structure are levelwise surjections and levelwise quasi-isomorphisms, it follows immediately that $D$ is right Quillen, so $D^*$ is left Quillen.
\end{proof}

To any  DG Artin  $N$-hypergroupoid $X$ over $R$, we can then associate the stacky CDGA $D^*O(X)$. This behaves well because DG Artin hypergroupoids are Reedy fibrant, so there is no need to replace $D^*$ with an associated left-derived functor. Because $DA$ is always a nilpotent extension of $A^0$, we cannot recover $X$ from the levelwise quasi-isomorphism class of $D^*O(X)$, but we can recover $X_0$ and the formal completion of $X$ along $X_0$. Thus $D^*O(X)$ will play the same role as the formal affine derived stacks of \cite{CPTVV}. 
%

For any DG Artin  $N$-hypergroupoid $X$, the diagram $X^{\Delta^m}$ is another   DG Artin  $N$-hypergroupoid resolving the same stack, and we will then consider the cosimplicial stacky CDGA $m \mapsto D^*O(X^{\Delta^m})$, which we can think of as a kind of  DM $N$-hypergroupoid in stacky CDGAs.

\begin{example}\label{DstarBG}
If  $Y$ is a derived affine scheme equipped with an action of a smooth affine group scheme $G$, then the nerve $X:=B[Y/G]$ is a hypergroupoid resolving  the derived Artin $1$-stack $[Y/G]$. Since  $B[Y/G]_i = Y \by G^i$, the simplicial derived scheme $X^{\Delta^m}$ is given  by  $B[Y \by G^m/G^{m+1}]$, with action  
\[
 (y,h_1, \ldots,h_m)(g_0, \ldots,g_m)= (y g_0, g_0^{-1}h_1g_1,g_1^{-1}h_2g_2, \ldots, g_{m-1}^{-1}h_mg_m).
\]

Taking Reedy fibrant replacement of $X$ gives us a DG Artin hypergroupoid $X'$ and a levelwise quasi-isomorphism $X \to X'$. Since  the face maps of
$X$ are smooth, the map $D^*O((X')^{\Delta^{\bt}}) \to D^*O(X^{\Delta^{\bt}})$ is a levelwise quasi-isomorphism of cosimplicial stacky CDGAs. Thus Poisson structures on the stack $[Y/G]$ can be defined in terms of polyvectors on the cosimplicial stacky CDGA $D^*O(X^{\Delta^{\bt}}) $ when $Y$ is fibrant.

The completion $O(\hat{X})$ of $O(X)$ over $O(X_0)$ is a cosimplicial  CDGA with $O(\hat{X})^i= O(Y)\llbracket(\g^{\vee})^{\bigoplus i}\rrbracket$, for $\g$ the Lie algebra of $G$, where we write $A\llbracket M \rrbracket$ for the $(M)$-adic completion of $A\ten_R\Symm_R(M)$. Thus the stacky CDGA $D^*O(X)$ is the Chevalley--Eilenberg double complex 
\[
O([Y/\g]):=(  O(Y) \xra{\pd} O(Y)\ten \g^{\vee} \xra{\pd} O(Y)\ten \Lambda^2\g^{\vee}\xra{\pd} \ldots) 
\]
of $\g$ with coefficients in the chain $\g$-module $O(Y)$. 

The same calculation for $[Y \by G^m/G^{m+1}]$ 
%
shows that 
the cosimplicial stacky CDGA $D^*O(X^{\Delta^{\bt}})=D^*O((B[Y/G])^{\Delta^{\bt}})$ is given by a diagram 
\[
 \xymatrix@1{ O([Y/\g]) \ar@<1ex>[r] \ar@<-1ex>[r] & \ar@{.>}[l] O([Y \by G/\g^{\oplus 2}]) \ar[r] \ar@/^/@<0.5ex>[r] \ar@/_/@<-0.5ex>[r] & \ar@{.>}@<0.75ex>[l] \ar@{.>}@<-0.75ex>[l]   
O([Y \by G^2/\g^{\oplus 3}]) \ar@/^1pc/[rr] \ar@/_1pc/[rr] \ar@{}[rr]|{\cdot} \ar@{}@<1ex>[rr]|{\cdot} \ar@{}@<-1ex>[rr]|{\cdot} &&  {\phantom{E}}\cdots .}
\]


Beware that maps $(Y',G')\to (Y,G)$ only induce  levelwise  quasi-isomorphisms $D^*O(B[Y'/G']) \to D^*O(B[Y/G])$ when $Y'$ is weakly equivalent to $Y$ and $\g'$ isomorphic to $\g$  --- it is not enough for the Lie algebra cohomology groups to be isomorphic.
\end{example}

\subsubsection{Modules over stacky CDGAs}

For now, we fix a cofibrant stacky CDGA $A$ over a $\Q$-CDGA $R$. 
\begin{definition}
 Given a chain cochain complex $V$, define the cochain complex $\hat{\Tot} V \subset \Tot^{\Pi}V$ by
\[
(\hat{\Tot} V)^m := (\bigoplus_{i < 0} V^i_{i-m}) \oplus (\prod_{i\ge 0}   V^i_{i-m})
\]
with differential $\pd \pm \delta$.
\end{definition}

An alternative description of $\hat{\Tot} V$ is as the completion of $\Tot V$ with respect to the filtration $ \{\Tot \sigma^{\ge m }V\}_m$, where $\sigma^{\ge m}$ denotes brutal truncation in the cochain direction. In fact, we can write
\[
 \Lim_m \LLim_n \Tot( (\sigma^{\ge - n }V)/(\sigma^{\ge m }V)) = \hat{\Tot} V =  \LLim_n \Lim_m\Tot( (\sigma^{\ge -n }V)/(\sigma^{\ge m }V)).
\]
 The latter description also shows that there is a canonical map $(\hat{\Tot} U)\ten( \hat{\Tot} V) \to \hat{\Tot} (U \ten V)$ --- the same is not true of $\Tot^{\Pi}$ in general.

Write $DGdg\Mod(A)$ for the category of $A$-modules in chain cochain complexes. When $A \in DG^+dg\CAlg(\Q)$, write $DG^+dg\Mod(A) \subset DGdg\Mod(A) $ for the full subcategory of objects concentrated in non-negative cochain degrees.

\begin{definition}
 Given $A$-modules $M,N$ in chain cochain complexes, we define  internal $\Hom$s
$\cHom_A(M,N)$  by
\[
 \cHom_A(M,N)^i_j=  \Hom_{A^{\#}_{\#}}(M^{\#}_{\#},N^{\#[i]}_{\#[j]}),
\]
with differentials  $\pd f:= \pd_N \circ f \pm f \circ \pd_M$ and  $\delta f:= \delta_N \circ f \pm f \circ \delta_M$,
where $V^{\#}_{\#}$ denotes the bigraded vector space underlying a chain cochain complex $V$. 

We then define the  $\Hom$ complex $\hat{\HHom}_A(M,N)$ by
\[
 \hat{\HHom}_A(M,N):= \hat{\Tot} \cHom_A(M,N).
\]
\end{definition}

Observe that there is a multiplication $\hat{\HHom}_A(M,N)\ten \hat{\HHom}_A(N,P)\to \hat{\HHom}_A(M,P)$ --- the same is not true for $\Tot^{\Pi} \cHom_A(M,N)$ in general.

\begin{lemma}\label{denormmod}
 For $A \in DG^+dg\CAlg(\Q)$, the denormalisation functor $D \co DG^+dg\Mod(A) \to cdg\Mod(DA)$ to the category of $DA$-modules in cosimplicial chain complexes has a left adjoint $D^*_{\Mod}$. If, for $M \in cdg\Mod(DA)$, the underlying almost cosimplicial (see Definition \ref{almostdef}) graded module $M^{\#}_{\#}$ is isomorphic to $D^{\#}A_{\#}\ten_{A^0_{\#}}L_{\#}$ for an almost cosimplicial graded $A^0_{\#}$-module $L$, then 
\[
 D^*_{\Mod}(M)^{\#}_{\#} \cong A^{\#}_{\#}\ten_{A^0_{\#}}N_cL_{\#}
\]
as graded $A^{\#}_{\#}$-modules, where $N_c$ denotes Dold--Kan conormalisation.
\end{lemma}
\begin{proof}
Existence of the functor essentially follows because the functor $D$ preserves all limits. The description for objects of the form  $D^{\#}A_{\#}\ten_{A^0_{\#}}L_{\#}$ follows immediately from the  Dold--Kan correspondence for almost cosimplicial $A^0$-modules.  
\end{proof}
In fact, $D^*_{\Mod}(M)$ is always a quotient of $N_cM$ with $A$-module structure defined by the Alexander--Whitney cup product. 
For Cartesian $DA$-modules $M$, Lemma \ref{denormmod} (taking $L=M^0$) then implies that $D^*_{\Mod}M \cong N_cM$, with $DD^*_{\Mod}M \cong M$.

We now write $D \co DGdg\Mod(A) \to cdg\Mod(DA)$ for the composition of $D$ with the brutal truncation functor $\sigma^{\ge 0}$. We call a $DA$-module $M$ levelwise cofibrant if each $M^i$ is cofibrant in $dg\Mod(D^iA)$.  $\oL$ in $\oL D^*$. 
\begin{lemma}\label{Homrepmod}
For $A \in DG^+dg\CAlg(\Q)$,  a  levelwise cofibrant $DA$-module $M $ in $cdg\Mod(DA)$, and  $P \in DGdg\Mod( A)$, there is a canonical quasi-isomorphism
\[
 \oR \HHom_{DA}(M, DP) \simeq \Tot^{\Pi}\sigma^{\ge 0}\cHom_{A}( D_{\Mod}^*M, P). 
\]
\end{lemma}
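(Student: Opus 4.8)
The plan is to construct a natural comparison map and prove it a quasi-isomorphism by reducing, via a complete filtration, to a levelwise statement where the hypothesis of levelwise cofibrancy does the work. The two conceptual inputs are Lemma \ref{denormmod}, which gives $N_c D \cong \id$ and hence lets me pass freely between $M$ and its conormalisation $N_c M$, and the elementary fact that the totalisation over $\Delta$ of a cosimplicial chain complex is the product total complex of its conormalisation. The brutal truncation in the target appears because, by definition of $D$ on modules, $DP = D\sigma^{\ge 0}P$, so $N_c(DP) = \sigma^{\ge 0}P$.

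First I would pin down the base computation, for $M = DA$ the unit module. A $DA$-linear map $DA \to N$ is determined by a matching family of sections, so $\HHom_{DA}(DA, N) = \Lim_{\Delta} N$ and $\oR\HHom_{DA}(DA, N) \simeq \Tot(N)$, the derived limit over $\Delta$; taking $N = DP$ and using $N_c(DP) = \sigma^{\ge 0}P$ gives
\[
\oR\HHom_{DA}(DA, DP) \simeq \Tot(DP) \simeq \Tot^{\Pi}\sigma^{\ge 0}P = \Tot^{\Pi}\sigma^{\ge 0}\cHom_A(A, P),
\]
which is the asserted formula for $N_c M = A$. This already exhibits why the product total complex and the truncation $\sigma^{\ge 0}$ are the right features.

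Next I would treat general $M$ by filtering both sides by cochain degree (equivalently, cosimplicial degree on the left). Using the shuffle/Alexander--Whitney comparison together with $N_c D \cong \id$, one obtains a natural map $\Tot^{\Pi}\sigma^{\ge 0}\cHom_A(N_c M, P) \to \oR\HHom_{DA}(M, DP)$ of complete filtered complexes. On the associated graded the filtration decouples the cosimplicial direction, so the map becomes a product, over cosimplicial levels $n$, of the levelwise comparisons between $\HHom_{D^n A}(M^n, (DP)^n)$ and the corresponding graded piece of $\cHom_A(N_c M, P)$. Since $M$ is levelwise cofibrant, each levelwise Hom already computes $\oR\HHom_{D^n A}(M^n, (DP)^n)$, and the levelwise identification follows from $N_c D \cong \id$ exactly as in the base computation above; hence the comparison is a quasi-isomorphism on each graded piece.

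The main obstacle is convergence: I must check that both filtrations are complete, so that a graded quasi-isomorphism lifts to a genuine quasi-isomorphism. This is precisely where the two structural features of the target are needed --- the brutal truncation $\sigma^{\ge 0}$ makes the filtration bounded below in each total degree, while passing to the product total complex $\Tot^{\Pi}$, rather than the direct sum, makes the filtered complex complete, matching the fact that totalisation over $\Delta$ is a homotopy limit. The remaining subtlety, that $\oR\HHom_{DA}(M, DP)$ is genuinely modelled by this totalisation rather than by a more complicated homotopy end, is handled by levelwise cofibrancy together with the fact that over $\Q$ every cosimplicial cochain complex is fibrant, so that its homotopy limit is computed by $\Tot^{\Pi}$.
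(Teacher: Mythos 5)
Your overall architecture --- a base computation explaining why $\Tot^{\Pi}$ and $\sigma^{\ge 0}$ must appear, a natural comparison map, a filtration argument on associated gradeds, and a completeness check --- is a genuinely different route from the paper's, and your base case ($M=DA$) and your convergence discussion are both fine. The gap is in the central step, the identification of the associated gradeds. You claim that filtering by cochain degree ``decouples the cosimplicial direction'', so that the graded comparison becomes a product over cosimplicial levels $n$ of maps involving $\HHom_{D^nA}(M^n,(DP)^n)$. That is not what the graded pieces are. On the right-hand side, the filtration-degree-$p$ piece of $\Tot^{\Pi}\sigma^{\ge 0}\cHom_A(N_cM,P)$ is the chain complex of $A^{\#}_{\#}$-linear maps $N_cM\to P$ raising cochain degree by $p$; since $A$ has terms in all non-negative cochain degrees, this linearity couples the cosimplicial levels, so the graded piece is an equaliser-type complex built from the conormalised pieces $N_c^nM$, not a product of levelwise Homs out of $M^n$. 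To pass between $N_c^nM$ and $M^n$ you would need the Dold--Kan splitting $M^n\cong N_c^nM\oplus(\text{codegenerate part})$ to be $D^nA$-linear, but it is assembled from operators $\pd^i\sigma^j$ that are only linear over ring endomorphisms of $D^nA$. Consequently levelwise cofibrancy of $M$ does not transfer to the graded pieces your filtration actually produces --- and this is exactly the point at which the hypothesis is supposed to do the work. (Your comparison map itself is also only asserted, ``via shuffle/Alexander--Whitney'', rather than constructed; this matters because the underived isomorphism of Lemma \ref{denormmod} only identifies $\HHom_{DA}(M,DP)$ with $\z^0\cHom_A(N_cM,P)$, not with the full product total complex.)

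The paper avoids all of this analysis: it observes that $N_c\dashv D$ is a Quillen adjunction between injective-type model structures (so your levelwise cofibrant $M$ is already cofibrant and needs no resolution), and then exhibits an explicit fibrant replacement $P\to\cHom_{\Q}(S,P)$, where $S$ is a ``staircase'' chain cochain complex with acyclic columns. The result then follows from strict adjunction isomorphisms,
\[
 \oR\HHom_{DA}(M,DP)\simeq \HHom_{DA}(M,D\cHom_{\Q}(S,P))\cong \z^0\cHom_{\Q}(S,\cHom_A(N_cM,P))\cong \Tot^{\Pi}\sigma^{\ge 0}\cHom_A(N_cM,P),
\]
so $\Tot^{\Pi}\sigma^{\ge 0}$ appears as $\z^0\cHom_{\Q}(S,-)$ rather than via any convergence argument. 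To salvage your tower approach you would have to establish cofibrancy of the conormalised pieces over the relevant base, addressing the semilinearity problem above; it is more efficient to import a fibrant replacement of $P$ as the paper does, at which point the filtration machinery becomes unnecessary.
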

\begin{proof}
The adjunction of Lemma \ref{denormmod} gives us an isomorphism
\[
 \HHom_{DA}(M, DP) \cong \z^0\cHom_{A}(D_{\Mod}^*M, P). 
\]

There is a  model structure on $cdg\Mod(DA)$  in which cofibrations and and weak equivalences are defined levelwise in cosimplicial degrees. Similarly, there is a model structure on  $DGdg\Mod(A)$  in which  weak equivalences  are defined levelwise in cochain degrees, and a morphism $M \to N$ is a  cofibration if $M^{\#} \to N^{\#}$ has the left lifting property with respect to all surjections of $A^{\#}$-modules in graded chain complexes. These are essentially the injective diagram model structures (somewhat confusingly, the diagrams take values in the projective model structure on chain complexes). 
The functors $D_{\Mod}^* \dashv D$ then form a Quillen adjunction, so to calculate $\oR \HHom_{DA}$ it suffices to take a fibrant replacement for $P$. 

Write $S$ for the chain cochain complex $S^{\#}_{\#} =\bigoplus_{i\ge 0} (\Q^{[-i]}_{[-i]} \oplus \Q^{[1-i]}_{[-i]})$, with all possible differentials in $S$ being the identity. Thus $S$ looks like a staircase in the second quadrant, with all columns acyclic. A fibrant replacement of $P$ is given by $\cHom_{\Q}(S,P)$, so 
\begin{align*}
 \oR \HHom_{DA}(M, DP) &\simeq \HHom_{A}(M, D \cHom_{\Q}(S,P))\\
&\cong \z^0\cHom_A(D^*_{\Mod}M, D \cHom_{\Q}(S,P))\\
&\cong \z^0\cHom_{\Q}(S, \cHom_{A}(D^*_{\Mod}M, P))\\
&\cong \Tot^{\Pi} \sigma^{\ge 0}  \cHom_{A}(D^*_{\Mod}M, P).
\end{align*}
\end{proof}

\begin{definition} 
 Given a stacky CDGA $A$, say that an $A$-module $M$ in chain cochain complexes is homotopy-Cartesian if the maps
\[
 A^i\ten^{\oL}_{A^0}M^0 \to M^i
\]
 are quasi-isomorphisms for all $i$.
\end{definition}

\subsection{Comparing DG Artin hypergroupoids and stacky CDGAs}\label{cfArtinstackysn}

Given a DG Artin $N$-hypergroupoid $X$, and an $O(X)$-module $M$, we may pull $M$ back along the unit $\eta \co O(X) \to DD^*O(X)$ of the adjunction $D^*\dashv D$, and then apply Lemma \ref{denormmod} to obtain a $D^*O(X)$-module $D^*_{\Mod}\eta^*M$.

\begin{definition}\label{almostdef}
 As for instance in \cite[Definition \ref{stacks2-delta*}]{stacks2}, define almost cosimplicial diagrams to be functors on the subcategory  $\Delta_*$  of the ordinal number category $\Delta$ containing only  those morphisms $f$ with $f(0)=0$; define almost simplicial diagrams dually. Thus an almost simplicial diagram $X_*$ in $\C$ consists of objects $X_n \in \C$, with all of the operations $\pd_i, \sigma_i$ of a simplicial diagram except $\pd_0$,  satisfying the usual relations. 
\end{definition}

Given a simplicial (resp. cosimplicial) diagram $X$, we write $X_{\#}$ (resp. $X^{\#}$) for the underlying almost simplicial diagram (resp. almost cosimplicial) diagram.

The denormalisation functor $D$ descends to a functor from graded-commutative algebras to almost cosimplicial algebras, with $D^*$ thus descending to a functor in the opposite direction. In other words, $(D^*B)^{\#}$ does not depend on $\pd^0_B$, and  $\pd^0_{DA}$ is the only part of the structure on  $DA$ to depend on $\pd_A$. The same is true for $D^*_{\Mod}$ applied to cosimplicial $DA$-modules. 

From this, it can be seen that for any DG Artin $N$-hypergroupoid $X$, the graded-commutative algebra $\H_0D^*O(X)^{\#}$ is freely generated over $\H_0O(X)^0= \H_0D^*O(X)^0$ by a graded projective module, and that 
\[
 \H_i^*O(X)^{\#}\cong \H_0D^*O(X)^{\#}\ten_{\H_0O(X)^0}\H_iO(X)^0. 
\]

If $M$ is a homotopy-Cartesian $O(X)$-module, the map 
\[
 (\oL\eta^*M)^0\ten_{D^*O(X)^0}^{\oL}D^{\#}D^*O(X) \to (\oL\eta^*M)^{\#} 
\]
 of almost cosimplicial chain complexes is a levelwise quasi-isomorphism. 
Applying $\oL D^*_{\Mod}$, it then follows from Lemma \ref{denormmod} 
that the map
$(\oL\eta^*M)^0\ten_{D^*O(X)^0}^{\oL} D^*O(X)^{\#} \to \oL D^*_{\Mod}(\eta^*M)^{\#}$ is a levelwise quasi-isomorphism, so $\oL D^*_{\Mod}(\eta^*M)^{\#}$ is also homotopy-Cartesian. 

\subsubsection{Equivalences of hypersheaves}\label{replaceresn}

Given a simplicial presheaf $F$ on $DG\Aff(R)$, there is an induced simplicial presheaf $FD$ on $DGdg\Alg(R)$ given by 
\[
 (FD)(A):= \ho\Lim_{i\in \Delta} F(D^iA).
\]
In the case of a simplicial derived affine $X$, regarded as a functor from $dg\Alg(R)$ to simplicial sets, $XD$ is represented by the cosimplicial chain CDGA $D^*O(X^{\Delta^{\bt}})$, given in cosimplicial level $m$ by $D^*O(X^{\Delta^m})$. Applying $D$ then gives a 
bisimplicial derived affine $\Spec DD^*O(X^{\Delta^{\bt}})$, and it is natural to consider the diagonal simplicial object
 $\diag \Spec DD^*O(X^{\Delta^{\bt}})$.

\begin{proposition}\label{replaceprop}
 For any Reedy fibrant simplicial derived affine $X$, the morphism
\[
 X \to  \diag \Spec DD^*O(X^{\Delta^{\bt}})
\]
of simplicial derived affines,
coming from the  isomorphisms $ D^*O(X^{\Delta^m})^0 = O(X_m)$, induces  a weak equivalence on the associated simplicial presheaves on $DG\Aff(R)$. 
\end{proposition}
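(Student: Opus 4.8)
The plan is to verify that the map is an objectwise weak equivalence of simplicial presheaves: weak equivalences of simplicial presheaves on $DG\Aff(R)$ are detected by evaluating at each test object $\Spec A$ with $A \in dg\CAlg(R)$, so I fix such an $A$ and compare the two simplicial sets. I would first invoke the formalism of \S\ref{replaceresn}. For any simplicial presheaf $F$ and any ordinary $A$ (trivial cochain structure) we have $DA = A$ as a constant cosimplicial CDGA, and since $\Delta$ has a terminal object the homotopy limit of a constant diagram is that diagram, so $(FD)(A) = \ho\Lim_{i\in\Delta}F(D^iA) \simeq F(A)$. Because $XD$ is represented by the cosimplicial stacky CDGA $D^*O(X^{\Delta^{\bt}})$, this already pins down $X(A)$ as $(XD)(A)$ for ordinary $A$; the remaining task is to match this with the presheaf of $\diag\Spec DD^*O(X^{\Delta^{\bt}})$.

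Next I would unwind the target. Writing $W$ for the bicosimplicial CDGA $DD^*O(X^{\Delta^{\bt}})$, with external index $m$ coming from $X^{\Delta^m}$ and internal index coming from the denormalisation $D$, the associated presheaf of $\diag\Spec W$ at $A$ is $\mathrm{diag}_m \underline{\Spec DD^*O(X^{\Delta^m})}(A)$, the diagonal over $m$ of the presheaves of the affines $\Spec DD^*O(X^{\Delta^m})$. Using Lemma \ref{denormmod} to move between stacky CDGAs and cosimplicial modules, together with the fact (recorded in the discussion preceding Lemma \ref{Dstarlemma}) that $D^*$ factors through pro-nilpotent completion over cochain level $0$, I would identify $\Spec DD^*O(X^{\Delta^m})$ with the formal completion $\widehat{X^{\Delta^m}}$ of the simplicial affine $X^{\Delta^m}$ along its zeroth level $(X^{\Delta^m})_0 = X_m$. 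Under this identification the comparison morphism is induced in external degree $m$ by the iterated codegeneracy $(DD^*O(X^{\Delta^m}))^m \to O(X_m)$, which is an $m$-nilpotent extension. On the source side, $X(A) \simeq \mathrm{diag}_m \underline{X^{\Delta^m}}(A)$, and for each $m$ the map $\underline{X^{\Delta^m}}(A)\to \underline X(A)$ is an equivalence since $\Delta^m$ is contractible and $X$ is Reedy fibrant. The proposition thus reduces to showing that the completion map induces a weak equivalence $\mathrm{diag}_m \underline{X^{\Delta^m}}(A) \to \mathrm{diag}_m \underline{\widehat{X^{\Delta^m}}}(A)$.

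The crux, and the main obstacle, is that for a single fixed $m$ the map $\underline{X^{\Delta^m}}(A)\to \underline{\widehat{X^{\Delta^m}}}(A)$ is genuinely not a weak equivalence — completion changes the presheaf — and in general completion does not commute with the homotopy colimit computing the diagonal. So the argument cannot be made degreewise; one must exploit that the family of completions, as $m$ varies, is assembled so that the diagonal nonetheless recovers $X$. I would establish this as the global form of the formal equivalence \cite[Theorem \ref{ddt1-dequiv}]{ddt1}: filter $DD^*O(X^{\Delta^{\bt}})$ by the pro-nilpotent filtration of $D^*$, prove that this filtration converges uniformly enough across all external degrees to commute with the diagonal, and check that on each associated graded the extra internal cosimplicial direction acquires an extra degeneracy, supplied precisely by the codegeneracies of $D$ being nilpotent extensions. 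This extra degeneracy makes the internal direction simplicially contractible on associated gradeds, so the diagonal collapses onto the $X$-part and the error term vanishes.

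Throughout I would keep the group-quotient computation of Example \ref{DstarBG} as the guiding model: there $\diag\Spec DD^*O(X^{\Delta^{\bt}})$ presents $\mathrm{diag}_m B[Y\by G^m/\hat G^{m+1}]$, and the assertion to be checked is exactly that replacing $G^{m+1}$ by its formal completion $\hat G^{m+1}$ in every bar degree and then diagonalising recovers $[Y/G]$. This example both exhibits the nilpotent thickenings in each simplicial level and makes transparent how the extra degeneracies cause them to telescope away, and it provides a concrete test of the convergence and Eilenberg--Zilber bookkeeping that carries the general argument.
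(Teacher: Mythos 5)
There is a genuine gap, and it sits exactly at the step you flag as the crux. Your proposed mechanism contracts the wrong direction of the bisimplicial object $Z_{m,j}=\Spec D^jD^*O(X^{\Delta^m})$. For fixed external degree $m$, the internal (denormalisation) direction is \emph{not} simplicially contractible, and passing to the associated graded of the pro-nilpotent filtration does not change this: by Lemma \ref{denormmod}, conormalisation recovers $N_c DD^*O(X^{\Delta^m})\cong D^*O(X^{\Delta^m})$, so the internal direction (and likewise its associated graded) has the cohomology of $D^*O(X^{\Delta^m})$, which is nonzero in positive cochain degrees. Your own test case makes this concrete: for $X=B[*/G]$ and $m=0$ the internal simplicial presheaf is a nerve of the formal group, with value $B(\hat{G}(A))$ at a test object $A$; this has nontrivial $\pi_1$, and the associated graded merely replaces $\hat{G}$ by vector groups, which are still not contractible in that direction. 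The "extra degeneracy supplied by the codegeneracies of $D$ being nilpotent extensions" does not exist --- nilpotence of the codegeneracies gives you the \emph{filtration}, not a contraction --- so the diagonal cannot be collapsed this way, and your unproved uniform-convergence step never gets off the ground.

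The paper's proof goes the transposed way, and the one structural fact you would need is the one your outline never touches. One fixes the \emph{internal} level $j$ and shows that each $\eta^j\co X\to \Spec D^jD^*O(X^{\Delta^{\bt}})$ is a simplicial deformation retract in the \emph{external} ($\Delta^{\bt}$-power) direction, after which the levelwise weak equivalence of bisimplicial objects gives the equivalence on diagonals --- no filtration or convergence argument at all. The contraction comes from the retraction of $\Delta^j$ onto its $0$th vertex; the subtlety is that the contracting homotopies $\Delta^1\by\Delta^j\to\Delta^j$ are not compatible with the cofaces $\pd^0$, so they only assemble into a homotopy of \emph{almost} cosimplicial objects $\Delta^1\by\Delta^{\#}\to\Delta^{\#}$. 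This is rescued by the observation (recorded before Proposition \ref{replaceprop}) that $D^*$ and $D$ descend to almost (co)simplicial diagrams, since $(D^*B)^{\#}$ is independent of $\pd^0_B$; hence applying $DD^*$ to the deformation retraction of $O(X^{\Delta^{\#}})$ onto $O(cX_{\#})$ preserves it, and each $\eta^j$ is a simplicial deformation retract. Without identifying this almost-cosimplicial functoriality, you have no way to transport \emph{any} contraction of the $\Delta^{\bt}$-direction through $DD^*$, which is precisely the content of the proposition.
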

\begin{proof}
We can rewrite the morphism above as
\[
 \ho \LLim_{i\in \Delta} X_i \to  \holim_{\substack{\lra \\ i,j\in \Delta\by \Delta}} \Spec D^jD^*O(X^{\Delta^i}).
\]
It will therefore suffice to show that for all $j$, the maps $\eta^j \co X \to \Spec D^jD^*O(X^{\Delta^{\bt}})$ give weak equivalences of the associated simplicial presheaves. For this it is enough to show that each $\eta^j$  is a  simplicial deformation retract.  

Now, the bisimplicial derived affine $j \mapsto X^{\Delta^j}$ admits a map from the bisimplicial object $j \mapsto X$, which we denote by $cX$. Forgetting the coface maps $\pd^0 \co \Delta^j \to \Delta^{j+1}$ in $\Delta^{\bt}$  gives us an almost cosimplicial simplicial set $\Delta^{\#}$, and hence an almost simplicial  simplicial derived affine $X^{\Delta^{\#}} $. Inclusion of the $0$th vertex makes $\Delta^0$ a deformation retract of $\Delta^j$, with obvious contracting homotopy $\Delta^1 \by \Delta^j \to \Delta^j$. These homotopies combine to give a homotopy $\Delta^1 \by \Delta^{\#} \to \Delta^{\#}$ of almost cosimplicial simplicial sets.

This homotopy makes $O(cX_{\#})$ a cosimplicial deformation retract of $O(X^{\Delta^{\#}})$ (as cosimplicial almost cosimplicial rings). 
Since $DD^*$ descends to a functor on almost cosimplicial algebras, applying $DD^*$ then makes $O(cX_{\#}) $ a cosimplicial deformation retract of $DD^*O(X^{\Delta^{\bt}})^{\#}$, so in particular the maps  $\eta^j \co X \to \Spec D^jD^*O(X^{\Delta^{\bt}})$  are all simplicial deformation retracts.
\end{proof}
If $X$ is a DG Artin $N$-hypergroupoid, with associated $N$-stack $\fX$, this means that 
 we can  recover $\fX$ from  the cosimplicial stacky CDGA $ D^*O(X^{\Delta^{\bt}})$, just as well as from the cosimplicial CDGA $O(X)$.

Proposition \ref{replaceprop} has the following immediate consequence:
\begin{corollary}\label{gooddescent}
For any simplicial presheaf  $F$ on $DG\Aff(R)$ and any Reedy fibrant simplicial derived affine $X$, there is a canonical weak equivalence 
\[
 \ho \Lim_{j\in\Delta} \map( \Spec DD^*O(X^{\Delta^j}), F) \to \map (X, F).
\]
\end{corollary}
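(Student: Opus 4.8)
The plan is to deduce the equivalence by applying the contravariant functor $\map(-,F)$ to the weak equivalence of Proposition \ref{replaceprop}, after identifying the left-hand side with the mapping space out of the diagonal $\diag \Spec DD^*O(X^{\Delta^{\bt}})$. The statement is asserted to be an immediate consequence of that proposition, so essentially all the content is in matching up the indices.

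First I would unwind the left-hand side. For each fixed $j$, the object $\Spec DD^*O(X^{\Delta^j})$ is a simplicial derived affine indexed by the denormalisation coordinate $i$, whose associated simplicial presheaf is the homotopy colimit $\ho\LLim_i \Spec D^iD^*O(X^{\Delta^j})$. Since $\map(\Spec A, F) \simeq F(A)$ for a representable (Yoneda for simplicial presheaves, $F$ fibrant) and the homotopy function complex turns homotopy colimits in the source into homotopy limits, this gives $\map(\Spec DD^*O(X^{\Delta^j}),F) \simeq \ho\Lim_i F(D^iD^*O(X^{\Delta^j}))$, which is precisely the value $(FD)(D^*O(X^{\Delta^j}))$ of the functor $FD$ defined above. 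Taking $\ho\Lim_j$ and using Fubini for iterated homotopy limits then yields
\[
 \ho\Lim_{j\in\Delta}\map(\Spec DD^*O(X^{\Delta^j}),F) \simeq \ho\Lim_{(i,j)\in\Delta\by\Delta} F(D^iD^*O(X^{\Delta^j})).
\]

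Next I would identify the right-hand side of this display with the mapping space out of the diagonal. The simplicial derived affine $\diag \Spec DD^*O(X^{\Delta^{\bt}})$ has $n$-th term $\Spec D^nD^*O(X^{\Delta^n})$, and the standard Bousfield--Kan fact that the diagonal of a bisimplicial object computes the homotopy colimit of the full bisimplicial diagram gives, on associated presheaves, $\ho\LLim_n \Spec D^nD^*O(X^{\Delta^n}) \simeq \ho\LLim_{(i,j)}\Spec D^iD^*O(X^{\Delta^j})$. Mapping into $F$ again converts this double homotopy colimit into a double homotopy limit, so $\map(\diag\Spec DD^*O(X^{\Delta^{\bt}}),F)$ agrees with the double homotopy limit above. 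Combining the two identifications shows that the left-hand side of the corollary is canonically $\map(\diag\Spec DD^*O(X^{\Delta^{\bt}}),F)$.

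Finally, Proposition \ref{replaceprop} provides a weak equivalence $X \to \diag\Spec DD^*O(X^{\Delta^{\bt}})$ of associated simplicial presheaves on $DG\Aff(R)$. Since $\map(-,F)$ sends weak equivalences of presheaves to weak equivalences, applying it produces a weak equivalence $\map(\diag\Spec DD^*O(X^{\Delta^{\bt}}),F) \to \map(X,F)$, which under the identification above is exactly the map asserted in the corollary. The only real work lies in the second step: checking that the diagonal genuinely computes the homotopy colimit over $\Delta\by\Delta$ and that this reindexing is compatible with the Fubini rearrangement of the iterated homotopy limit on the left. Once the left-hand side is recognised as mapping out of the diagonal, the conclusion follows at once from Proposition \ref{replaceprop}.
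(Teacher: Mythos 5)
Your proposal is correct and follows essentially the same route as the paper, which states the corollary as an immediate consequence of Proposition \ref{replaceprop}: one applies $\map(-,F)$ to the weak equivalence $X \to \diag \Spec DD^*O(X^{\Delta^{\bt}})$ and reindexes. Your unwinding via Yoneda, the conversion of homotopy colimits into homotopy limits, Fubini, and the Bousfield--Kan identification of the diagonal with the homotopy colimit over $\Delta\by\Delta$ is precisely the implicit content of the paper's ``immediate consequence'', matching the rewriting $\ho\LLim_{i} X_i \to \holim_{\substack{\lra\\ i,j}} \Spec D^jD^*O(X^{\Delta^i})$ already used in the proof of Proposition \ref{replaceprop}.
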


\subsubsection{Tangent and cotangent complexes}\label{Artintgtsn}

Now consider a DG Artin $N$-hypergroupoid $X$ over $R$. Combining the adjunction $D^* \dashv D$ with Lemma \ref{denormmod} 
 and the universal property of K\"ahler differentials, observe that there is an isomorphism
\[
 D^*_{\Mod}(\Omega^1_{O(X)/R}\ten_{O(X)} DD^*O(X)) \cong  \Omega^1_{D^*O(X)/R}.
\]

Since $O(X)$ is Reedy cofibrant, Lemma \ref{Dstarlemma} ensures that $D^*O(X)$ is cofibrant in the model structure of Lemma \ref{bicdgamodel}. If we write $\cone_h$ and $\cone_v$ for cones in the chain and cochain directions respectively, then for any 
$D^*O(X)$-module $M$, the map $\cone_h\cone_v(M)^{[m-1]}_{[i]} \onto \cone_h(M)^{[m]}_{[i]}$ is levelwise acyclic, so cofibrancy of $D^*O(X)$ implies lifting of the respective spaces of derivations, or equivalently  surjectivity of
\[
 \cHom_{D^*O(X)}(\Omega^1_{D^*O(X)}, M)^{m-1}_{i} \onto \z^m\cHom_{D^*O(X)}(\Omega^1_{D^*O(X)}, M)_{i}.
\]
In other words, the columns of $\cHom_{D^*O(X)}(\Omega^1_{D^*O(X)}, M) $ are acyclic.

Because $X$ is an Artin $N$-hypergroupoid, there are in fact other restrictions on $\Omega^1_{D^*O(X)}$. The  results below (or an argument adapted from \cite[Lemma \ref{stacks2-truncate2}]{stacks2}) show that for $M \in DG^+dg\Mod({D^*O(X)})$, the rows $ \cHom_{D^*O(X)}(\Omega^1_{D^*O(X)}, M)^i$ are acyclic for $i<-N$. Moreover, the argument of \cite[Corollary \ref{ddt1-cohowelldfn}]{ddt1} shows that when  $M$ is concentrated in degree $(0,0)$, we have $\H_j\cHom_{D^*O(X)}(\Omega^1_{D^*O(X)}, M)^i=0$ for $i,j \ne 0$, although we will not need this.

\begin{lemma}\label{contractlemma}
 If, for  $A\in DG^+dg\CAlg(R)$,  an $A$-module $M \in DG^+dg\Mod(R)$  admits an $A$-linear contracting homotopy, 
then the map
\[
\ho\LLim_{i\in \Delta} \Spec D^iA \to \ho\LLim_{i\in \Delta} \Spec D^i(A \oplus M)
\]
of simplicial presheaves on $DG^+\Aff(R)$ is a weak equivalence. 
\end{lemma}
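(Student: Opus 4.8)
The plan is to exhibit the map of the statement as a levelwise homotopy equivalence with an explicit inverse, using the contracting homotopy to trivialise the $M$-direction. Write $j \co A \to A\oplus M$ and $p \co A\oplus M \to A$ for the inclusion and projection; these are maps of stacky CDGAs with $p\circ j = \id_A$, so applying $\ho\LLim_{i\in\Delta}\Spec D^i(-)$ yields maps $\Psi$ (the map of the statement, induced by $p$) and $\Phi$ (induced by $j$) with $\Phi\circ\Psi = \id$. It therefore suffices to prove that $\Psi\circ\Phi = \ho\LLim_{i}\Spec D^i(\theta)$ is homotopic to the identity, where $\theta:= j\circ p$ is the idempotent endomorphism of $A\oplus M$ which is the identity on $A$ and zero on $M$.

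To produce this homotopy I would use the contracting homotopy directly. Let $h$ denote the given $A$-linear contracting homotopy of the total complex of $M$, so that $(\pd\pm\delta)h + h(\pd\pm\delta) = \id_M$, and regard the polynomial de~Rham interval $\Omega^{\bt}(\Delta^1) = \Q[t,dt]$ as a chain CDGA placed purely in the chain (derived) direction, with $dt$ in chain degree $-1$. Define an $A$-linear map
\[
 H \co A\oplus M \to (A\oplus M)\ten_{\Q}\Omega^{\bt}(\Delta^1), \qquad a\mapsto a, \quad m \mapsto t\,m + dt\,h(m).
\]
Because $M\cdot M =0$ and $h$ is $A$-linear, $H$ is a homomorphism of stacky CDGAs; because $h$ is a contracting homotopy, a short computation with signs gives $(\pd\pm\delta)H(m)=H((\pd\pm\delta)m)$, so $H$ is compatible with the total differential. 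Evaluating the interval at its two vertices (that is, $t\mapsto 1,\ dt\mapsto 0$ and $t\mapsto 0,\ dt\mapsto 0$) recovers $\id_{A\oplus M}$ and $\theta$ respectively. Note that it is precisely the $A$-linearity of $h$ that makes $H$ multiplicative, so this is where the hypothesis enters.

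It remains to descend $H$ to a homotopy on the homotopy colimit. Since the denormalisation $D$ only affects the cochain (stacky) grading while $\Omega^{\bt}(\Delta^1)$ lives in the chain direction, there is a natural isomorphism $D^i((A\oplus M)\ten\Omega^{\bt}(\Delta^1))\cong D^i(A\oplus M)\ten\Omega^{\bt}(\Delta^1)$ for each $i$. Thus, after applying $\Spec D^i$ and evaluating the represented presheaves at a test object $\Spec C$, the map $H$ produces, naturally in $i$ and in $C$, an edge in the derived mapping space $\Map(D^i(A\oplus M),C)$ joining the self-map induced by $\theta$ to the identity, with $\Omega^{\bt}(\Delta^1)$ serving as the interval object. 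Assembling these edges over $i\in\Delta$ and passing to the homotopy colimit shows $\ho\LLim_i\Spec D^i(\theta)\simeq\id$, whence $\Psi$ is an objectwise weak equivalence.

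The main obstacle is this final descent step: one must check carefully that the strict polynomial homotopy $H$ genuinely represents a homotopy of simplicial presheaves after $\ho\LLim_i\Spec D^i$. This amounts to verifying that $\Omega^{\bt}(\Delta^1)$, placed in the derived direction so as to be untouched by the cochain-direction functor $D$, behaves as a good interval object for the relevant derived mapping spaces (so that maps into $C\ten\Omega^{\bt}(\Delta^1)$ are exactly homotopies), together with the usual cofibrancy bookkeeping needed for those mapping spaces to be homotopy-invariant. By contrast, the algebraic input — that $H$ is a well-defined CDGA map compatible with the total differential — is a routine verification once the construction is in place.
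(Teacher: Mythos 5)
There is a genuine gap here, and it lies exactly where your bigrading bookkeeping forces your hypothesis to diverge from the one the lemma is meant to encode. For $H(m)=t\,m+dt\,h(m)$ to be a morphism of stacky CDGAs with $\Omega^{\bt}(\Delta^1)$ placed in the chain direction ($dt$ of chain degree $-1$), the map $h$ must be bihomogeneous of bidegree (cochain $0$, chain $+1$); your equation $(\pd\pm\delta)h+h(\pd\pm\delta)=\id_M$ then splits by bidegree into $\delta h+h\delta=\pm\id$ and $\pd h\pm h\pd=0$, i.e.\ $h$ is a contraction of each \emph{chain} complex $M^i$, commuting with $\pd$. But the contracting homotopy in this lemma is in the \emph{cochain} (stacky) direction: the paper encodes it as an $A$-linear section of the canonical map $\cocone_v(M)\to M$, where $\cocone_v(\Q)=\Q\oplus\Q^{[-1]}$ sits in cochain degrees $0,1$ with $\pd$ the identity, so unwinding the section gives $\pd h+h\pd=\pm\id$ with $h$ of cochain degree $-1$. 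This matters because in the one place the lemma is used (Corollary \ref{suspendcor}), the module is $\cocone_v(M)$ itself, which carries only this vertical contraction and is in general nowhere near chain-contractible. Under your reading, $M$ is levelwise acyclic, so $A\oplus M\to A$ is already a levelwise quasi-isomorphism and your statement is both much weaker and useless for the application. Nor can you repair the construction by moving $\Q[t,dt]$ into the cochain direction to accommodate a vertical $h$: your key isomorphism $D^i((A\oplus M)\ten\Omega^{\bt}(\Delta^1))\cong D^i(A\oplus M)\ten\Omega^{\bt}(\Delta^1)$ holds precisely because $D$ does not touch the chain grading, and it fails once the interval occupies the cochain grading on which $D$ acts. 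The whole content of the lemma is the conversion of a cochain-direction contraction into a homotopy in the \emph{cosimplicial} direction, and the correct interval there is the simplicial one, not the de Rham one: the paper applies $D$ to the section, uses the Alexander--Whitney map $D\cocone_v(M)\to(DM)\ten D\cocone_v(\Q)$ together with the decomposition $\Q^{\Delta^1}=\Q\oplus D\cocone_v(\Q)$ to obtain a contracting homotopy $DM\to(DM)^{\Delta^1}$, and hence a simplicial homotopy $\Delta^1\by\Spec D(A\oplus M)\to\Spec D(A\oplus M)$ exhibiting $\Spec DA$ as a simplicial deformation retract.

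The descent step you flag as the main obstacle is also a real defect, even granting your hypothesis. The presheaves $\Spec D^i(A\oplus M)$ take \emph{discrete} values (strict $\Hom$-sets of CDGAs), so an $\Omega^{\bt}(\Delta^1)$-homotopy between two maps $D^i(A\oplus M)\to C$ is not an edge in these presheaves at all; it only identifies the two maps in derived mapping spaces, i.e.\ after localising test objects at quasi-isomorphisms, and would additionally require cofibrancy of the $D^i(A\oplus M)$ and coherence of the identifications over $\Delta$ before passing to $\ho\LLim$. The paper's route avoids all of this: its homotopy uses $\Delta^1$ in the cosimplicial direction, so it is a genuine simplicial homotopy of simplicial presheaves, giving an objectwise homotopy equivalence with no localisation or cofibrancy bookkeeping whatsoever.
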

\begin{proof}
 A contracting homotopy is the same as a section of the canonical  map $\cocone_v(M) \to M$ of $A$-modules, where the vertical cocone $\cocone_v(M)$ is given by $M \ten \cocone_v(\Q)$ for $\cocone_v(\Q)= \Q \oplus \Q^{[-1]}$ with $\pd$ the identity.

On applying $D$, we then have a section $s$ of $ D\cocone_v(M) \to DM$ as $DA$-modules in cosimplicial chain complexes. The Alexander--Whitney cup product gives a canonical morphism  $D\cocone_v(M) \to (DM)\ten D\cocone_v(\Q)$.   Since $\Q^{\Delta^1} = \Q \oplus  D\cocone_v(\Q)$, our section $s$ then gives a contracting homotopy $DM \to (DM)^{\Delta^1}$, so evaluation at $0 \in \Delta^1$ is the zero map, and evaluation at $1 \in \Delta^1$ the identity.

We therefore have a homotopy $\Delta^1 \by \Spec D(A \oplus M) \to \Spec D(A \oplus M)$ of simplicial derived affine schemes (and hence of simplicial presheaves) realising $\Spec DA$ as a deformation retract of $\Spec D(A \oplus M)$. 
\end{proof}

From now on, we will simply write $(\Spec DA)^{\sharp}:= \ho\LLim_{i\in \Delta} (\Spec D^iA)^{\sharp}$ for the  \'etale hypersheafification of the simplicial derived affine $\Spec DA$.

\begin{corollary}\label{suspendcor}
 For  $A\in DG^+dg\CAlg(R)$,   $M \in DG^+dg\Mod(A)$ and $m> 0$,
there is a weak equivalence
\[
 (\Spec D(A \oplus M^{[-m]}))^{\sharp}\simeq (\Spec D(A \oplus M) \by S^m)^{\sharp}\cup^{\oL}_{\Spec D(A)^{\sharp} \by S^m} (\Spec DA)^{\sharp}
\]
of \'etale hypersheaves on $DG\Aff(R)$, where $S^m \simeq \Delta^m/\pd \Delta^m$ is the $m$-sphere.
\end{corollary}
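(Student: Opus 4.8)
The plan is to realise the cochain shift $M\mapsto M^{[-m]}$ as a homotopy pullback of square-zero extensions, then exploit that $D$ preserves limits while the \'etale hypersheafification $(-)^{\sharp}=\ho\LLim_{i\in\Delta}(\Spec D^i(-))^{\sharp}$ preserves homotopy colimits, so that the pullback is carried to the homotopy pushout on the right. I treat $m=1$ first and then induct on $m$.

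For $m=1$, the acyclic vertical cocone gives a short exact sequence $0\to M^{[-1]}\to \cocone_v M\to M\to 0$ of $A$-modules, and a direct check identifies
\[
A\oplus M^{[-1]}\cong A\times_{A\oplus M}(A\oplus \cocone_v M)
\]
as stacky CDGAs, the two maps being the structure inclusion $A\to A\oplus M$ and the surjection $A\oplus \cocone_v M\onto A\oplus M$. Since $D$ is a right adjoint it preserves this pullback, and as the second leg is a levelwise surjection the square is a homotopy pullback of cosimplicial chain CDGAs. Applying $\Spec$ and $(-)^{\sharp}$ converts it into a homotopy pushout
\[
(\Spec D(A\oplus M^{[-1]}))^{\sharp}\simeq (\Spec DA)^{\sharp}\cup^{\oL}_{(\Spec D(A\oplus M))^{\sharp}}(\Spec D(A\oplus \cocone_v M))^{\sharp}.
\]
By Lemma \ref{contractlemma} applied to the contractible module $\cocone_v M$, the right-hand vertex is equivalent to $(\Spec DA)^{\sharp}$; tracing the two legs through this identification shows that both maps $(\Spec D(A\oplus M))^{\sharp}\to(\Spec DA)^{\sharp}$ become the projection $p$ induced by $A\to A\oplus M$. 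The pushout is therefore the fibrewise suspension of the bundle $p$, and since $p$ carries a zero-section (induced by the projection $A\oplus M\onto A$) this agrees with the reduced fibrewise suspension $(\Spec D(A\oplus M)\by S^1)^{\sharp}\cup^{\oL}_{(\Spec DA\by S^1)^{\sharp}}(\Spec DA)^{\sharp}$, with $S^1=\Delta^1/\pd\Delta^1$.

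For general $m$ I write $M^{[-m]}=(M^{[-(m-1)]})^{[-1]}$ and apply the case $m=1$ to $N:=M^{[-(m-1)]}$, which for $m>0$ is still concentrated in non-negative cochain degrees. Using that $-\by S^1$ commutes with $(-)^{\sharp}$ (as $S^1$ is a finite constant simplicial set and sheafification preserves homotopy colimits), the $m=1$ equivalence presents $(\Spec D(A\oplus M^{[-m]}))^{\sharp}$ as the reduced fibrewise suspension $\Sigma^{S^1}_B$, relative to $B:=(\Spec DA)^{\sharp}$, of $(\Spec D(A\oplus M^{[-(m-1)]}))^{\sharp}$. The inductive hypothesis rewrites the latter as $\Sigma^{S^{m-1}}_B$ of $Y:=(\Spec D(A\oplus M))^{\sharp}$ with its canonical section, and associativity of the fibrewise reduced suspension together with $S^1\wedge S^{m-1}\simeq S^m$ gives $\Sigma^{S^1}_B\Sigma^{S^{m-1}}_B Y\simeq \Sigma^{S^m}_B Y$, which is exactly the asserted formula.

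The main obstacle is the step from the CDGA-level homotopy pullback to a homotopy pushout of hypersheaves: the Yoneda embedding does not preserve colimits, so one must verify that $\Spec$ of this particular pullback genuinely becomes a homotopy pushout after sheafification. This holds because one leg is a square-zero (hence nilpotent) closed immersion of derived affines, along which pushouts are effective in the \'etale topology, and because the explicit realisation $\ho\LLim_{i\in\Delta}$ together with the deformation-retract mechanism behind Lemma \ref{contractlemma} allows the gluing to be checked levelwise rather than by appeal to abstract descent. A secondary point needing care is the compatibility of the canonical section of each iterated fibrewise suspension with the zero-section of the corresponding square-zero extension, which is what legitimises replacing unreduced by reduced suspensions throughout.
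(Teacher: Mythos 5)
Your argument is essentially the paper's own proof: the paper likewise uses the exact sequence $0 \to M^{[-1]} \to \cocone_v(M) \to M \to 0$ together with Lemma \ref{contractlemma} to exhibit $(\Spec D(A \oplus M^{[-1]}))^{\sharp}$ as the homotopy pushout of the two nilpotent-projection legs $(\Spec DA)^{\sharp} \la (\Spec D(A\oplus M))^{\sharp} \to (\Spec DA)^{\sharp}$, i.e.\ the fibrewise suspension over $(\Spec DA)^{\sharp}$, and then iterates by replacing $M$ with $M^{[-j]}$ for $0 \le j \le m$. You have merely made explicit two points the paper leaves implicit --- the strict Milnor-square identification $A\oplus M^{[-1]}\cong A\times_{A\oplus M}(A\oplus\cocone_v M)$ with the passage from this ring pullback to a pushout of \'etale hypersheaves along the square-zero immersion, and the reduced-versus-unreduced suspension bookkeeping with the section in the induction --- so the proposal is correct to the same standard as, and by the same route as, the paper.
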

\begin{proof}
 We have an exact sequence $0 \to M[-1] \to \cocone_v(M) \to M \to 0$. Since $\cocone_v(M)$ has a contracting homotopy, Lemma \ref{contractlemma} shows that $(\Spec D(A \oplus \cocone_v(M))^{\sharp} \simeq (\Spec DA)^{\sharp}$. Thus $ (\Spec D(A \oplus M[-1]))^{\sharp}$ is the homotopy pushout of the nilpotent maps
\[
 (\Spec DA)^{\sharp} \la (\Spec D(A \oplus M))^{\sharp} \to (\Spec DA)^{\sharp},
\]
so is the suspension of $(\Spec D(A \oplus M))^{\sharp}$ over $ (\Spec DA)^{\sharp}$. Replacing $M$ with $M[-j]$ for $0 \le j \le m$ then gives the desired result by induction.
\end{proof}

\begin{definition}
 For any stacky CDGA $A$ over $R$, the module  $\Omega^1_{A/R}$ of K\"ahler differentials is  an $A$-module in chain cochain complexes, and we define $\Omega^p_{A/R}:= \Lambda_A^p \Omega^1_{A/R}$,  denoting its differentials (inherited from $A$) by $(\pd,\delta)$. There is also a de Rham differential $d \co \Omega^p_{A/R} \to \Omega^{p+1}_{A/R}$.
\end{definition}

\begin{proposition}\label{tgtcor1}
 For a DG Artin $N$-hypergroupoid $X$ over $R$, 
and any $M \in DG^+dg\Mod(D^*O(X))$,   the cotangent complex $\bL^{X/R}$ satisfies 
\begin{align*}
 \oR\HHom_{O(X)}(\bL^{X/R}, DM) &\simeq \Tot^{\Pi}\sigma^{\ge -N}\cHom_{D^*O(X)}(\Omega^1_{D^*O(X)/R}, M)\\
&\simeq \hat{\HHom}_{D^*O(X)}(\Omega^1_{D^*O(X)/R}, M).
\end{align*}
The same is true for any homotopy-Cartesian module $M \in DGdg\Mod(D^*O(X))$.
\end{proposition}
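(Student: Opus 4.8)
The plan is to transport the derived hom over the cosimplicial CDGA $O(X)$ into a $\cHom$-computation over the stacky CDGA $D^*O(X)$, exploiting the adjunction $D^*\dashv D$, the module equivalence of Lemma \ref{denormmod}, and the presentation of Lemma \ref{Homrepmod}. I would first prove the statement for $M$ concentrated in non-negative cochain degrees, and only afterwards remove this hypothesis in the homotopy-Cartesian case. The organising principle is that the homotopy limit over $\Delta$ implicit in $\oR\HHom_{O(X)}$ of cosimplicial modules is precisely what the stacky cochain direction of $D^*O(X)$ encodes, so that under $D$ and $N_c$ the two computations match.

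The starting point is the isomorphism $N_c(\Omega^1_{O(X)/R}\ten_{O(X)}DD^*O(X))\cong\Omega^1_{D^*O(X)/R}$ recorded above, which says that under the equivalence of Lemma \ref{denormmod} the stacky cotangent module $\Omega^1_{D^*O(X)/R}$ corresponds to the pullback $\eta^*\Omega^1_{O(X)/R}$ along the unit $\eta\co O(X)\to DD^*O(X)$. Since $O(X)$ is Reedy cofibrant, Lemma \ref{Dstarlemma} ensures that $D^*O(X)$ is cofibrant, so $\Omega^1_{D^*O(X)/R}$ is a levelwise cofibrant model and Lemma \ref{Homrepmod} applies with first argument $\eta^*\Omega^1_{O(X)/R}=D\Omega^1_{D^*O(X)/R}$ and with $P=M$ in the second slot; extension of scalars along $\eta$ then converts the hom over $DD^*O(X)$ back into a hom over $O(X)$. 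Unwinding the definition $\bL^{X/R}=\Tot N_c^{\le N}\underline{\Omega(X/R)}$ of Definition \ref{cotdef}, the derived hom $\oR\HHom_{O(X)}(\bL^{X/R},DM)$ becomes a product-total complex of the levelwise homs, each identified with $\cHom_{D^*O(X)}(\Omega^1_{D^*O(X)/R},M)$. The bounded truncation $N_c^{\le N}$ — equivalently the fact, forced by $X$ being an $N$-hypergroupoid, that the cotangent cohomology is concentrated in cochain degrees $[0,N]$ — is dualised by the hom into the brutal cochain truncation $\sigma^{\ge -N}$, yielding the first asserted equivalence $\oR\HHom_{O(X)}(\bL^{X/R},DM)\simeq\Tot^{\Pi}\sigma^{\ge -N}\cHom_{D^*O(X)}(\Omega^1_{D^*O(X)/R},M)$.

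For the second equivalence I would compare $\Tot^{\Pi}\sigma^{\ge -N}$ with the completed total complex $\hat{\Tot}$ defining $\hat{\HHom}$. Here I invoke the row-acyclicity noted before the statement, namely that $\cHom_{D^*O(X)}(\Omega^1_{D^*O(X)/R},M)^i$ is acyclic for $i<-N$: this lets me pass freely between the brutally truncated product-total complex and its completion, while on the finitely many remaining negative rows $-N\le i<0$ the direct sum and product appearing in the definition of $\hat{\Tot}$ coincide. Hence $\Tot^{\Pi}\sigma^{\ge -N}\cHom_{D^*O(X)}(\Omega^1_{D^*O(X)/R},M)\simeq\hat{\HHom}_{D^*O(X)}(\Omega^1_{D^*O(X)/R},M)$.

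Finally, to extend from non-negatively graded $M$ to an arbitrary homotopy-Cartesian module, I would use that such an $M$ is determined up to quasi-isomorphism by $M^0$ through the maps $(D^*O(X))^i\ten^{\oL}_{(D^*O(X))^0}M^0\to M^i$, together with a cochain-shift argument built on Corollary \ref{suspendcor}: shift $M$ into non-negative cochain degrees, apply the case already proved, and shift back, observing that all three functors in the statement are compatible with such shifts and that the row-acyclicity makes both truncation operations insensitive to them. The main obstacle, I expect, is exactly the cochain bookkeeping in the first equivalence: one must match the $\Delta$-homotopy limit inside $\oR\HHom_{O(X)}$ with the stacky totalisation and verify that the $N$-hypergroupoid condition forces the content below cochain degree $-N$ to be acyclic, so that $\sigma^{\ge -N}$ (and not the $\sigma^{\ge 0}$ of Lemma \ref{Homrepmod}) is the correct truncation; once this is in place the homotopy-Cartesian bootstrap is comparatively routine.
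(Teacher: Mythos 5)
Your first equivalence contains the genuine gap. The route you describe --- base change along $\eta$, the isomorphism $N_c(\Omega^1_{O(X)/R}\ten_{O(X)}DD^*O(X))\cong\Omega^1_{D^*O(X)/R}$, and Lemma \ref{Homrepmod} --- computes $\oR\HHom_{O(X)}(\Omega^1_{X/R},DM)\simeq\Tot^{\Pi}\sigma^{\ge 0}\cHom_{D^*O(X)}(\Omega^1_{D^*O(X)/R},M)$, with truncation $\sigma^{\ge 0}$, \emph{not} $\sigma^{\ge -N}$. But the natural map $\bL^{X/R}\to\Omega^1_{X/R}$ is not an equivalence of $O(X)$-modules ($\bL^{X/R}$ is homotopy-Cartesian, $\Omega^1_{X/R}$ in general is not), and the two derived homs differ exactly in the rows $-N\le i<0$ that separate the two truncations; for $X=B[*/G]$, say, the row $i=-1$ contributes nontrivially. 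So your claim that the truncation $N_c^{\le N}$ of Definition \ref{cotdef} is ``dualised by the hom'' into $\sigma^{\ge -N}$ is precisely the content of the step, and it does not follow from unwinding the definition. The paper bridges this gap geometrically: Corollary \ref{suspendcor} identifies the cochain shift $M\mapsto M^{[-N]}$ with forming the copower by the sphere $S^N$, and combining this with the $N$-stack identity $\bL^{\fX}\simeq\bL^{\fX^{S^N}}_{[N]}$ converts Lemma \ref{Homrepmod}'s $\sigma^{\ge 0}\cHom(-,M^{[-N]})$ into $\sigma^{\ge -N}\cHom(-,M)$ while simultaneously trading $\Omega^1$-homs for $\bL$-homs. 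You invoke Corollary \ref{suspendcor} only in your final bootstrap, where it is not needed; the key tool is misallocated.

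Two further problems. For the second equivalence you invoke the acyclicity of the rows $\cHom_{D^*O(X)}(\Omega^1_{D^*O(X)/R},M)^i$ for $i<-N$, but the paper states this acyclicity as a consequence of ``the results below'' --- that is, of Proposition \ref{tgtcor1} itself --- so using it as an input is circular unless you supply the independent argument adapted from \cite{stacks2}, which you do not. The paper's own route avoids this: $X$ is \emph{a fortiori} an $(N+r)$-hypergroupoid for all $r\ge 0$, so the first equivalence holds with $\sigma^{\ge -N-r}$ for every $r$, and the filtered colimit over $r$ yields $\hat{\Tot}\cHom=\hat{\HHom}$ (and, as a by-product, the row-acyclicity you wanted to assume). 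Finally, your shift-and-shift-back argument for homotopy-Cartesian $M$ fails as stated: the functors $\hat{\Tot}$, $\sigma^{\ge 0}$ and $\sigma^{\ge -N}$ are \emph{not} compatible with cochain shifts (in $\hat{\Tot}$ the direct sum switches to a product at cochain degree $0$), and a shifted module is no longer homotopy-Cartesian, since that condition ties $M^i$ to $A^i\ten^{\oL}_{A^0}M^0$ with $A^i=0$ for $i<0$. The correct observation is much simpler: homotopy-Cartesianness forces $M^i$ to be acyclic for $i<0$, so one replaces $M$ by $\sigma^{\ge 0}M$ and applies the case already proved.
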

\begin{proof}
If $\fX= X^{\sharp}$ is the derived $N$-stack associated to $X$, then for $f \co X \to \fX$, Lemma \ref{suspendcor} gives an equivalence
  \[
 \oR\HHom_{\O_{\fX}}(\oL c^* \bL^{\fX^{S^N}}, \oR f_* DM) \simeq \oR\HHom_{O(X)}(\Omega^1_{X/R}, D(M^{[-N]})),
\]
for the map $c \co \fX \to \fX^{S^N}$. Since $\fX$ is an $N$-stack, there is a canonical equivalence $\bL^{\fX} \simeq  \bL^{\fX^{S^N}}_{[N]}$ (cf. \cite[Corollary \ref{stacks2-loopcot}]{stacks2}), so Lemma \ref{Homrepmod} gives 
\[
 \oR\HHom_{O(X)}(\bL^{X/R}, DM) \simeq \Tot^{\Pi}\sigma^{\ge -N}\cHom_{D^*O(X)}(\Omega^1_{D^*O(X)/R}, M).
\]

Since $X$ is \emph{a fortiori} an $(N+r)$-hypergroupoid for all $r\ge 0$, we may replace $\sigma^{\ge -N}$ with $\sigma^{\ge -N-r}$. Taking the filtered colimit over all $r$ then replaces $\Tot^{\Pi}\sigma^{\ge -N}\cHom$ with $\hat{\Tot}\cHom= \hat{\HHom}$, yielding the second quasi-isomorphism. Finally, observe that for $M$ homotopy-Cartesian, we must have $M^i$ acyclic for $i<0$, because $A^i=0$. Thus we can replace $M$ with $\sigma^{\ge 0}M$, and the results apply.

\end{proof}

\begin{proposition}\label{tgtcor2}
 For a DG Artin $N$-hypergroupoid $X$ over $R$, 
any $M \in DG^+dg\Mod(D^*O(X))$, and any $r \ge 0$, we have
\[
 \oR\HHom_{O(X)}((\bL^{X/R})^{\ten r}, DM) \simeq \hat{\HHom}_{D^*O(X)}((\Omega^1_{D^*O(X)/R})^{\ten r}, M).
\]
\end{proposition}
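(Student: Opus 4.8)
The plan is to induct on $r$, peeling off one tensor factor at each step and reducing to Proposition \ref{tgtcor1}. Throughout, write $A:= D^*O(X)$ and $\Omega:= \Omega^1_{A/R}$. The case $r=1$ is precisely Proposition \ref{tgtcor1}, and the case $r=0$ reads
\[
 \oR\HHom_{O(X)}(O(X), DM) \simeq \hat{\HHom}_A(A,M)= \hat{\Tot} M,
\]
which is a direct computation via Lemma \ref{Homrepmod} (taking the unit module $A$, and noting that $\sigma^{\ge 0}M$ and $\hat{\Tot} M$ agree up to quasi-isomorphism, since $M^i$ is acyclic for $i<0$ whenever $M$ is non-negatively graded or homotopy-Cartesian).

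For the inductive step I would exploit the tensor--hom adjunction in two parallel forms. On the stacky side, internal-hom adjunction for $A$-modules gives an isomorphism of chain cochain complexes
\[
 \cHom_A(\Omega^{\ten r}, M) \cong \cHom_A(\Omega^{\ten (r-1)}, \cHom_A(\Omega, M)),
\]
so that a single application of the completed totalisation yields
\[
 \hat{\HHom}_A(\Omega^{\ten r}, M)= \hat{\HHom}_A(\Omega^{\ten (r-1)}, \cHom_A(\Omega, M)).
\]
On the derived side, the corresponding adjunction for $O(X)$-modules gives
\[
 \oR\HHom_{O(X)}((\bL^{X/R})^{\ten r}, DM) \simeq \oR\HHom_{O(X)}\big((\bL^{X/R})^{\ten (r-1)}, \oR\cHHom_{O(X)}(\bL^{X/R}, DM)\big).
\]
These two reductions will match under the induction hypothesis for $r-1$, provided the inner internal hom $\oR\cHHom_{O(X)}(\bL^{X/R}, DM)$ is identified with $D\cHom_A(\Omega, M)$ and the latter is again a module to which the hypothesis applies.

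The crux, and the step I expect to be the main obstacle, is therefore an internal-hom refinement of Proposition \ref{tgtcor1}: the statement that
\[
 \oR\cHHom_{O(X)}(\bL^{X/R}, DM) \simeq D\cHom_A(\Omega, M)
\]
as homotopy-Cartesian $O(X)$-modules, rather than merely after taking derived global sections. I would obtain this by rerunning the proof of Proposition \ref{tgtcor1} --- the reduction through $\fX^{S^N}$, the suspension computation of Corollary \ref{suspendcor}, and the representability of Lemma \ref{Homrepmod} --- entirely at the level of internal homs, so that no global sections are taken until the very end. Two points must be checked along the way: that $\cHom_A(\Omega, M)$ is homotopy-Cartesian whenever $M$ is, and that it is concentrated (up to quasi-isomorphism) in cochain degrees $\ge -N$. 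Both follow from the perfectness of the cotangent complex $\bL^{\fX}$ forced by the Artin $N$-hypergroupoid condition, which is exactly the input making the rows of $\cHom_A(\Omega, -)$ vanish in cochain degrees $<-N$ in Proposition \ref{tgtcor1}. Granting the refinement, the completion $\hat{\Tot}$ is applied only once, at the end; the iterated internal homs compose at the bigraded level before any totalisation; and the induction closes.
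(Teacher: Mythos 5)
There is a genuine gap, and it sits exactly at the step you identify as the crux: the internal-hom refinement
\[
 \oR\cHHom_{O(X)}(\bL^{X/R}, DM) \simeq D\cHom_A(\Omega, M)
\]
is false with the paper's conventions. On modules, $D$ denotes $D\circ\sigma^{\ge 0}$ (composition with brutal truncation), and the truncation destroys precisely the cochain degrees $[-N,-1]$ of $\cHom_A(\Omega,M)$, which carry the stacky tangent directions and are \emph{not} acyclic. You half-acknowledge this when you say the internal hom is concentrated in degrees $\ge -N$ ``up to quasi-isomorphism'', but your displayed formula contradicts it: on global sections, Proposition \ref{tgtcor1} identifies the left side with $\Tot^{\Pi}\sigma^{\ge -N}\cHom_A(\Omega,M)$, whereas your right side yields only $\Tot^{\Pi}\sigma^{\ge 0}\cHom_A(\Omega,M)$. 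The discrepancy is visible already for $X=B[*/\bG_m]$ over $R=\Q$ with $M=A=O(B\g)$: here $\Omega^1_{A/R}\cong A\ten\g^{\vee}$ with generators in cochain degree $1$, so $\cHom_A(\Omega^1_A,A)\cong A\ten\g$ begins in cochain degree $-1$; the left side has $\H^{-1}\cong\g=\Q$ (this is $\bT_{B\bG_m}\simeq\g[1]$), while $D\sigma^{\ge 0}\cHom_A(\Omega^1_A,A)$ has global sections concentrated in cochain degrees $\ge 0$. For nonabelian $\g$ the truncation error persists in degree $0$ as well (one gets $\Der(\g)$ instead of $\H^1(\g,\g)=\Out(\g)$). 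Nor can one fix this by a better choice of target of the form $DM'$ while keeping your induction: any cosimplicial module has conormalisation in non-negative cochain degrees, so the correct $M'$ necessarily differs from $\cHom_A(\Omega,M)$ in degrees $[-N,-1]$, and since $\hat{\Tot}$ takes direct \emph{sums} in negative cochain degrees, $\hat{\HHom}_A(\Omega^{\ten(r-1)},M')$ then genuinely differs from $\hat{\HHom}_A(\Omega^{\ten r},M)$, so the induction does not close. Your proposed repair --- rerunning the $\fX^{S^N}$/Corollary \ref{suspendcor} argument at internal-hom level --- shifts degrees before applying $D$, but $D$ of a shift is not a shift of $D$, and the shifted-and-truncated inner module is then neither in $DG^+dg\Mod$ nor homotopy-Cartesian in the form your hypothesis requires, so each inductive step would amount to reproving Proposition \ref{tgtcor1} from scratch. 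Two further inaccuracies: $DM$ is not homotopy-Cartesian over $O(X)$ (it is supported on the formal completion along $X_0$), so neither is the internal hom, contrary to your parenthetical claim; and perfectness of $\bL^{\fX}$ is neither assumed in this setting nor forced by the hypergroupoid condition --- the relevant input is only that $(\Omega^1_{A/R}\ten_A A^0)^i$ is acyclic for $i>N$.

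The paper's actual proof avoids internal homs over $X$ altogether, and this is the economical route. It first produces the comparison map from the natural morphism $\bL^{X/R}\to\Omega^1_{X/R}$ together with Lemma \ref{Homrepmod}; then the spectral sequence of the brutal cochain truncation filtration on $M$ reduces to the case where $M=M^i[-i]$ is concentrated in a single cochain degree, hence is an $O(X_0)$-module (using $O(X)^0=D^*O(X)^0$). Pulling back to $X_0$, Proposition \ref{tgtcor1} supplies both that $\bar{\Omega}$ is levelwise quasi-isomorphic to $\sigma^{\le N}\bar{\Omega}$ and that $L\simeq\Tot\sigma^{\le N}\bar{\Omega}$; the resulting boundedness makes $\hat{\Tot}=\Tot^{\Pi}$ and lets totalisation commute with the finite tensor power, so the $r$-fold statement becomes an elementary comparison over the plain CDGA $O(X_0)$. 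In short: handle all $r$ tensor factors at once after reducing to $X_0$, rather than peeling them off over $X$, because the cosimplicial side structurally cannot record the negative cochain degrees your adjunction step needs.
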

\begin{proof}
There is a natural map $\bL^{X/R} \to \Omega^1_{X/R}$ coming from Definition \ref{cotdef}. Corollary \ref{Homrepmod} thus ensures that we have a natural map from right to left. 

First consider the case where $M=M^i[-i]$ is an $O(X_0)$-module, since  $O(X)^0=D^*O(X)^0$.
 Writing $L$ for the pullback of $\bL^{X/R}$ to $X_0$, and $\bar{\Omega}$ for the pullback of $\Omega^1_{D^*O(X)/R} $ to $O(X_0)$, we therefore wish to show that the map
\[
 \hat{\Tot} \cHom_{O(X_0)}(\bar{\Omega}^{\ten r}, M)   \to  \oR\HHom_{O(X_0)}(L^{\ten r}, M)
\]
is a quasi-isomorphism.

Proposition \ref{tgtcor1} implies that $\bar{\Omega}$ is levelwise quasi-isomorphic to the brutal cotruncation $\sigma^{\le N}\bar{\Omega}:= \bar{\Omega}/\sigma^{\ge N+1}$, so we may replace   $\bar{\Omega}^{\ten r} $ with a bicomplex in cochain degrees $[0, Nr]$. The resulting $\cHom$ bicomplex is bounded below, so $\hat{\Tot}$ is equivalent to $\Tot^{\Pi}$, and we need only show a quasi-isomorphism
\[
 \oR\HHom_{O(X_0)}( (\Tot \sigma^{\le N}\bar{\Omega})^{\ten r}, M)   \simeq  \oR\HHom_{O(X_0)}(L^{\ten r}, M),
\]
 which follows because $L \simeq \Tot \sigma^{\le N}\bar{\Omega}$ by Proposition \ref{tgtcor1}. (We could also take the limit over all $N$ to give $ L \simeq \Tot^{\Pi}\bar{\Omega}$.)

For  general $M$, we have 
\[
\cHom_{D^*O(X)}((\Omega^1_{D^*O(X)/R})^{\ten r}, M) =\Lim_i \cHom_{D^*O(X)}((\Omega^1_{D^*O(X)/R})^{\ten r}, \sigma^{\le i} M),
\]
and the spectral sequence for the cochain brutal truncation filtration for $M$ combines with boundedness of $\bar{\Omega}^{\ten r} $ to show that the right-hand complex is bounded below in cochain degrees, allowing us to replace $\hat{\HHom}$ with $\Tot^{\Pi}\sigma^{\ge -Nr}\cHom$. The spectral sequence therefore satisfies complete convergence, and the general quasi-isomorphism follows from the quasi-isomorphism above for the graded pieces $M^i[-i]$.
\end{proof}

%

\subsection{Poisson and symplectic structures}\label{bipoisssn}

To allow for some flexibility in computations and resolutions, we will not just consider stacky CDGAs of the form $D^*O(X)$ for DG Artin $N$-hypergroupoids $X$, but will fix  $A \in DG^+dg\CAlg(R)$ satisfying:
\begin{enumerate}
 \item for any cofibrant replacement $\tilde{A}\to A$ in the model structure of Lemma \ref{bicdgamodel}, the morphism $\Omega^1_{\tilde{A}/R}\to \Omega^1_{A/R}$ is a levelwise quasi-isomorphism,
\item  the $A^{\#}$-module $(\Omega^1_{A/R})^{\#}$ in   graded chain complexes is cofibrant (i.e. it has the left lifting property with respect to all surjections of $A^{\#}$-modules in graded chain complexes),
\item there exists $N$ for which the chain complexes $(\Omega^1_{A/R}\ten_AA^0)^i $ are acyclic for all $i >N$.
\end{enumerate}

Observe that these conditions are satisfied by $D^*O(X)$, by the results above, but they are also satisfied by more general stacky CDGAs. The first two conditions do not require $A$ to be cofibrant in the model structure of Lemma \ref{bicdgamodel}; it is enough for $A^{\#}$ to be cofibrant as a graded chain CDGA over $R$, or for $A \in DG^+dg_+\CAlg(R)$ with  $A^{\#}_{\#}$  ind-smooth as a bigraded commutative algebra over $R_{\#}$.

\subsubsection{Polyvectors}

All the definitions and properties of \S \ref{affinesn}   now carry over. In particular: 

\begin{definition}\label{bipoldef}
Given a stacky CDGA $A$ over $R$ as above, define the complex of $n$-shifted polyvector fields  on $A$ by
\[
 \widehat{\Pol}(A,n):= \prod_{j \ge 0} \hat{\HHom}_A(\CoS_A^j((\Omega^1_{A/R})_{[-n-1]}),A). 
\]
This has a filtration  by complexes
\[
F^p\widehat{\Pol}(A,n):= \prod_{j \ge p} \hat{\HHom}_A(\CoS_A^j((\Omega^1_{A/R})_{[-n-1]}),A),
\]
with $[F^i,F^j] \subset F^{i+j-1}$ and $F^i F^j \subset F^{i+j}$, where the commutative product and Schouten--Nijenhuis bracket are defined as before.
\end{definition}

We now define the space $\cP(A,n)$ of Poisson structures and its tangent space $T\cP(A,n)$ by the formulae of Definitions \ref{poissdef}, \ref{Tpoissdef}. As in Definition \ref{sigmadef}, there is a  canonical tangent vector $\sigma \co \cP(A,n) \to  T\cP(A,n)$. 

As well as composition for internal $\cHom$'s, we have maps $\cHom(M_1,P_1)\ten \cHom(M_2, P_2) \to \cHom(M_1\ten M_2, P_1\ten P_2)$, and hence
\begin{align*}
  \cHom_A(\Omega_A^1, \cHom_A(M, A))^{\ten p} &\to \cHom_A((\Omega_A^1)^{\ten p}, \cHom_A(M, A)^{\ten p})\\
& \to \cHom_A((\Omega_A^1)^{\ten p}, \cHom_A((M^{\ten p}, A)).
\end{align*}

Substituting $M= \bigoplus_{j=2}^{r-1} \CoS_A^j((\Omega^1_{A/R})_{[-n-1]})$, 
taking shifts and  $S_p$-coinvariants, and applying $\hat{\Tot}$, we see that an element
\[
 \pi \in F^2\widehat{\Pol}(A,n)^{n+2}/F^r   
\]
thus defines a contraction morphism
\[
\mu(-, \pi) \co \Tot^{\Pi} \Omega_A^p \to F^p\widehat{\Pol}(A,n)^{n+2}/F^{p(r-1)}
\]
of bigraded vector spaces, noting that $\Tot^{\Pi} \Omega_A^p = \hat{\Tot}\Omega_A^p $.
When $r=3$ and $p=1$, we simply denote this morphism  by $\pi_2^{\sharp}\co \Tot^{\Pi} \Omega_A^1 \to \hat{\HHom}_A(\Omega^1_A, A)[-n]$.

\begin{definition}\label{binondegdef}
We say that a Poisson structure $\pi \in  \cP(A,n)/F^p$ is non-degenerate if $\Tot^{\Pi} (\Omega_{A/R}^1\ten_AA^0)$ is a perfect complex over $A^0$ and the map
\[
 \pi_2^{\sharp}\co \Tot^{\Pi} (\Omega_{A/R}^1\ten_AA^0) \to \hat{\HHom}_A(\Omega^1_A, A^0)[-n]
\]
is a quasi-isomorphism.  
\end{definition}

\begin{lemma}\label{binondeglemma}
 If $\pi_2 \in  \cP(A,n)/F^3$ is non-degenerate, then the maps
\[
 \mu(-, \pi_2) \co \Tot^{\Pi}(\Omega_A^p\ten_AM) \to \hat{\HHom}_A(\CoS_A^p((\Omega^1_{A/R})_{[-n-1]}),M)
\]
are  quasi-isomorphisms for all $M \in DG^+dg\Mod(A)$.
\end{lemma}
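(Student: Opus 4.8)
The plan is to mirror the proof of Proposition \ref{tgtcor2}: first reduce in the coefficient module $M$ using the cochain brutal truncation filtration, and then exploit the perfectness supplied by non-degeneracy to make everything bounded and dualizable over $A^0$.

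First I would fix $p$ and note that, by multiplicativity, $\mu(-,\pi_2)$ is a natural transformation between the two functors $M \mapsto \Tot^{\Pi}(\Omega^p_A \ten_A M)$ and $M \mapsto \hat{\HHom}_A(\CoS_A^p((\Omega^1_{A/R})_{[-n-1]}),M)$, both of which carry levelwise quasi-isomorphisms to quasi-isomorphisms. Filtering $M$ by the brutal cochain truncations $\{\sigma^{\ge m}M\}_m$ induces filtrations on both sides whose associated gradeds correspond to $M = M^i[-i]$ with $M^i$ an $A^0$-module. Since $M \in DG^+dg\Mod(A)$ is bounded below in the cochain direction and both sides are complete for the induced filtration (this is exactly the point of using $\hat{\Tot}$ and $\hat{\HHom}$, noting $\Tot^{\Pi}\Omega^p_A = \hat{\Tot}\Omega^p_A$), the resulting spectral sequences converge, so it suffices to treat the case where $M$ is an $A^0$-module.

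For $M$ an $A^0$-module I would compute everything over $A^0$, writing $\bar{\Omega} := \Omega^1_A \ten_A A^0$, so that $\Omega^p_A \ten_A M \cong (\Lambda^p_{A^0}\bar{\Omega}) \ten_{A^0} M$. Non-degeneracy (Definition \ref{binondegdef}) says $\Tot^{\Pi}\bar{\Omega}$ is perfect over $A^0$, hence (as in Proposition \ref{tgtcor1}) levelwise quasi-isomorphic to its brutal cotruncation $\sigma^{\le N}\bar{\Omega}$, concentrated in cochain degrees $[0,N]$. I may therefore replace $\Lambda^p_{A^0}\bar{\Omega}$ by a complex in cochain degrees $[0,Np]$; the relevant $\cHom$ bicomplex is then bounded, so $\hat{\Tot}$ agrees with $\Tot^{\Pi}$ and the target computes the genuine derived dual of $\Lambda^p_{A^0}\bar{\Omega}$ over $A^0$, tensored with $M$.

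Finally, I would identify the map in this bounded setting. Since $\mu(-,\pi_2)$ is a morphism of graded-commutative $A$-algebras, on $\Omega^p_A = \Lambda^p_A\Omega^1_A$ it is the $p$-fold product of the $p=1$ contraction, i.e. the $p$-th symmetric power of $\pi_2^{\sharp}$ under the décalage identifying $\Lambda^p_A\Omega^1_A$ with $\CoS_A^p((\Omega^1_{A/R})_{[-n-1]})$ up to shift, base-changed along $A \to A^0$ and tensored with $M$. By hypothesis $\pi_2^{\sharp}$ is a quasi-isomorphism of perfect $A^0$-complexes; working over $\Q$, the $\Sigma_p$-coinvariants are a direct summand of the tensor power, so $\CoS^p(\pi_2^{\sharp})$ is again a quasi-isomorphism, and tensoring with $M$ over $A^0$ preserves this. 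The $p=1$ case is the hypothesis itself, and the general $p$ case follows identically. I expect the main obstacle to be the convergence bookkeeping in the first step — ensuring that the cochain filtration spectral sequence genuinely converges and commutes with the completed $\Tot^{\Pi}$ and $\hat{\HHom}$; the perfectness hypothesis is precisely what rescues this, since it permits replacing $\bar{\Omega}$ by a bounded cotruncation, after which $\hat{\Tot}$ collapses to $\Tot^{\Pi}$ and every dualization becomes honest.
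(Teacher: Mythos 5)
Your proposal is correct and follows essentially the same route as the paper's proof: reduce via the brutal cochain truncation filtration on $M$ to the case $M \in dg\Mod(A^0)$, replace $\bar{\Omega}=\Omega^1_{A/R}\ten_A A^0$ by its cotruncation $\sigma^{\le N}\bar{\Omega}$ so that $\hat{\Tot}$ collapses to $\Tot^{\Pi}$, and conclude from non-degeneracy of $\pi_2^{\sharp}$ by taking tensor powers over $A^0$ and $S_p$-coinvariants (harmless over $\Q$). One small misattribution: the levelwise acyclicity of $(\Omega^1_{A/R}\ten_A A^0)^i$ for $i>N$ is the standing hypothesis (3) of \S\ref{bipoisssn} on $A$, not a consequence of perfectness of $\Tot^{\Pi}\bar{\Omega}$ (a perfect product-total complex need not have eventually acyclic columns), but since this is assumed throughout the section your argument is unaffected.
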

\begin{proof}
We may proceed as in Proposition \ref{tgtcor2}. By hypothesis, $ (\Omega_{A/R}^1\ten_AA^0)^i$ is acyclic for all $i>N$, so $ \cHom_A(\CoS_A^p((\Omega^1_{A/R})_{[-n-1]}),A)^i$ is acyclic for all $i<-Np$, meaning that we may replace $\hat{\HHom}$ with $\Tot^{\Pi}\cHom$. Since $M= \Lim_i (M/\sigma^{\ge i}M)$, a spectral sequence argument allows us to reduce to the case where $M \in dg\Mod(A^0)$. 

We may also replace $ (\Omega_{A/R}^1\ten_AA^0)$ with its brutal truncation $ \sigma^{\le N}(\Omega_{A/R}^1\ten_AA^0)$, so all complexes are bounded in the cochain direction,  and then observe that non-degeneracy of $\pi$ gives us  quasi-isomorphisms
\[
  (\Tot \sigma^{\le N}(\Omega_{A/R}^1\ten_AA^0))^{\ten_{(A^0)}^p}\ten_{A^0}M \to \HHom_{A^0}( \sigma^{\le N}(\Omega_{A/R}^1\ten_AA^0)^{\ten_{(A^0)}^p}, M)[-np];
\]
the required result follows on taking $S_p$-coinvariants.
\end{proof}

\subsubsection{The de Rham complex}\label{biprespsn}

\begin{definition}\label{biDRdef}
Define the de Rham complex $\DR(A/R)$ to be the product total complex of the double cochain complex
\[
 \Tot^{\Pi} A \xra{d} \Tot^{\Pi}\Omega^1_{A/R} \xra{d} \Tot^{\Pi}\Omega^2_{A/R}\xra{d} \ldots,
\]
so the total differential is $d \pm \delta \pm \pd$.

We define the Hodge filtration $F$ on  $\DR(A/R)$ by setting $F^p\DR(A/R) \subset \DR(A/R)$ to consist of terms $\Tot^{\Pi}\Omega^i_{A/R}$ with $i \ge p$.
\end{definition}

The definitions of shifted symplectic structures from \S \ref{prespsn} now carry over:

\begin{definition}\label{biPreSpdef}
 Define the space $\PreSp(A,n)$ of $n$-shifted pre-symplectic structures on $A/R$ by  writing $\PreSp(A,n)/F^{i+2}:= \mmc(F^2  \DR(A/R)[n+1]/F^{i+2})$, and setting $ \PreSp(A,n):= \Lim_i \PreSp(A,n)/F^{i+2}$.

Say that an $n$-shifted pre-symplectic structure $\omega$ is symplectic if 
 $\Tot^{\Pi} (\Omega_{A/R}^1\ten_AA^0)$ is a perfect complex over $A^0$ and the map
\[
\omega_2^{\sharp}\co \hat{\HHom}_A(\Omega^1_A, A^0)[-n]\to  \Tot^{\Pi} (\Omega_{A/R}^1\ten_AA^0) 
\]
is a quasi-isomorphism.  
Let $\Sp(A,n) \subset \PreSp(A,n)$  consist of the symplectic structures --- this is a union of path-components.
\end{definition}

\begin{lemma}
 For a DG Artin $N$-hypergroupoid $X$, the space of closed $p$-forms of degree $n$ from \cite{PTVV} is given by
\[
 \cA^{p,cl}_R(X^{\sharp},n)\simeq\ho\Lim_{j\in\Delta}\mmc(F^p  \DR(D^*O(X^{\Delta^j})/R)^{[n+p-1]}).
\]
\end{lemma}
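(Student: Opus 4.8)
The plan is to recognise the left-hand side as the value on $\fX = X^{\sharp}$ of the PTVV hypersheaf of closed $p$-forms, and then to compute that value through the stacky resolution of Corollary \ref{gooddescent}. First I would recall that in \cite{PTVV} the assignment $\Spec B \mapsto \cA^{p,cl}_R(\Spec B,n)$ is an \'etale hypersheaf on derived affines, and that on a cofibrant $R$-CDGA $B$ it is given by the Dold--Kan presentation
\[
 \cA^{p,cl}_R(\Spec B,n) \simeq \mmc(F^p\DR(B/R)^{[n+p-1]}),
\]
which for $p=2$ is exactly the pre-symplectic space of Definition \ref{PreSpdef} and for general $p$ matches the shift convention there (a closed $p$-form of degree $n$ being an element of $\z^{n+p}F^p\DR$). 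Since $\fX \simeq X^{\sharp}$ is the \'etale hypersheafification of the simplicial derived affine $X$, hyperdescent then gives $\cA^{p,cl}_R(\fX,n)\simeq \map(X,\cA^{p,cl}_R(-,n))$.

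Next I would feed the presheaf $F=\cA^{p,cl}_R(-,n)$ into Corollary \ref{gooddescent}, obtaining
\[
 \cA^{p,cl}_R(\fX,n)\simeq \ho\Lim_{j\in\Delta}\map(\Spec DD^*O(X^{\Delta^j}),F).
\]
Unwinding each mapping space against the simplicial derived affine $i\mapsto \Spec D^iD^*O(X^{\Delta^j})$, and applying the affine formula above levelwise to the ordinary CDGAs $D^iD^*O(X^{\Delta^j})$, this becomes $\ho\Lim_{j}\ho\Lim_{i}\mmc(F^p\DR(D^iD^*O(X^{\Delta^j})/R)^{[n+p-1]})$. Comparing with the asserted right-hand side, it then remains to prove, for the stacky CDGA $B:=D^*O(X^{\Delta^j})$, the single descent equivalence
\[
 \ho\Lim_{i\in\Delta}\mmc(F^p\DR(D^iB/R)^{[n+p-1]})\simeq \mmc(F^p\DR(B/R)^{[n+p-1]}).
\]

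For this last step I would argue through the Hodge filtration tower. Since $F^p\DR$ carries the trivial bracket, $\mmc$ is the denormalisation of a good truncation and so commutes with the homotopy limit over $\Delta$; using the long exact sequences of Proposition \ref{DRobs} (and its stacky analogue) on both sides, it suffices to match the graded pieces, i.e.\ to show
\[
 \ho\Lim_{i\in\Delta}\Tot\,\Omega^p_{D^iB/R}\simeq \Tot^{\Pi}\Omega^p_{B/R}
\]
compatibly with the de Rham differential $d$. This is exactly where Propositions \ref{tgtcor1} and \ref{tgtcor2} do the work: each $X^{\Delta^j}$ is again a DG Artin $N$-hypergroupoid, so those results identify $\hat{\HHom}_B$ of tensor powers of $\Omega^1_{B/R}$ with $\oR\HHom_{O(X^{\Delta^j})}$ of the corresponding powers of the cotangent complex, hence (after taking $S_p$-coinvariants) with the derived global sections of $\Omega^p$. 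Conormalisation $N_c$ turns the cosimplicial $\ho\Lim_i$ into the cochain-direction total complex of the chain cochain complex $\Omega^p_{B/R}$, so the two descriptions agree degreewise, and $d$ is carried across because $N_c$ is lax monoidal for the shuffle product.

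The main obstacle will be this final reconciliation of the two total-complex conventions. One must check that the $\ho\Lim$ over $\Delta$, which via $N_c$ produces the completed total complex $\hat{\Tot}$ used in $\hat{\HHom}$, is compatible with the $\Tot^{\Pi}$ appearing in Definition \ref{biDRdef}; here the good truncation $\sigma^{\ge -N}$ and the acyclicity of $(\Omega^1_{B/R}\ten_BB^0)^i$ for $i>N$ (the standing boundedness condition on $B$) are precisely what force $\hat{\Tot}$ and $\Tot^{\Pi}$ to agree on each fixed form-degree $p$, exactly as in the passage between the two displays of Proposition \ref{tgtcor1}. Keeping the Hodge filtration and its completion $F^p\DR=\Lim_i(\,\cdot\,/F^{i})$ compatible with both the homotopy limit over $\Delta$ and the functor $\mmc$ is the delicate bookkeeping that makes the argument go through.
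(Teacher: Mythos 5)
Your argument is essentially the paper's own proof, which is stated in a single line as ``combine Corollary \ref{gooddescent} and Proposition \ref{tgtcor2}'': you use hyperdescent for the PTVV functor together with Corollary \ref{gooddescent} to pass to the stacky resolution $D^*O(X^{\Delta^{\bt}})$, and then the technology of Propositions \ref{tgtcor1}--\ref{tgtcor2} (including the $\hat{\Tot}$ versus $\Tot^{\Pi}$ reconciliation via the boundedness condition on $(\Omega^1_{B/R}\ten_B B^0)^i$) to identify the graded pieces of the Hodge tower, exactly as intended. The one imprecise phrase is where you pass from $\oR\HHom$ out of tensor powers of the cotangent complex to ``derived global sections of $\Omega^p$'' --- such Homs are polyvector-type duals, so what is really needed is the forms analogue, e.g.\ Proposition \ref{tgtcor2} with $r=0$ and $M=\Omega^p_{B/R}$ combined with the isomorphism $N_c(\Omega^1_{O(X)/R}\ten_{O(X)}DD^*O(X))\cong\Omega^1_{D^*O(X)/R}$ and its wedge powers --- but this follows from precisely the truncation argument you invoke, so the gap is one of wording rather than substance.
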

\begin{proof}
 This follows by combining Corollary \ref{gooddescent} and Proposition \ref{tgtcor2}. 
\end{proof}

\subsubsection{Compatible pairs}\label{Artincompat}

The proof of Lemma \ref{mulemma} adapts to give  maps
\[
 (\pr_2 + \mu\eps) \co  \PreSp(A,n)/F^{p} \by \cP(A,n)/F^{p} \to T\cP(A,n)/F^{p}
\]
 over $\cP(A,n)/F^{p}$ for all $p$, compatible with each other.

We may now define the space $\Comp(A, n)$ of compatible pairs as in Definition \ref{compdef}, with the results of \S \ref{affinesn} all adapting to show that the maps     $\Sp(A,n) \la  \Comp(A,n)^{\nondeg} \to  \cP(A,n)^{\nondeg}$ are weak equivalences. 

\subsection{Diagrams and functoriality}\label{Artindiagramsn}

We now extend the constructions of \S \ref{DMdiagramsn} to stacky CDGAs.

\subsubsection{Definitions}

Given a small category $I$, an $I$-diagram $A$ of stacky CDGAs over $R$, and   $A$-modules $M,N$ in $I$-diagrams of chain cochain complexes, we can define the  cochain complex $\hat{\HHom}_A(M,N)$ to be the equaliser of the obvious diagram
\[
\prod_{i\in I} \hat{\HHom}_{A(i)}(M(i),N(i)) \implies \prod_{f\co i \to j \text{ in } I}   \hat{\HHom}_{A(i)}(M(i),f_*N(j)).
\]
All the constructions of \S \ref{bipoisssn} then adapt immediately; in particular, we can define 
\[
 \widehat{\Pol}(A,n):= \prod_{j \ge 0} \hat{\HHom}_A(\CoS_A^j((\Omega^1_{A/R})_{[-n-1]}),A),
\]
leading to a space $\cP(A,n)$ of Poisson structures.

We can now adapt the formulae of \S \ref{DMdiagramsn} to this setting, defining  pre-symplectic structures by 
\[
 \PreSp(A,n):=  \PreSp(A(0),n)= \Lim_{i\in [m]}  \PreSp(A(i),n)
\]
for any $[m]$-diagram $A$ of stacky CDGAs over $R$, and setting $ \Comp(A,n)$ be the homotopy vanishing locus of the obvious maps
\[
(\mu - \sigma) \co  \PreSp(A,-1) \by \cP(A,n) \to  T\cP(A,n).
\]
over $\cP(A,n)$.

The obstruction functors and their towers from \S \ref{towersn} also adapt immediately, giving the obvious analogues of the obstruction spaces defined in terms of 
\[
\hat{\HHom}_A(\CoS^p_A(\Omega^1_{A/R}[n+1]),A), \quad \Tot^{\Pi}\Omega^p_{A(0)}.
\]

\subsubsection{Functors and descent}\label{bidescentsn}


For $[m]$-diagrams in $DG^+dg\CAlg(R)$, we will consider the injective model structure, so an $[m]$-diagram $A$ is cofibrant if each $A(i)$ is cofibrant for the model structure of Lemma \ref{bicdgamodel}, and is fibrant if the maps $A(i) \to A(i+1)$ are all surjective.

\begin{lemma}\label{bicalcTlemma2}
If $D=(A\to B)$ is a fibrant cofibrant  $[1]$-diagram in $DG^+dg\CAlg(R)$ which is formally \'etale in the sense that the map
\[
 \{\Tot \sigma^{\le q} (\Omega_{A}^1\ten_{A}B^0)\}_q \to \{\Tot \sigma^{\le q}(\Omega_{B}^1\ten_BB^0)\}_q
\]
is a pro-quasi-isomorphism, then the map    
\[
 \hat{\HHom}_D(\CoS_D^k\Omega^1_D,D) \to \hat{\HHom}_{A}(\CoS_{A}^k\Omega^1_{A},A),
\]
is a quasi-isomorphism for all $k$.
\end{lemma}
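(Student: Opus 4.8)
The plan is to mimic the proof of Lemma \ref{calcTlemma2}, reducing the statement to a single \'etale quasi-isomorphism and then dealing with the completion $\hat\Tot$ by the brutal-truncation device already used in Proposition \ref{tgtcor2} and Lemma \ref{binondeglemma}. First I would unravel the definition of $\hat\HHom_D$ for the $[1]$-diagram $D=(A\to B)$. Writing $C_A:=\CoS_A^k\Omega^1_A$ and $C_B:=\CoS_B^k\Omega^1_B$, the equaliser defining $\hat\HHom_D(\CoS_D^k\Omega^1_D,D)$ is precisely the strict fibre product
\[
\hat\HHom_A(C_A,A)\times_{\hat\HHom_A(C_A,B)}\hat\HHom_B(C_B,B),
\]
where the right-hand leg combines the restriction of the $B$-module $B$ along $A\to B$ with the module map $C_A\to C_B$, and the left-hand leg is induced by $A\to B$ on coefficients.

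Next I would promote this strict fibre product to a homotopy fibre product. Since $D$ is fibrant, $A\to B$ is surjective, and since $(\Omega^1_{A/R})^{\#}$ is cofibrant (condition (2) of \S\ref{bipoisssn}) so is $C_A=\CoS^k_A\Omega^1_A$; hence $\cHom_A(C_A,A)\to\cHom_A(C_A,B)$ is surjective in every bidegree, and applying $\hat\Tot$ preserves degreewise surjectivity. Thus the left-hand leg is a fibration, the strict fibre product computes the homotopy fibre product, and it suffices to show that the remaining leg
\[
\hat\HHom_B(C_B,B)\longrightarrow\hat\HHom_A(C_A,B)
\]
is a quasi-isomorphism; the projection to $\hat\HHom_A(C_A,A)$ is then one as well.

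For this leg I would use the extension-of-scalars identification $\cHom_A(C_A,B)\cong\cHom_B(C_A\ten_AB,B)$, valid because $B$ is a $B$-module, which rewrites the displayed map as $\hat\HHom_B(-,B)$ applied to the comparison $C_A\ten_AB\to C_B$. The main obstacle is that $\hat\HHom_B(-,B)=\hat\Tot\,\cHom_B(-,B)$ involves a completion and so does \emph{not} send arbitrary quasi-isomorphisms to quasi-isomorphisms. Here I would invoke condition (3) of \S\ref{bipoisssn}, giving a uniform bound $N$ above which $(\Omega^1_{A/R}\ten_AA^0)^i$, and hence $(\Omega^1_{A/R}\ten_AB^0)^i$ and the corresponding pieces for $B$, are acyclic. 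Exactly as in Lemma \ref{binondeglemma} and Proposition \ref{tgtcor2}, this lets me replace each $C$ by its brutal cotruncation $\sigma^{\le Nk}$, so the relevant $\cHom$-bicomplexes are bounded and $\hat\Tot$ agrees with $\Tot^{\Pi}$.

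Finally, a spectral-sequence argument for the cochain brutal-truncation filtration of the coefficient $B$ reduces the claim to the case where the coefficient is a $B^0$-module $B^i[-i]$, where the comparison becomes $C_A\ten_AB^0\to C_B\otimes_B B^0$. Taking $k$-th symmetric powers of cofibrant modules preserves quasi-isomorphisms, so this in turn reduces to $\Omega^1_{A/R}\ten_AB^0\to\Omega^1_{B/R}\ten_BB^0$; once the uniform bound $N$ truncates the pro-system, the \'etale hypothesis that $\{\Tot\sigma^{\le q}(\Omega^1_{A}\ten_AB^0)\}_q\to\{\Tot\sigma^{\le q}(\Omega^1_{B}\ten_BB^0)\}_q$ is a pro-quasi-isomorphism becomes a genuine quasi-isomorphism on the bounded truncations. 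Applying the (now boundedness-harmless) functor $\Tot^{\Pi}\cHom_{B^0}(-,M)$ and taking $S_k$-coinvariants then yields the required quasi-isomorphism on $\hat\HHom_B(-,B)$, completing the argument. I expect the truncation/pro-system bookkeeping of this last paragraph to be the delicate point, precisely because it is what distinguishes the stacky case from the plain $\HHom$ of Lemma \ref{calcTlemma2}.
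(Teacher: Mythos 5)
Your proposal is correct and follows essentially the paper's own route: the paper proves Lemma \ref{bicalcTlemma2} simply by ``reasoning as in the proof of Proposition \ref{tgtcor2}'', i.e.\ the same reduction via the brutal truncation filtration on coefficients, replacement of $\hat{\HHom}$ by $\Tot^{\Pi}\cHom$ using the cochain-degree bound $N$, and passage to bounded truncations where the pro-quasi-isomorphism hypothesis becomes a genuine one. Your preliminary unravelling of the equaliser as a strict (hence, by fibrancy and cofibrancy, homotopy) fibre product is exactly the diagram bookkeeping implicit in the paper's appeal to the affine analogue (Lemmas \ref{calcTOmegalemma} and \ref{calcTlemma2}), so you have merely spelled out details the paper leaves to the reader.
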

\begin{proof}
 This follows by reasoning as in the proof of Proposition \ref{tgtcor2}.
\end{proof}

\begin{definition}
 Write  $DG^+dg\CAlg(R)_{c, \onto}\subset DG^+dg\CAlg(R) $ for the subcategory with all cofibrant stacky CDGAs (in the model structure of Lemma \ref{bicdgamodel}) over $R$ as objects, and only surjections as morphisms.
\end{definition}

For the notion of being formally \'etale from Lemma \ref{bicalcTlemma2}, we may extend the conditions of Properties \ref{Fproperties} to constructions on $DG^+dg\CAlg(R)_{c, \onto}$, with quasi-isomorphisms taken levelwise. The constructions  $\cP(-,n)$, $\Comp(-,n)$, $\PreSp(-,n)$,  and their associated filtered and graded functors, all satisfy these properties. The first two properties follow from the right lifting property for  fibrations (in the injective model structure on diagrams), and the third from Lemma \ref{bicalcTlemma2}.

Thus the simplicial classes $\coprod_{ A \in B_m DG^+dg\CAlg(R)_{c, \onto}} F(A)$ fit together to give a complete Segal space $\int F$ over the nerve $BDG^+dg\CAlg(R)_{c, \onto} $. 

Definition \ref{LintFdef} then adapts to give us an $\infty$-category $\oL\int F$,  and Definition \ref{inftyFdef} adapts to give
an $\infty$-functor
\[
 \oR F \co \oL DG^+dg\CAlg(R)^{\et} \to  \oL s\Set
\]
with $(\oR F)(A) \simeq F(A)$
for all cofibrant stacky CDGAs $A$ over $R$, where $DG^+dg\CAlg(R)^{\et} \subset DG^+dg\CAlg(R)$ is the $2$-sub-$\infty$-category of homotopy formally \'etale morphisms.

An immediate consequence of \S \ref{Artincompat} is that the canonical maps
\begin{eqnarray*}
  \oR\Comp(-,n)^{\nondeg} &\to& \oR\Sp(-,n)  \\   
    \oR\Comp(-,n)^{\nondeg} &\to&  \oR\cP(-,n)^{\nondeg}           
\end{eqnarray*}
 are weak equivalences of $\infty$-functors on the full subcategory of $\oL DG^+dg\CAlg(R)^{\et}$ consisting of objects satisfying the conditions of \S \ref{bipoisssn}. 

Corollary \ref{gooddescent} and Proposition \ref{tgtcor2} ensure that if a morphism $X \to Y$ of DG Artin $N$-hypergroupoids becomes an equivalence on hypersheafifying, then $D^*O(Y) \to D^*O(X)$ is formally \'etale in the sense of Lemma \ref{bicalcTlemma2}. In particular this means that the maps $\pd^i \co D^*O(X^{\Delta^j}) \to D^*O(X^{\Delta^{j+1}})$ and $\sigma^i \co   D^*O(X^{\Delta^{j+1}})\to D^*O(X^{\Delta^j})$ are formally \'etale. Thus $D^*O(X^{\Delta^{\bt}})$ can be thought of as a DM hypergroupoid in stacky CDGAs, and we may make the following definition:

\begin{definition}\label{biinftyFXdef}
Given a DG Artin $N$-hypergroupoid $X$ over $R$ and   any of the functors $F$ above, write 
\[
 F(X):= \ho\Lim_{j \in \Delta} \oR F(D^* O(X^{\Delta^j})).
\]
\end{definition}

\begin{proposition}\label{biinftyFXwell}
If $Y \to X$ is a trivial DG Artin hypergroupoid, then the morphism 
\[
 F(X) \to F(Y)
\]
is an equivalence for any of the constructions $F= \cP, \Comp, \PreSp$.
\end{proposition}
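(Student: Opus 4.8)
The plan is to mirror the proof of Proposition \ref{inftyFXwell} in the DM case, replacing the appeal to faithfully flat descent of quasi-coherent sheaves with the combination of Corollary \ref{gooddescent} and Proposition \ref{tgtcor2}. First I would check that the comparison map $F(X) \to F(Y)$ exists at all: the morphism $Y \to X$ induces $O(X) \to O(Y)$ and hence $D^*O(X^{\Delta^j}) \to D^*O(Y^{\Delta^j})$ for each $j$, and since $Y^{\Delta^j} \to X^{\Delta^j}$ becomes an equivalence on hypersheafifying, these maps are formally \'etale in the sense of Lemma \ref{bicalcTlemma2} (by Corollary \ref{gooddescent} and Proposition \ref{tgtcor2}, exactly as recorded just before Definition \ref{biinftyFXdef}). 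Thus $\oR F$ is functorial along them, and the homotopy limits over $\Delta$ receive a well-defined map.

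Next I would invoke the obstruction towers. The stacky-CDGA analogues of Propositions \ref{DRobs} and \ref{compatobs}, together with the corresponding tower for $\cP$, all adapt as indicated in \S\ref{Artindiagramsn}, so it suffices to show the comparison map is a quasi-isomorphism on each graded piece. Concretely, this reduces the statement to proving that
\[
\ho\Lim_{j\in\Delta}\Tot^{\Pi}\Omega^p_{D^*O(X^{\Delta^j})/R}\quad\text{and}\quad \ho\Lim_{j\in\Delta}\hat{\HHom}_{D^*O(X^{\Delta^j})}(\CoS_A^p((\Omega^1_{D^*O(X^{\Delta^j})/R})_{[-n-1]}),D^*O(X^{\Delta^j}))
\]
are invariant, up to quasi-isomorphism, under $X \rightsquigarrow Y$ (the $\Comp$ tower requires both, since its graded pieces $M(\omega_2,\pi_2,p)$ are assembled from the two). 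Note that non-degeneracy plays no role here, as we are only comparing the full graded complexes.

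I would then identify these homotopy limits with intrinsic invariants of the stack $\fX:=X^{\sharp}$. By Proposition \ref{tgtcor2} the complex $\hat{\HHom}_{D^*O(X^{\Delta^j})}((\Omega^1_{D^*O(X^{\Delta^j})/R})^{\ten p},A)$ computes $\oR\HHom_{O(X^{\Delta^j})}((\bL^{X^{\Delta^j}/R})^{\ten p},-)$, and taking $S_p$-coinvariants as in the proof of Lemma \ref{binondeglemma} passes to the symmetric powers; likewise $\Tot^{\Pi}\Omega^p$ computes derived $p$-forms. Applying $\ho\Lim_j$ together with Corollary \ref{gooddescent} then descends these to $\oR\HHom_{\O_{\fX}}(\CoS^p((\bL_{\fX/R})_{[-n-1]}),\O_{\fX})$ and $\oR\Gamma(\fX,\oL\Omega^p_{\fX/R})$ respectively, precisely the reduction already used for closed forms in \S\ref{biprespsn}. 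Since $Y \to X$ is a trivial DG Artin $N$-hypergroupoid, Theorem \ref{dbigthm} gives $X^{\sharp}\simeq Y^{\sharp}$, so the intrinsic expressions agree, the graded comparison maps are quasi-isomorphisms, and the towers yield the desired equivalence $F(X)\to F(Y)$.

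The hard part will be the descent step: verifying that $\ho\Lim_j$ of the stacky $\hat{\HHom}$-polyvector complexes genuinely reduces, via Proposition \ref{tgtcor2} and Corollary \ref{gooddescent}, to derived global sections over $\fX$. In particular one must correctly handle the symmetric (rather than tensor) powers of $\bL_{\fX/R}$ and the completed total complex $\hat{\Tot}$, and confirm that the obstruction-theoretic reduction to graded pieces commutes with the homotopy limit over $\Delta$.
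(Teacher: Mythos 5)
Your proposal is correct and takes essentially the same route as the paper: the paper's proof of Proposition \ref{biinftyFXwell} is precisely that the argument of Proposition \ref{inftyFXwell} adapts, with Corollary \ref{gooddescent} and Proposition \ref{tgtcor2} standing in for the appeal to faithfully flat descent, which is exactly the reduction you carry out (existence of the map via formal \'etaleness, reduction to graded pieces via the adapted obstruction towers, then identification of the $\ho\Lim_j$ of the graded pieces with invariants of $\fX = X^{\sharp}\simeq Y^{\sharp}$). The delicacies you flag at the end --- passing from tensor to symmetric powers of $\bL_{\fX/R}$ via $S_p$-coinvariants in characteristic zero, handling $\hat{\Tot}$ through the boundedness hypotheses of \S \ref{bipoisssn}, and commuting the tower with the homotopy limit over $\Delta$ --- are indeed the steps the paper leaves implicit, and your treatment of them is the intended one.
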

\begin{proof}
The proof of Proposition \ref{inftyFXwell} adapts, replacing  Propositions \ref{DRobs} and \ref{compatobs} with Corollary \ref{gooddescent} and Proposition  \ref{tgtcor2}. 
 \end{proof}

Thus the following is well-defined:
\begin{definition}\label{PdefArtin}
 Given a strongly quasi-compact DG Artin $N$-stack $\fX$,  define the spaces  $\cP(\fX,n)$, $\Comp(\fX,n)$, $\Sp(\fX,n)$  to be  the spaces
$
 \cP(X,n), \Comp(X,n), \Sp(X,n)
$
for any DG Artin $N$-hypergroupoid $X$ with $X^{\sharp} \simeq \fX$.
\end{definition}

\begin{examples}\label{2PBG}
If $R=\H_0R$, with $Y/R$ a smooth affine scheme  equipped with an action of a Lie algebra $\g$,  we start by considering $2$-shifted Poisson structures on the  the Chevalley--Eilenberg complex $O([Y/\g])$ of Example \ref{DstarBG} (a cochain CDGA). In this case, the DGLA $F^i\widehat{\Pol}( [Y/\g],2)[3]$ is concentrated in cochain degrees $[2i-3, \infty)$. In particular, this means that $\cP( [Y/\g],2) \cong \z^4(\gr_F^2\widehat{\Pol}( [Y/\g],2))$ is a discrete space (so all homotopy groups are trivial). Explicitly, 
\[
 \cP( [Y/\g],2) \cong \{ \pi \in (S^2\g \ten O(Y))^{\g} ~:~ [\pi, a]=0 \in \g \ten O(Y) ~\forall a \in O(Y)\}.
\]
Specialising to the case $Y=\Spec R$, we have  $\cP( B\g,2) \cong (S^2\g)^{\g}$, the set of quadratic Casimir elements. 

If $\g$ is the Lie algebra of a linear algebraic group $G$ as in Example \ref{DstarBG}, then for $X:=B[*/G]$,   the spaces $\cP(D^*O(X^{\Delta^j}),2)$  
are all discrete sets, so the space $\cP(BG,2)$ is just the equaliser of the maps $\cP( B\g,2) \implies \cP([G/\g^{\oplus 2}],2)  $ coming from the   vertex maps  
of the 
cosimplicial CDGA $D^*O(X^{\Delta^{\bt}})$  
given by
\[
 \xymatrix@1{ O(B\g) \ar@<1ex>[r] \ar@<-1ex>[r] & \ar@{.>}[l] O([G/\g^{\oplus 2}]) \ar[r] \ar@/^/@<0.5ex>[r] \ar@/_/@<-0.5ex>[r] & \ar@{.>}@<0.75ex>[l] \ar@{.>}@<-0.75ex>[l]   
O([G^2/\g^{\oplus 3}]) \ar@/^1pc/[rr] \ar@/_1pc/[rr] \ar@{}[rr]|{\cdot} \ar@{}@<1ex>[rr]|{\cdot} \ar@{}@<-1ex>[rr]|{\cdot} && O([G^3/\g^{\oplus 4}]){}   \ar@/^1.2pc/[rr] \ar@/_1.2pc/[rr]\ar@{}[rr]|{\cdot} \ar@{}@<1ex>[rr]|{\cdot} \ar@{}@<-1ex>[rr]|{\cdot}& & {\phantom{E}}\cdots .} 
\]
\vskip 1ex
These \'etale vertex maps are induced by applying $S^2$ to the maps
$
 \g \implies \g^{\oplus 2}\ten O(G)
$
which when evaluated on $g \in G$ send $v \in \g$ to $(v, gvg^{-1})$ and $(gvg^{-1},v)$ respectively. The equaliser $\cP(BG,2)\subset (S^2\g)^{\g}$  is thus 
\[
 \cP(BG,2) \cong (S^2\g)^G,
\]
with each element
corresponding to a cosimplicial $P_3$-algebra structure on  $D^*O((B[*/G])^{\Delta^{\bt}})$.
 Taking $G$  reductive and restricting to path components recovers an example in \cite[\S 3.1]{CPTVV}.
\end{examples}

\begin{remark}\label{cfCPTVV} To relate Definition \ref{PdefArtin} with the Poisson structures of \cite{CPTVV}, first note that reindexation gives an equivalence of categories between double complexes and
the ``graded mixed complexes'' of \cite{PTVV,CPTVV} (but beware \cite[Remark 1.1.2]{CPTVV}: in the latter paper,  ``graded mixed complexes''  do not have mixed differentials). Thus ``graded mixed cdgas''  are just stacky CDGAs, and to a derived stack $\fX$, \cite[Definition 4.2.11]{CPTVV} associates a sheaf $\bD_{\fX/\fX_{\dR}}$ of stacky CDGAs  on the de Rham stack $\fX_{\dR}$, defining Poisson structures in terms of polyvectors on $\bD_{\fX/\fX_{\dR}}$. A comparison with our definition should then involve the observation that  $\Spec DD^*O(X^{\Delta^j})$ is a model for the relative de Rham stack $(X_j/\fX)_{\dR}= (X_j)_{\dR}\by^h_{\fX_{\dR}}\fX$, possibly with $D^*O(X^{\Delta^j}) \simeq \bD_{X_j/\fX}$.
\end{remark}

Combined with  the results   above, an immediate consequence of \S \ref{Artincompat} is: 
  \begin{theorem}\label{Artinthm}
For any strongly quasi-compact DG Artin $N$-stack $\fX$ over $R$, there are natural weak equivalences
\[
 \Sp(\fX,n) \la \Comp(\fX,n)^{\nondeg}\to \cP(\fX,n)^{\nondeg}.
\]
\end{theorem}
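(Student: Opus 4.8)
The plan is to reduce the theorem, by means of the hypergroupoid presentation, to the levelwise comparison already obtained for stacky CDGAs. First I would fix a DG Artin $N$-hypergroupoid $X$ over $R$ with $X^{\sharp}\simeq\fX$, so that by Definition \ref{PdefArtin} the three spaces $\Sp(\fX,n)$, $\Comp(\fX,n)$, $\cP(\fX,n)$ are computed as the homotopy limits over $\Delta$ of the $\infty$-functors $\oR\Sp$, $\oR\Comp$, $\oR\cP$ evaluated on the cosimplicial stacky CDGA $j\mapsto D^*O(X^{\Delta^j})$; Proposition \ref{biinftyFXwell} guarantees that this is independent of the chosen resolution. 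The two maps in the statement are the canonical projection to $\Sp$ (via the analogue of Lemma \ref{compatnondeg}) and the canonical forgetful map to $\cP^{\nondeg}$, assembled out of the corresponding levelwise maps.

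The core input is the natural equivalence of $\infty$-functors recorded at the end of \S \ref{bidescentsn}: the maps $\oR\Comp(-,n)^{\nondeg}\to\oR\Sp(-,n)$ and $\oR\Comp(-,n)^{\nondeg}\to\oR\cP(-,n)^{\nondeg}$ are weak equivalences on the full subcategory of $\oL DG^+dg\CAlg(R)^{\et}$ cut out by the conditions of \S \ref{bipoisssn} (this in turn rests on \S \ref{Artincompat}, where Corollary \ref{compatcor2} is adapted to stacky CDGAs). Each $D^*O(X^{\Delta^j})$ satisfies those conditions by the results of \S \ref{Artintgtsn}, and the cosimplicial structure maps $\pd^i,\sigma^i$ are formally \'etale in the sense of Lemma \ref{bicalcTlemma2}, as noted in \S \ref{bidescentsn}. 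Hence both maps are \emph{levelwise} weak equivalences of cosimplicial diagrams of spaces.

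It then remains to pass to $\ho\Lim_{j\in\Delta}$. The one point requiring genuine care is that the non-degenerate locus be compatible with the cosimplicial structure. I would verify that, because the transition maps are formally \'etale and therefore induce quasi-isomorphisms on the relevant tangent and cotangent complexes (Proposition \ref{tgtcor2} together with Lemma \ref{bicalcTlemma2}), the map $\pi_2^{\sharp}$ of a compatible family is a quasi-isomorphism at cosimplicial level $0$ precisely when it is so at every level. Consequently $\Comp(\fX,n)^{\nondeg}$ and $\cP(\fX,n)^{\nondeg}$ are themselves the homotopy limits of the levelwise non-degenerate loci, and the theorem follows because $\ho\Lim$ preserves levelwise weak equivalences.

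Since the substantive work has already been carried out --- the affine comparison of Corollary \ref{compatcor2} as adapted in \S \ref{Artincompat}, and the \'etale functoriality of \S \ref{bidescentsn} --- this final assembly is essentially formal. I expect the only real obstacle to be the bookkeeping around non-degeneracy: one must check that non-degeneracy is a union-of-path-components condition detected on $\pi_0$ and stable under the \'etale transition maps, so that it is inherited correctly by the homotopy limit rather than having to be imposed separately at each cosimplicial level.
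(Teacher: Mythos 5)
Your proposal matches the paper's own argument: the paper deduces Theorem \ref{Artinthm} exactly as an assembly of the stacky-CDGA comparison of \S \ref{Artincompat} (the adaptation of Corollary \ref{compatcor2}), the \'etale $\infty$-functoriality of \S \ref{bidescentsn}, and the homotopy limit over $j\in\Delta$ in Definitions \ref{biinftyFXdef} and \ref{PdefArtin}, with Proposition \ref{biinftyFXwell} giving independence of the resolution. Your extra care about non-degeneracy being a union-of-path-components condition preserved by the formally \'etale transition maps is correct and is implicitly how the paper treats the superscript $\nondeg$, so your route is essentially identical.
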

Modulo the comparison suggested in Remark \ref{cfCPTVV}, an alternative proof of Theorem \ref{Artinthm} is given as \cite[Theorem 3.2.4]{CPTVV}.

\bibliographystyle{alphanum}
\bibliography{references.bib}

\end{document}